\documentclass[10pt]{amsart}

\usepackage{amsmath, amsfonts, amssymb, amsthm, mathrsfs}

\usepackage{tikz-cd}

\usepackage[colorlinks, urlcolor=black]{hyperref}
\hypersetup{citecolor=blue, linkcolor=blue}

\newtheorem{theorem}{Theorem}[section]
\newtheorem{definition}[theorem]{Definition}
\newtheorem{proposition}[theorem]{Proposition}
\newtheorem{conjecture}[theorem]{Conjecture}
\newtheorem{corollary}[theorem]{Corollary}
\newtheorem{lemma}[theorem]{Lemma}
\newtheorem{remark}[theorem]{Remark}

\numberwithin{equation}{section}
\newcommand{\tI}{{}^{\otimes}{\rm I}}
\makeatletter
\newcommand{\loccit}[1]{\@ifundefined{hyperlink}{[\emph{loc.\,cit.}]}{\hyperlink{cite.#1}{[\emph{loc.\,cit.}]}}}
\makeatother

\author{Luis Alberto Lomel\'i}
\address{Luis Alberto Lomel\'i \\ Instituto de Matem\'aticas \\ Pontificia Universidad Cat\'olica de Valpara\'iso \\  Blanco Viel 596 \\ Cerro Bar\'on \\ Valpara\'iso \\ Chile}
\email{luis.lomeli@pucv.cl}

\subjclass[2020]{Primary 11F70; Secondary 11F66, 22E50, 22E55}
\keywords{Langlands-Shahidi method, base change, unitary groups, function fields, Ramanujan Conjecture, Riemann Hypothesis}

\begin{document}

\title[Ramanujan and Riemann for the unitary groups]{Ramanujan Conjecture and Riemann Hypothesis \\ for the unitary groups over function fields}

\begin{abstract}
We establish generic local and global Langlands functoriality, namely stable base change, for the unitary groups in characteristic $p$. Main applications are the Ramanujan Conjecture and the Riemann Hypothesis for extended Langlands-Shahidi $L$-functions of the unitary groups over function fields.
\end{abstract}

\maketitle

\section*{Introduction}

We study global Langlands functoriality, or more precisely, we establish stable base change for globally generic representations of the quasi-split unitary groups ${\rm U}_N$ over function fields. We rely on a study of $L$-functions via the Langlands-Shahidi (LS) method in positive characteristic. For the local theory, there is a well known proof in algebraic number theory of the existence theorem of local class field theory from the global theory. Analogously, we here prove the existence of generic local base change via a global to local argument.

In the absence of an invariant stabilized trace formula in positive characteristic, our approach combines the LS method \cite{LomLS} with the converse theorem of Cogdell and Piatetski-Shapiro \cite{CoPS1994}. Globally, we build on prior work on the classical groups over function fields \cite{Lo2009}, and we guide ourselves with the work of Kim and Krishnamurthy for the unitary groups in the case of number fields \cite{KiKr2004, KiKr2005}.

We observe that over number fields, functoriality for globally generic representations of the classical groups was established by Cogdell, Kim, Piatetski-Shapiro and Shahidi \cite{CKPSS2004}. The work of Arthur on endoscopic classification establishes the general case for not necessarily generic representations in \cite{Ar2013}, followed by the work of Mok on the unitary groups \cite{Mo2015}.

Over function fields, we prove for the unitary groups an analogue of the Weil conjectures for globally generic automorphic $L$-functions. Via base change, we are able to transport two important problems from unitary groups to general linear groups. Indeed, by combining our results with those of L.~Lafforgue \cite{La2002}, we prove the Ramanujan Conjecture, in addition to the Riemann Hypothesis for extended Langlands-Shahidi automorphic $L$-functions for the unitary groups. More precisely, Theorem~\ref{mainthm} shows that these $L$-functions are rational, satisfy a functional equation, and have their zeros on the line $\Re(s) = 1/2$. Furthermore, Theorem~\ref{ramanujan} states that if $\pi$ is a globally generic cuspidal automorphic representation of a unitary group, then every local component $\pi_v$ is tempered; in particular, whenever $\pi_v$ is unramified, its Satake parameters have absolute value $1$.

Let us now describe our study of $L$-functions and Langlands functoriality in characteristic $p$, beginning in \S~\ref{sec:unitary}. We work with quasi-split odd or even unitary groups of rank $n$, i.e., ${\rm U}_N$, with $N = 2n+1$ or $2n$, and with the restriction of scalars ${\rm Res}\,{\rm GL}_m$ of a general linear group (where we use the underlying quadratic extension in common with ${\rm U}_N$). After some preliminaries, we recall Asai $L$-functions, which play an important role in the LS method for the unitary groups \cite{HeLo2013, Lo2016}, and in the description of the image of base change. Asai and twisted Asai $L$-functions are connected to Rankin-Selberg product $L$-functions, see \S~\ref{RSAsai}.

The defining properties of extended Langlands-Shahidi local factors in our case can be found in \S~\ref{sec:extended}. The list of ten axioms uniquely characterizes the system of $\gamma$-factors, $L$-functions and root numbers. Indeed, to prove uniqueness, we can proceed as in Theorem~4.3 of \cite{Lo2015}; we only observe that we may now use Lemma~3.1 of \cite{LomLS} to avoid the use of stability of $\gamma$-factors in the proof, as we do in Theorem~5.1 of \loccit{LomLS}. Existence for products of generic representations of ${\rm U}_N$ and ${\rm Res}\,{\rm GL}_m$ follows from \loccit{LomLS}, a case that is needed for us to establish base change, before extending LS-factors to include products of two unitary groups. Moreover, our main result on $L$-functions, Theorem~\ref{mainthm}, includes existence and uniqueness of an extended system of factors for generic representations.

In \S~\ref{sec:BC}, we establish a preliminary ``weak'' base change (agreeing with local base change at every unramified place) for globally generic representations, Proposition~\ref{weakBC}. Given a globally generic cuspidal automorphic representation $\pi$ of ${\rm U}_N$, we construct a candidate representation $\Pi$ for the base change to ${\rm Res}\,{\rm GL}_N$ place by place: at unramified places via the Satake correspondence \cite{Mi2011}; at split places by $\Pi_v = \pi_v \otimes \tilde{\pi}_v$; and at the finitely many remaining places by any representation with the same central character as $\pi_v$, thanks to stability under highly ramified twists. We are then able to obtain a base change by applying the converse theorem to $\Pi$, possible, due to the fact that all Langlands-Shahidi automorphic $L$-functions over function fields are \emph{nice} \cite{Lom19}, i.e., they satisfy a functional equation and are rational. In fact, if we incorporate twists by suitable characters, global LS L-functions become polynomials.

In \S\S~\ref{sec:local} and \ref{sec:LLF} we turn towards the local Langlands conjecture for the unitary groups, that is, local base change. Let ${\rm U}_N$ be defined over a non-archimedean local field $k$ of characteristic $p$ with underlying separable quadratic extension $l/k$. Then, Theorem~\ref{localLC} establishes the local transfer
\begin{equation*}
   \left\{ \begin{array}{c} \text{generic representations} \\ \pi \text{ of } {\rm U}_N(k) \end{array} \right\}
   \stackrel{\rm BC}{\longrightarrow}
   \left\{ \begin{array}{c} \text{generic representations} \\ \Pi \text{ of } {\rm GL}_N(l) \end{array} \right\}
\end{equation*}
The local base change $\Pi = {\rm BC}(\pi)$ is known as stable base change. It is uniquely characterized by the property that it preserves local $L$-functions, $\gamma$-factors and root numbers, as in the case of ${\rm GL}_N$ \cite{He1993}. The proof begins with a global to local argument, where we use the weak global base change of Proposition~\ref{weakBC} to deduce existence for generic supercuspidals of ${\rm U}_N(k)$; that these lifts are tempered is then obtained by a purely local argument, Proposition~\ref{scTempered}. We then go through the classification of generic representations for the unitary groups. In particular, the construction of discrete series by M\oe glin and Tadi\'c \cite{MoTa2002} plays an important role; their Basic Assumption (BA) is part of Theorem~5.4 of \cite{LomLS}, where we follow Shahidi for generic representations (in general, we verify (BA) in \cite{GaLo2018} without the generic assumption). Note that \cite{dCHeLo} settles the standard module conjecture in characteristic $p$. In addition, the work of M.~Tadi\'c \cite{Ta1986} on the classification of unitary representations for ${\rm GL}_m$ is very useful. Local base change in general is thus established recursively: the Langlands classification reduces to the tempered case; then, tempered representations are constructed via discrete series, which in turn are constructed via supercuspidals. In this article, we present complete proofs for generic representations, and we refer to \S\S~7 and 8 of \cite{GaLo2018} for a discussion of local base change for general admissible representations.

Let $E/F$ be a separable quadratic extension of global function fields. In \S~\ref{sec:strong}, we establish ``strong'' base change (agreeing with local base change at every place), Theorem~\ref{strongBC}, for globally generic representations of unitary groups:
\begin{equation*}
   \left\{ \begin{array}{c} \text{globally generic cuspidal} \\ \text{automorphic representations} \\ \pi \text{ of } {\rm U}_N(\mathbb{A}_F) \end{array} \right\}
   \stackrel{\rm BC}{\longrightarrow}
   \left\{ \begin{array}{c} \text{automorphic representations} \\ \Pi = \Pi_1 \boxplus \cdots \boxplus \Pi_d \\ \text{of } {\rm GL}_N(\mathbb{A}_E) \end{array} \right\}
\end{equation*}
Using the analytic properties of automorphic $L$-functions over function fields, we show that $\Pi = {\rm BC}(\pi)$ is an isobaric sum of pairwise distinct unitary cuspidal automorphic representations $\Pi_i$ of ${\rm GL}_{n_i}(\mathbb{A}_E)$, each satisfying $\Pi_i \cong \tilde{\Pi}_i^\theta$.

In \S~\ref{sec:RR} we conclude our study of $L$-functions for the unitary groups. For representations that are not necessarily generic, we note in \S~\ref{nongeneric} how to reduce the study of local factors to the case of generic representations, under the assumption that Shahidi's tempered $L$-packet conjecture is valid, Conjecture~\ref{temperedconj}. This conjecture is known in characteristic zero, and we refer to Ganapathy-Varma \cite{GaVa2017} for the validity of Conjecture~\ref{temperedconj} for the split classical groups in characteristic $p$. In \S~\ref{proofmain}, we complete the proof of Theorem~\ref{mainthm}, which includes the Riemann Hypothesis for our automorphic $L$-functions. And, we establish the Ramanujan Conjecture in Theorem~\ref{ramanujan}.

\subsection*{Acknowledgments} The author would like to thank professors F.~Shahidi and G.~Henniart for many mathematical discussions. Mathematical conversations with J.~Bernstein at MSRI and MPIM were particularly insightful. The author would like to thank H.~del Castillo, G.~Harder, Y.~Kim, M.~Krishnamurthy, P.~Kutzko, L.~Lafforgue, V.~Lafforgue, E.~Lapid, A.~M\'inguez, C.~M\oe glin, J.~Navarro, D.~Prasad, A.~Roche, R.~Schmidt, C.~Sch\"omann, S.~Varma and M.-F.~Vign\'eras. He is very grateful to the following institutions: Mathematical Sciences Research Institute, Max-Planck-Institut f\"ur Mathematik and Institut des Hautes \'Etudes Scientifiques. Work on this article was supported in part by MSRI NSF Grant DMS 0932078, Project VRIEA-PUCV 039.367 and FONDECYT Grant 1171583. The author acknowledges the use of Claude (Anthropic) for editorial and typesetting assistance during the final preparation of this manuscript.

\section{The quasi-split unitary groups and Asai \texorpdfstring{$L$}{L}-functions}\label{sec:unitary}

We give some preliminaries on unitary groups and their $L$-groups. We then recall the induction step of the LS method for the unitary groups, where we obtain Asai $L$-functions; we collect some of their basic properties and their connection to Rankin-Selberg $L$-functions.

\subsection{Unitary groups}\label{unitarygroups} Let $E$ be a degree-$2$ finite \'etale algebra over a field $F$ with non-trivial involution $\theta$. For a commutative $F$-algebra $R$, we let $\theta$ act on $E \otimes_F R$ via $\theta \otimes 1$. We write $\bar{x} = \theta(x)$, for $x \in E \otimes_F R$, and extend conjugation to elements $g = (g_{i,j})$ of ${\rm GL}_n(E \otimes_F R)$, i.e., $\bar{g} = (\bar{g}_{i,j})$. Let $N = 2n+1$ or $2n$. We fix the form
\begin{equation*}
   h_N(x,y) = \sum_{i=1}^{N} (-1)^{i-1} \bar{x}_i y_{N+1-i}, \quad x, y \in (E \otimes_F R)^N,
\end{equation*}
which is hermitian if $N$ is odd and skew-hermitian if $N$ is even; in the latter case, $\zeta h_N$ is hermitian for any $\zeta \in E^\times$ of trace zero, and defines the same unitary group.

We then have odd or even quasi-split unitary groups ${\rm U}_N$, defined over $F$ and of rank $n$ when $E$ is a field, whose group of $R$-points is given by
\begin{equation*}
   {\rm U}_N(R) = \left\{ g \in {\rm GL}_N(E \otimes_F R) \, | \, h_N(gx,gy) = h_N(x,y) \right\}.
\end{equation*}
Similarly, the restriction of scalars ${\rm Res}_{E/F}{\rm GL}_n$ has group of $R$-points ${\rm GL}_n(E \otimes_F R)$. Given a representation $\pi$ of ${\rm GL}_n(E \otimes_F R)$, we let $\pi^\theta$ be the representation given by $\pi^\theta(g) = \pi(\bar{g})$; its contragredient representation is $\tilde{\pi}^\theta$. We use this mainly for $R = F$, for a completion $F_v$ of a global field $F$, where $E \otimes_F F_v = E_v$, and for the ad\`eles $\mathbb{A}_F$, where $E \otimes_F \mathbb{A}_F = \mathbb{A}_E$.

In particular, we have two cases to which every degree-$2$ finite \'etale algebra over $F$ is isomorphic: $E$ the separable algebra $F \times F$, where we have $\theta(x) = \theta(x_1,x_2) = (x_2,x_1) = \bar{x}$ and ${\rm N}_{E/F}(x_1,x_2) = x_1x_2$; and, $E/F$ a separable quadratic extension, where we have ${\rm Gal}(E/F) = \left\{ 1, \theta \right\}$ and ${\rm N}_{E/F}(x) = x \bar{x}$. Notice that
\begin{equation*}
   {\rm U}_1(F) = E^1 = \ker({\rm N}_{E/F}),
\end{equation*}
where in the separable algebra case we embed $F \hookrightarrow E$ via $F \cong \left\{ (x,x) \in E | x \in F \right\}$ and $F^\times \hookrightarrow E^\times$ via $F^\times \cong \left\{ (x,x^{-1}) \in E | x \in F^\times \right\} = {\rm U}_1(F)$. In both cases, Hilbert's theorem 90 gives us a continuous surjection
\begin{equation}\label{hilbert90}
   \mathfrak{h} : E^\times \twoheadrightarrow E^1, \ x \mapsto x \bar{x}^{-1}.
\end{equation}

Throughout this article we let ${\bf G}$ be either ${\rm Res}_{E/F}{\rm GL}_n$ or a quasi-split unitary group ${\rm U}_N$ of rank $n$, given by its $R$-points as above, for a degree-$2$ finite \'etale algebra with involution $E$ over $F$. We write $G = {\bf G}(F)$ for its group of rational points.

Notice that in the case of the separable algebra $E = F \times F$, we have
\begin{equation*}
   {\rm U}_N(F) \cong {\rm GL}_N(F) \text{ and } {\rm Res}_{E/F}{\rm GL}_N(F) \cong {\rm GL}_N(F) \times {\rm GL}_N(F).
\end{equation*}
In this case, $\pi^\theta = \pi_2 \otimes \pi_1$ for $\pi = \pi_1 \otimes \pi_2$. This applies, in particular, to the completions $E_v = F_v \times F_v$ at the places $v$ of a global field $F$ which split in $E$.

\begin{remark}\label{alt:def:ueven}
For $N = 2n$, one can also define the quasi-split unitary group via the skew-hermitian form
\begin{equation*}
   h_{2n}'(x,y) = \sum_{i=1}^{n} \bar{x}_i y_{2n+1-i} - \sum_{i=1}^{n} \bar{x}_{2n+1-i} y_i, \quad x, y \in E^{2n}.
\end{equation*}
If we let
\begin{equation*}
   t = {\rm diag}(1,-1,\ldots,(-1)^{n-1},1,\ldots,1) \in {\rm GL}_{2n}(F),
\end{equation*}
then $h_{2n}(x,y) = h_{2n}'(tx,ty)$, and $g \mapsto tgt^{-1}$ gives an isomorphism of ${\rm U}_{2n}$ onto the unitary group of $h_{2n}'$, defined over $F$. It induces a correspondence between standard parabolic subgroups, compatibly with their standard Levi decompositions.
\end{remark}

\subsection{\texorpdfstring{$L$}{L}-groups} Let $E/F$ be a separable quadratic extension of global function fields and let $N = 2n+1$ or $2n$. Then, the $L$-group of ${\rm U}_N$ has connected component ${}^L{\rm U}_N^\circ = {\rm GL}_N(\mathbb{C})$; the $L$-group itself is given by the semidirect product
\begin{equation*}
   {}^L{\rm U}_N = {\rm GL}_N(\mathbb{C}) \rtimes \mathcal{W}_F.
\end{equation*}
A general reference is \cite{Bo1979}; however, we use the Weil form of the $L$-group. To describe the action of the Weil group, let $\Phi_N$ be the $N \times N$ matrix with $ij$-entries $\left( (-1)^{i-1}\delta_{i,N-j+1} \right)$, that is, the matrix of the form $h_N$. Then, the Weil group $\mathcal{W}_F$ acts on ${}^L{\rm U}_N$ through the quotient $\mathcal{W}_F/\mathcal{W}_E \cong {\rm Gal}(E/F) = \left\{ 1, \theta \right\}$ via the outer automorphism
\begin{equation*}
   \theta(g) = \Phi_N^{-1} \, {}^t g^{-1} \Phi_N.
\end{equation*}

\begin{remark}
The form $h_{2n}'$ of Remark~\ref{alt:def:ueven} has associated matrix
\[ J_{2n}' = \left( \begin{array}{cc} & J_n \\ -J_n & \end{array} \right), \]
with $J_n$ the $n \times n$ matrix with $ij$-entries $(\delta_{i,n-j+1})$. Then, the Weil group acts on ${\rm GL}_{2n}(\mathbb{C})$ via $\theta'(g) = J_{2n}'^{-1} \, {}^t g^{-1} J_{2n}'$. Since $\Phi_{2n} = t J_{2n}' t$, we have
\begin{equation*}
   \theta'(tgt^{-1}) = t \, \theta(g) \, t^{-1},
\end{equation*}
so that $(g,w) \mapsto (tgt^{-1},w)$ is an isomorphism between the two $L$-groups. The two actions are thus conjugate under the inner automorphism ${\rm Ad}(t)$, the dual counterpart of the isomorphism of Remark~\ref{alt:def:ueven}, i.e., $\theta' = {\rm Ad}(t) \circ \theta \circ {\rm Ad}(t)^{-1}$. For $n \geq 2$, only $\theta$ preserves the standard pinning of ${\rm GL}_{2n}(\mathbb{C})$. Since $t$ is diagonal, this identification preserves semisimple conjugacy classes, Satake parameters and the base change map, and hence the $L$-functions considered in this article.
\end{remark}

The Langlands base change lift that we will obtain is from a unitary group to the restriction of scalars group ${\bf H}_N = {\rm Res}_{E/F}{\rm GL}_N$. The corresponding $L$-group is given by
\begin{equation*}
   {}^LH_N = {\rm GL}_N(\mathbb{C}) \times {\rm GL}_N(\mathbb{C}) \rtimes \mathcal{W}_F,
\end{equation*}
where the Weil group $\mathcal{W}_F$ acts on ${\rm GL}_N(\mathbb{C}) \times {\rm GL}_N(\mathbb{C})$ through the quotient $\mathcal{W}_F/\mathcal{W}_E \cong {\rm Gal}(E/F) = \left\{ 1, \theta \right\}$ via
\begin{equation*}
   \theta(g_1 \times g_2) = g_2 \times g_1.
\end{equation*}

\subsection{Asai \texorpdfstring{$L$}{L}-functions} Assume first that $l/k$ is a quadratic extension of non-archimedean local fields. Let $\pi$ be an irreducible smooth representation of ${\rm GL}_n(l)$ and let $r_\mathcal{A}$ be the Asai representation
\begin{equation*}
   r_\mathcal{A} : {}^L{\rm Res}_{l/k}{\rm GL}_n \rightarrow {\rm GL}_{n^2}(\mathbb{C}),
\end{equation*}
given by
\begin{equation*}
   r_\mathcal{A}(x,y,1) = x \otimes y, \text{ and } r_\mathcal{A}(x,y,\theta) = y \otimes x.
\end{equation*}
Asai $\gamma$-factors arise in connection with the Langlands-Shahidi method, when we study the Siegel Levi ${\bf M} \cong {\rm Res}_{l/k}{\rm GL}_n$ of a unitary group ${\rm U}_N$, with $N = 2n$. Existence of the $\gamma$-factor
\begin{equation*}
   \gamma(s,\pi,r_\mathcal{A},\psi)
\end{equation*}
in positive characteristic is established in \cite{Lo2016}, where we also have Asai $L$-functions $L(s,\pi,r_\mathcal{A})$ and root numbers $\varepsilon(s,\pi,r_\mathcal{A},\psi)$.

Second, we have the case of $l$ the degree-$2$ finite \'etale algebra $k \times k$, where $\pi = \pi_1 \otimes \pi_2$ and Proposition~4.5 of \loccit{Lo2016} gives that
\begin{equation*}
   \gamma(s,\pi,r_\mathcal{A},\psi) = \gamma(s,\pi_1 \times \pi_2,\psi),
\end{equation*}
a Rankin-Selberg $\gamma$-factor. And, similarly for the corresponding $L$-functions and root numbers.

Asai local factors obtained via the Langlands-Shahidi method are indeed the correct ones. Theorem~3.1 of \cite{HeLo2013}, which we now state, establishes their compatibility with the local Langlands correspondence of \cite{LRS1993}.

\begin{theorem}[Henniart-Lomel\'i]\label{HLAsai}
Let $l/k$ be a separable quadratic extension of non-archimedean local fields of characteristic $p$, and let $\pi$ be an irreducible smooth representation of ${\rm GL}_n(l)$. Let $\sigma$ be the Weil-Deligne representation of $\mathcal{W}_l$ corresponding to $\pi$ via the local Langlands correspondence. Then
\begin{equation*}
   \gamma(s,\pi,r_\mathcal{A},\psi) = \gamma_k^{\rm Gal}(s,\tI(\sigma),\psi).
\end{equation*}
Here, $\tI(\sigma)$ denotes the representation of $\mathcal{W}_k$ obtained from $\sigma$ via tensor induction and the Galois $\gamma$-factors on the right hand side are those of Deligne and Langlands. Local $L$-functions and root numbers satisfy
\begin{align*}
   L(s,\pi,r_\mathcal{A}) &= L(s,\tI(\sigma)), \\
   \varepsilon(s,\pi,r_\mathcal{A},\psi) &= \varepsilon(s,\tI(\sigma),\psi).
\end{align*}
\end{theorem}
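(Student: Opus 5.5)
The plan follows the standard ``local factors are determined by their axioms'' strategy. Both sides are systems of $\gamma$-factors attached to pairs $(\pi,\psi)$: on the left via the Langlands-Shahidi method for the Siegel Levi of ${\rm U}_{2n}$, as in \cite{Lo2016}; on the right via Deligne-Langlands factors of $\tI(\sigma)$, transported to $\pi$ through the local Langlands correspondence of \cite{LRS1993}. I would show that the Galois side satisfies the same list of characterizing properties that uniquely determines the Langlands-Shahidi Asai $\gamma$-factors. These properties are:
\begin{itemize}
\item multiplicativity with respect to parabolic induction;
\item dependence on $\psi$ and behaviour under twists by characters;
\item agreement with Tate/Hecke factors for $n=1$, where $\tI(\chi)=\chi|_{k^\times}$ is the restriction of the character, so $\gamma(s,\chi,r_\mathcal{A},\psi)$ is the Tate factor of $\chi|_{k^\times}$;
\item the global functional equation;
\item stability under highly ramified twists.
\end{itemize}

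\textbf{Key steps.} \emph{First}, multiplicativity. On the Galois side it follows from the behaviour of tensor induction on direct sums. The Asai representation of $\sigma_1\oplus\sigma_2$ decomposes as
\begin{equation*}
\tI(\sigma_1)\oplus \tI(\sigma_2)\oplus {\rm Ind}_{\mathcal{W}_l}^{\mathcal{W}_k}(\sigma_1\otimes\sigma_2^\theta).
\end{equation*}
Inductivity of Galois $\gamma$-factors then turns the last summand into a Rankin-Selberg factor $\gamma(s,\pi_1\times\pi_2^\theta,\psi_l)$. The same factor appears on the Langlands-Shahidi side by the multiplicativity established in \cite{Lo2016}, and the two agree by compatibility of \cite{LRS1993} with Rankin-Selberg factors. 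This reduces the $\gamma$-factor identity to supercuspidal $\pi$, equivalently irreducible $\sigma$.

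\emph{Second}, a local-global argument. Embed $l/k$ as $E_{v_0}/F_{v_0}$ for a quadratic extension of global function fields. Globalize the supercuspidal $\pi$ to a cuspidal automorphic $\Pi$ of ${\rm GL}_n(\mathbb{A}_E)$, unramified at all places except $v_0$ and a set $S$ of auxiliary places, using a Poincaré-series argument. By L.~Lafforgue \cite{La2002}, $\Pi$ corresponds to a global $\ell$-adic Galois representation $\Sigma$ compatible with \cite{LRS1993} at every place. Compare the Langlands-Shahidi global functional equation for the partial Asai $L$-function with the Deligne-Grothendieck functional equation for $\tI(\Sigma)$. At unramified places the factors agree by the Satake computation; at split places both sides are Rankin-Selberg factors $\gamma(s,\Pi_{1,v}\times\Pi_{2,v},\psi_v)$, by Proposition~4.5 of \cite{Lo2016}. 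At the auxiliary places in $S$, first twist by a highly ramified character and use stability on both sides, which is Deligne's stability on the Galois side; alternatively, avoid this via Lemma~3.1 of \cite{LomLS}. Everything then cancels except at $v_0$, giving the equality of $\gamma$-factors there.

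\emph{Third}, deduce the $L$-function and root number identities. For tempered $\pi$ the Langlands-Shahidi $L$-function is defined as the normalized inverse of the polynomial in the numerator of $\gamma$. On the Galois side, $L(s,\tI(\sigma))$ has poles determined similarly, since $\tI(\sigma)$ is bounded whenever $\sigma$ is. So equality of $\gamma$ for all twists, together with the holomorphy and nonvanishing of tempered $L$-functions in $\Re s>0$, forces $L$ to agree. The root numbers then agree from $\gamma=\varepsilon\,L(1-s)^\vee/L(s)$. The general case follows by the Langlands classification, since both sides are defined through Langlands quotients in the same multiplicative way.

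\textbf{Main obstacle.} I expect the globalization step to be the hardest. One must produce $\Pi$ with prescribed supercuspidal component at $v_0$ and controlled ramification elsewhere, and must know Lafforgue's compatibility at the bad places precisely enough to apply stability. I would first try making $S$ consist of places split in $E$, so the comparison there is purely Rankin-Selberg and already known.
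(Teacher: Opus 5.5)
Your proposal follows essentially the Henniart--Lomel\'i argument that the paper relies on when it cites \cite{HeLo2013}: multiplicativity reduces the $\gamma$-identity to supercuspidals, and a globalization with controlled ramification, together with Lafforgue's correspondence and the two global functional equations, settles that case. The $L$- and $\varepsilon$-identities then follow from the tempered no-cancellation argument and the Langlands classification. One bookkeeping correction: in nonzero degree, inductivity reads $\gamma(s,{\rm Ind}_{\mathcal{W}_l}^{\mathcal{W}_k}\rho,\psi)=\lambda(l/k,\psi)^{\dim\rho}\gamma(s,\rho,\psi_l)$, so your Galois-side cross term is $\lambda(l/k,\psi)^{n_1n_2}\gamma(s,\pi_1\times\pi_2^\theta,\psi_l)$, and the comparison requires the Langlands--Shahidi multiplicativity for the restriction-of-scalars factor to carry the same Langlands constant.
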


\begin{remark}
Since we are in the case of ${\rm GL}_n$, the results of this section hold when $\pi$ is an irreducible smooth representation, and not just generic \cite{HeLo2013}. Furthermore, the Rankin-Selberg products of ${\rm GL}_m$ and ${\rm GL}_n$ that appear in this article arise in the context of the Langlands-Shahidi method in positive characteristic \cite{HeLo2013a}. These are equivalent to those obtained via the integral representation of \cite{JPSS1983}.
\end{remark}

The case of the Siegel Levi ${\bf M}$ of an odd unitary group ${\rm U}_{2n+1}$, i.e., $M \cong {\rm GL}_n(l) \times l^1$, with $l/k$ a quadratic extension of non-archimedean local fields, leads to twisted Asai $\gamma$-factors. Let $\eta_{l/k}$ be the character of class field theory corresponding to $l/k$, and let $\tau$ be an irreducible smooth representation of ${\rm GL}_n(l)$. Then, the existence in positive characteristic of
\begin{equation*}
   \gamma(s,\tau \otimes \eta_{l/k},r_\mathcal{A},\psi)
\end{equation*}
is also established in \cite{Lo2016}, in addition to the $L$-functions $L(s,\tau \otimes \eta_{l/k},r_\mathcal{A})$ and root numbers $\varepsilon(s,\tau \otimes \eta_{l/k},r_\mathcal{A},\psi)$.

Furthermore, the case $l = k \times k$ is also discussed in \loccit{Lo2016}, where $\tau = \tau_1 \otimes \tau_2$ and each $\tau_i$ is an irreducible smooth representation of ${\rm GL}_n(k)$. We thus have
\begin{equation*}
   \gamma(s,\tau \otimes \eta_{l/k},r_\mathcal{A},\psi) = \gamma(s,\tau_1 \times \tau_2,\psi),
\end{equation*}
where on the right hand side we have a Rankin-Selberg $\gamma$-factor.

\subsection{Rankin-Selberg products and Asai factors}\label{RSAsai} Given an irreducible smooth representation $\pi$ of ${\rm GL}_n(l)$, let $\pi^\theta$ be as in \S~\ref{unitarygroups}. Let $\sigma$ be the $n$-dimensional $\ell$-adic Frob-semisimple Weil-Deligne representation of $\mathcal{W}_l$ corresponding to $\pi$ via the local Langlands correspondence. Write $\sigma^\theta$ to denote the representation $w \mapsto \sigma(w_\theta w w_\theta^{-1})$, for $w_\theta \in \mathcal{W}_k \setminus \mathcal{W}_l$; notice that its isomorphism class does not depend on the choice of $w_\theta$.

\begin{proposition}
Let $\pi$ be an irreducible smooth representation of ${\rm GL}_n(l)$. Then
\begin{align}
   \gamma(s,\pi,r_\mathcal{A},\psi) &= \gamma(s,\pi^\theta,r_\mathcal{A},\psi),  \\
   \gamma(s,\pi \otimes \eta_{l/k},r_\mathcal{A},\psi) &= \gamma(s,\pi^\theta \otimes \eta_{l/k},r_\mathcal{A},\psi), 
\end{align}
and we have the following equation involving Rankin-Selberg and Asai $\gamma$-factors
\begin{equation}\label{asaiRS}
   \gamma(s,\pi \times \pi^\theta,\psi_l) = \gamma(s,\pi,r_\mathcal{A},\psi) \gamma(s,\pi \otimes \eta_{l/k},r_\mathcal{A},\psi).
\end{equation}
\end{proposition}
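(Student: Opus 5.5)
The plan is to move everything to the Galois side and verify each identity there, using Theorem~\ref{HLAsai} for the Asai factors and the local Langlands correspondence of \cite{LRS1993} for the Rankin-Selberg factors. Let $\sigma \leftrightarrow \pi$ be the Weil-Deligne representation of $\mathcal{W}_l$. The correspondence commutes with automorphisms of $l$, so $\pi^\theta \leftrightarrow \sigma^\theta$. It also respects twisting by characters, so $\pi \otimes \eta_{l/k} \leftrightarrow \sigma \otimes \eta_{l/k}$. Here $\eta_{l/k}$ in the twisted Asai factor is read, as in \cite{Lo2016}, through a character of $l^\times$ extending $\eta_{l/k}$. Theorem~\ref{HLAsai} then identifies the Asai $\gamma$-factors with $\gamma_k^{\rm Gal}(s,\tI(\sigma),\psi)$. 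For the twisted factors it gives $\gamma_k^{\rm Gal}(s,\tI(\sigma)\otimes\eta_{l/k},\psi)$, up to the convention that the tensor induction of the twisted parameter equals $\tI(\sigma)\otimes\eta_{l/k}$, with $\eta_{l/k}$ viewed as a character of $\mathcal{W}_k$. I will verify this against the definitions in \cite{Lo2016}.

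For the first two identities, the key point is that $\tI(\sigma^\theta) \cong \tI(\sigma)$. Tensor induction from an index-two subgroup depends only on the $\mathcal{W}_k$-conjugacy class of the representation. Concretely, $\tI(\sigma)$ is realized on $\sigma \otimes \sigma^{w_\theta}$, and replacing $\sigma$ by $\sigma^\theta$ just swaps the two factors, with $w_\theta$ acting compatibly. Twisting by $\eta_{l/k}$ commutes with this, since $\eta_{l/k}$ is $\theta$-invariant. Both identities then follow by applying Theorem~\ref{HLAsai} on each side.

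For \eqref{asaiRS}, the Galois side reads $\gamma^{\rm Gal}_l(s,\sigma\otimes\sigma^\theta,\psi_l)$, with $\psi_l = \psi\circ{\rm tr}_{l/k}$. I would use the standard decomposition
\[
{\rm Ind}_{\mathcal{W}_l}^{\mathcal{W}_k}(\sigma \otimes \sigma^\theta) \cong \tI(\sigma) \oplus \bigl(\tI(\sigma)\otimes\eta_{l/k}\bigr).
\]
This holds because $\tI(\sigma)$ restricted to $\mathcal{W}_l$ is $\sigma\otimes\sigma^\theta$, and Frobenius reciprocity together with ${\rm Ind}\,{\rm Res}\, V = V \oplus V\otimes\eta_{l/k}$ gives the splitting. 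Inductivity in degree zero of Deligne-Langlands $\gamma$-factors then gives
\[
\gamma_l(s,\sigma\otimes\sigma^\theta,\psi_l) = \lambda(l/k,\psi)^{n^2}\,\gamma_k(s,{\rm Ind}(\sigma\otimes\sigma^\theta),\psi).
\]
The Langlands $\lambda$-constants must cancel, which uses $\lambda(l/k,\psi)^2 = \eta_{l/k}(-1)$ and the sign $\det(\tI(\sigma))(-1)$ relations. Alternatively, they may already be absorbed into the normalization of the twisted Asai factor in \cite{Lo2016}. Finally, the compatibility of Rankin-Selberg factors with the correspondence, $\gamma(s,\pi\times\pi^\theta,\psi_l) = \gamma_l(s,\sigma\otimes\sigma^\theta,\psi_l)$, transfers the identity back to the automorphic side.

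I expect the main obstacle to be the bookkeeping of $\lambda$-constants and of the twist convention in \cite{Lo2016}. If the Galois-side cancellation is awkward, a fallback is a purely automorphic argument. One would realize $\pi\times\pi^\theta$ inside ${\rm U}_{2n}$ and ${\rm U}_{2n+1}$ LS computations via multiplicativity and a global argument, using the split-place identities recalled above, where both sides are Rankin-Selberg factors. The Galois route, however, seems the most direct.
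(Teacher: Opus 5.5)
Your argument for the first identity matches the paper's: $\pi^\theta$ corresponds to $\sigma^\theta$, $\tI(\sigma^\theta)\cong\tI(\sigma)$, and Theorem~\ref{HLAsai} concludes.

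The gap is in the other two identities, which you route through a twisted analogue of Theorem~\ref{HLAsai}, namely $\gamma(s,\pi\otimes\eta_{l/k},r_\mathcal{A},\psi)=\gamma_k^{\rm Gal}(s,\tI(\sigma)\otimes\eta_{l/k},\psi)$. Theorem~\ref{HLAsai} only covers the untwisted factor coming from ${\rm U}_{2n}$. Nor is this a matter of convention. In the paper the twisted factor is not defined as the Asai factor of a twist $\pi\otimes\tilde\eta$ with $\tilde\eta|_{k^\times}=\eta_{l/k}$. It comes from the local coefficient of the Siegel Levi ${\rm GL}_n(l)\times l^1$ of ${\rm U}_{2n+1}$, so its Galois interpretation is a theorem in its own right.

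Moreover, your assertion that the $\lambda$-constants cancel is false. Inductivity in degree zero, together with your (correct) decomposition of ${\rm Ind}_{\mathcal{W}_l}^{\mathcal{W}_k}(\sigma\otimes\sigma^\theta)$, gives exactly
\[ \gamma_l(s,\sigma\otimes\sigma^\theta,\psi_l)=\lambda(l/k,\psi)^{-n^2}\,\gamma_k(s,\tI(\sigma),\psi)\,\gamma_k(s,\tI(\sigma)\otimes\eta_{l/k},\psi). \]
Since $\lambda(l/k,\psi)^4=1$, the constant is trivial for even $n$ but equals $\lambda(l/k,\psi)^{-1}$ for odd $n$, which in general is not $1$. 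For $n=1$ this is the classical formula $\gamma(s,\mu\circ{\rm N}_{l/k},\psi_l)=\lambda(l/k,\psi)^{-1}\gamma(s,\mu,\psi)\gamma(s,\mu\eta_{l/k},\psi)$. Nothing on the Galois side can absorb this constant via determinant sign relations. So, given Theorem~\ref{HLAsai} and the Rankin--Selberg correspondence, the third identity is \emph{equivalent} to $\gamma(s,\pi\otimes\eta_{l/k},r_\mathcal{A},\psi)=\lambda(l/k,\psi)^{-n^2}\gamma_k^{\rm Gal}(s,\tI(\sigma)\otimes\eta_{l/k},\psi)$. The input you propose, without the constant, would contradict it for odd $n$ whenever $\lambda(l/k,\psi)\neq 1$. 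The corrected input is just a restatement of what has to be proved.

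The paper never uses a Galois description of the twisted factor. It proves the third identity directly on the automorphic side. First it treats principal series, via multiplicativity of the Rankin--Selberg, Asai and twisted Asai $\gamma$-factors. It then handles the general case by the Henniart--Lomel\'i local-to-global technique: globalize, compare global functional equations, and isolate the place of interest. The second identity is then formal. Apply the third identity to $\pi$ and to $\pi^\theta$, and use the symmetry $\gamma(s,\pi\times\pi^\theta,\psi_l)=\gamma(s,\pi^\theta\times\pi,\psi_l)$ together with the first identity. Your fallback sentence points in this direction, but without the principal series case and the globalization step it is not yet an argument.
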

\begin{proof}
With $\sigma$ as above, the local Langlands correspondence matches $\sigma^\theta$ with $\pi^\theta$. Moreover, from the definition of tensor induction (see \cite{CuRe1981}), we have $\tI(\sigma) \cong \tI(\sigma^\theta)$. Artin $L$-functions and root numbers remain the same for equivalent Weil-Deligne representations, thus
\begin{equation*}
   \gamma(s,\tI(\sigma),\psi) = \gamma(s,\tI(\sigma^\theta),\psi).
\end{equation*}
Hence, by Theorem~\ref{HLAsai}, the first equation of the proposition follows.

The second equation, involving twisted Asai factors, follows from the first and the third. To prove equation~\eqref{asaiRS}, we first use multiplicativity of $\gamma$-factors to establish it for principal series representations. Then we treat the general case via the local-to-global technique of \cite{HeLo2013a, HeLo2013}.
\end{proof}

\begin{corollary}\label{RSAsaicor}
Let $\pi$ be an irreducible smooth representation of ${\rm GL}_n(l)$. Then
\begin{align*}
   L(s,\pi,r_\mathcal{A}) &= L(s,\pi^\theta,r_\mathcal{A}), \\
   \varepsilon(s,\pi,r_\mathcal{A},\psi) &= \varepsilon(s,\pi^\theta,r_\mathcal{A},\psi)
\end{align*}
and
\begin{align*}
   L(s,\pi \otimes \eta_{l/k},r_\mathcal{A}) &= L(s,\pi^\theta \otimes \eta_{l/k},r_\mathcal{A}), \\
   \varepsilon(s,\pi \otimes \eta_{l/k},r_\mathcal{A},\psi) &= \varepsilon(s,\pi^\theta \otimes \eta_{l/k},r_\mathcal{A},\psi).
\end{align*}
Furthermore, we have the following equation involving Rankin-Selberg and Asai factors
\begin{align*}
   L(s,\pi \times \pi^\theta) &= L(s,\pi,r_\mathcal{A}) L(s,\pi \otimes \eta_{l/k},r_\mathcal{A}) \\
   \varepsilon(s,\pi \times \pi^\theta,\psi_l) &= \varepsilon(s,\pi,r_\mathcal{A},\psi) \varepsilon(s,\pi \otimes \eta_{l/k},r_\mathcal{A},\psi).
\end{align*}
\end{corollary}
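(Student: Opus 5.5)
The plan is to deduce the corollary from the preceding proposition together with Theorem~\ref{HLAsai} and its twisted analogue, arguing on the Galois side where possible. For the first two equalities there is a direct route. By Theorem~\ref{HLAsai}, $L(s,\pi,r_\mathcal{A}) = L(s,\tI(\sigma))$ and $\varepsilon(s,\pi,r_\mathcal{A},\psi) = \varepsilon(s,\tI(\sigma),\psi)$. The local Langlands correspondence matches $\pi^\theta$ with $\sigma^\theta$, and $\tI(\sigma)\cong\tI(\sigma^\theta)$, as already noted in the proof of the proposition. Since Artin $L$-functions and Deligne–Langlands root numbers depend only on the isomorphism class, the untwisted identities follow.

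For the twisted Asai factors I would use the Galois description of $\gamma(s,\pi\otimes\eta_{l/k},r_\mathcal{A},\psi)$, namely the factor attached to $\tI(\sigma)\otimes\eta_{l/k}$, where $\eta_{l/k}$ is viewed as a character of $\mathcal{W}_k$. This description comes from the same local-to-global method of \cite{HeLo2013}, or can be obtained from it through the product formula. Since $\tI(\sigma)\otimes\eta_{l/k}\cong\tI(\sigma^\theta)\otimes\eta_{l/k}$, the twisted identities follow in the same way.

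For the Rankin–Selberg identity, the Galois side gives it cleanly. Under the local Langlands correspondence for ${\rm GL}_n$, $L(s,\pi\times\pi^\theta)=L(s,\sigma\otimes\sigma^\theta)$. Mackey theory for tensor induction gives ${\rm Ind}_{\mathcal{W}_l}^{\mathcal{W}_k}(\sigma\otimes\sigma^\theta) \cong \tI(\sigma)\oplus\tI(\sigma)\otimes\eta_{l/k}$: the restriction of $\tI(\sigma)$ to $\mathcal{W}_l$ is $\sigma\otimes\sigma^\theta$, and inducing a restriction tensors with ${\rm Ind}\,1 = 1\oplus\eta_{l/k}$. Inductivity in degree zero of $L$-functions, together with inductivity of $\varepsilon$ after normalizing by $\psi_l=\psi\circ{\rm Tr}_{l/k}$, then gives both the $L$ and the $\varepsilon$ identities, up to the $\lambda$-constant. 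That constant enters twice with a virtual-degree-zero correction, and it cancels because the representation $\sigma\otimes\sigma^\theta$ has dimension $n^2$ on both sides.

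The alternative, purely from the proposition, is to pass from $\gamma$ to $L$ and $\varepsilon$ by the usual induction on tempered, discrete and supercuspidal cases. In that approach one writes $L$ as the normalized inverse of the numerator polynomial of $\gamma$ for tempered representations and uses the Langlands classification in general. This is where the main difficulty lies. One must know that the Asai $L$-functions of tempered representations have no poles in $\Re(s)>0$, and that their product has no cancellation with the dual factors. This is supplied by the Galois identification of Theorem~\ref{HLAsai}, which is why I would take the Galois route as the primary argument. Finally, the root numbers follow from $\gamma = \varepsilon\, L(1-s,\tilde{\ })/L(s,\cdot)$ once the $L$-function identities are in hand.
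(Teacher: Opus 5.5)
Your treatment of the untwisted $\theta$-invariance is fine; it is the paper's argument for $\gamma$, applied to $L$ and $\varepsilon$. The $L$-part of your Mackey argument is also fine, given the twisted $L$-identification. The gap is in the $\varepsilon$-part of the Rankin--Selberg identity: the $\lambda$-constant does not cancel. Inductivity gives
\[
\varepsilon(s,\tI(\sigma),\psi)\,\varepsilon(s,\tI(\sigma)\otimes\eta_{l/k},\psi)=\varepsilon\bigl(s,{\rm Ind}_{\mathcal{W}_l}^{\mathcal{W}_k}(\sigma\otimes\sigma^\theta),\psi\bigr)=\lambda(l/k,\psi)^{n^2}\,\varepsilon(s,\sigma\otimes\sigma^\theta,\psi_l).
\]
To see where the constant comes from, apply degree-zero inductivity to $\sigma\otimes\sigma^\theta-n^2\cdot 1_l$, whose induction is $\tI(\sigma)\oplus\tI(\sigma)\otimes\eta_{l/k}-n^2\,{\rm Ind}\,1_l$. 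The correction terms contribute $\varepsilon(s,{\rm Ind}\,1_l,\psi)^{n^2}/\varepsilon(s,1_l,\psi_l)^{n^2}=\lambda(l/k,\psi)^{n^2}$. This factor appears once, and nothing on the other side cancels it.

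For quadratic $l/k$ one has $\lambda(l/k,\psi)=\varepsilon(\tfrac12,\eta_{l/k},\psi)$ and $\lambda(l/k,\psi)^2=\eta_{l/k}(-1)$. So $\lambda(l/k,\psi)^{n^2}\neq 1$ in general; for instance it equals $\pm i$ when $n$ is odd and $\eta_{l/k}(-1)=-1$. Already for $n=1$ the identity reads
\[
\varepsilon(s,\chi|_{k^\times},\psi)\,\varepsilon(s,\chi|_{k^\times}\eta_{l/k},\psi)=\lambda(l/k,\psi)\,\varepsilon(s,\chi\chi^\theta,\psi_l).
\]

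This undermines the input of your second paragraph, namely that the Langlands--Shahidi twisted Asai factor is exactly the Galois factor of $\tI(\sigma)\otimes\eta_{l/k}$. That identification is not in the paper, since Theorem~\ref{HLAsai} only treats $r_\mathcal{A}$. Worse, it is incompatible with Theorem~\ref{HLAsai}, \eqref{asaiRS}, and the compatibility of Rankin--Selberg factors with the local Langlands correspondence. Together these force
\[
\gamma(s,\pi\otimes\eta_{l/k},r_\mathcal{A},\psi)=\lambda(l/k,\psi)^{-n^2}\gamma(s,\tI(\sigma)\otimes\eta_{l/k},\psi).
\]
With your unnormalized identification, the root-number identity you set out to prove would be off by $\lambda(l/k,\psi)^{n^2}$.

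The paper never identifies the twisted factor on the Galois side. It takes the $\gamma$-identities of the preceding proposition as given and applies the argument of \S~4.2 of \cite{HeLo2013}, which is essentially your \emph{alternative} route:
\begin{itemize}
\item for tempered $\pi$, the $L$-function is the inverse of the normalized numerator of $\gamma$;
\item there is no cancellation between numerator and denominator, because the tempered $L$-functions involved have no poles in $\Re(s)>0$ (this is where Theorem~\ref{HLAsai} and the local Langlands correspondence enter, and a constant such as $\lambda$ does not affect zeros or poles);
\item $\varepsilon$ is then read off from $\gamma=\varepsilon\,L(1-s,\cdot)/L(s,\cdot)$;
\item the Langlands classification handles the general case.
\end{itemize}
In this route the $\lambda$-constant is simply absorbed into the Langlands--Shahidi normalization of the twisted factor. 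You should therefore make your closing sentence (root numbers from $\gamma$ and $L$) the main argument for every $\varepsilon$-statement. For the twisted $\theta$-invariance, a Galois identification up to a constant depending only on $(l/k,\psi,n)$ would also suffice, since such a constant cancels. But you would have to derive that identification from \eqref{asaiRS} rather than assume the unnormalized one.
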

\begin{proof}
Since we are in the case of ${\rm GL}_n$, the argument found in \S~4.2 of \cite{HeLo2013} directly applies to the cases at hand.
\end{proof}

\section{Extended Langlands-Shahidi local factors for the unitary groups}\label{sec:extended}

Let ${\bf G}_1$ be either a quasi-split unitary group ${\rm U}_M$ or the group ${\rm Res}\,{\rm GL}_M$. In the case of ${\bf G}_1 = {\rm U}_M$, it is of rank $m$, where we write $M = 2m+1$ or $2m$ according to whether the unitary group is odd or even. Similarly, we let ${\bf G}_2$ be either a unitary group of rank $n$ or ${\rm Res}\,{\rm GL}_N$, with $N = 2n+1$ or $2n$. When convenient, we sometimes write $N_1$ for $M$ and $N_2$ for $N$.

In \S~\ref{separable} below we treat the case of the separable quadratic algebra $l = k \times k$; thus we extend the local theory to all degree-$2$ finite \'etale algebras $l$ over the non-archimedean local field $k$. We then proceed to list defining and basic properties for an extended system of LS $L$-functions, $\gamma$-factors and root numbers in the remaining sections.

\subsection{Local notation} Let $\mathfrak{ls}({\bf G}_1,{\bf G}_2,p)$ be the category whose objects are quadruples $(l/k,\pi,\tau,\psi)$ consisting of: a non-archimedean local field $k$, with ${\rm char}(k) = p$; a degree-$2$ finite \'etale algebra $l$ over $k$; irreducible admissible representations $\pi$ of $G_1 = {\bf G}_1(k)$ and $\tau$ of $G_2 = {\bf G}_2(k)$; and, a smooth non-trivial additive character $\psi : k \rightarrow \mathbb{C}^\times$. When ${\bf G}_1$ and ${\bf G}_2$ are clear from context, we will simply write $\mathfrak{ls}(p)$ for $\mathfrak{ls}({\bf G}_1,{\bf G}_2,p)$.

We obtain a character $\psi_l : l \rightarrow \mathbb{C}^\times$ from the character $\psi$ of $k$ via the trace, i.e., $\psi_l = \psi \circ {\rm Tr}_{l/k}$. As in \cite{LomLS}, generic representations are always assumed to be irreducible. We say that $(l/k,\pi,\tau,\psi) \in \mathfrak{ls}(p)$ is generic (resp. supercuspidal, discrete series, \ldots) if both $\pi$ and $\tau$ are generic (resp. supercuspidal, discrete series, \ldots) representations. Given a parabolic subgroup ${\bf P}$ of a connected reductive group ${\bf G}$ defined over $k$, we write ${\rm ind}_P^G$ for normalized parabolic induction. When $P$ and $G$ are clear from context, we simply write ${\rm Ind}$. For a non-archimedean local field $K$ in general, we let $q_K$ denote the cardinality of its residue field.

\subsection{Global notation} Let $\mathcal{LS}({\bf G}_1,{\bf G}_2,p)$, or simply $\mathcal{LS}(p)$, be the category whose objects are quintuples $(E/F,\pi,\tau,\psi,S)$ consisting of: a separable quadratic extension of global function fields $E/F$, with ${\rm char}(F) = p$; globally generic cuspidal automorphic representations $\pi = \otimes_v \pi_v$ of ${\bf G}_1(\mathbb{A}_F)$ and $\tau = \otimes_v \tau_v$ of ${\bf G}_2(\mathbb{A}_F)$; a non-trivial character $\psi = \otimes_v \psi_v : F \backslash \mathbb{A}_F \rightarrow \mathbb{C}^\times$; and, a finite set of places $S$ such that $E/F$, $\pi$, $\tau$ and $\psi$ are unramified outside of $S$.

We let $q$ be the cardinality of the field of constants of $F$. And, for every place $v$ of $F$, we let $q_v$ be the cardinality of the residue field of $F_v$.

Let $(E/F,\pi,\tau,\psi,S) \in \mathcal{LS}(p)$, then we have partial $L$-functions
\begin{equation*}
   L^S(s,\pi \times \tau) = \prod_{v \notin S} L(s,\pi_v \times \tau_v).
\end{equation*}
When the set $S$ plays no role, we omit it from the notation and simply write $(E/F,\pi,\tau,\psi) \in \mathcal{LS}(p)$.

\subsection{The case of a separable algebra}\label{separable} Let $l = k \times k$. We then have the following possibilities for Langlands-Shahidi $\gamma$-factors:
\begin{itemize}
   \item[(i)] Let $(l/k,\pi,\tau,\psi) \in \mathfrak{ls}({\rm U}_M,{\rm U}_N,p)$. Then $\pi$ is a representation of ${\rm GL}_M(k)$ and $\tau$ one of ${\rm GL}_N(k)$. The local functorial lift of $\pi$ to ${\bf H}_M(k)$ obtained from
   \begin{equation*}
      {\rm U}_M(k) = {\rm GL}_M(k) \rightsquigarrow {\bf H}_M(k) = {\rm GL}_M(k) \times {\rm GL}_M(k)
   \end{equation*}
   is given by $\pi \otimes \tilde{\pi}$. Similarly the local functorial lift of $\tau$ to ${\bf H}_N(k)$ obtained from
   \begin{equation*}
      {\rm U}_N(k) = {\rm GL}_N(k) \rightsquigarrow {\bf H}_N(k) = {\rm GL}_N(k) \times {\rm GL}_N(k)
   \end{equation*}
   is given by $\tau \otimes \tilde{\tau}$. Then the Langlands-Shahidi local factors are
   \begin{align*}
      \gamma_{l/k}(s,\pi \times \tau,\psi_l) &= \gamma(s,\pi \times \tau,\psi) \gamma(s,\tilde{\pi} \times \tilde{\tau},\psi) \\
      L_{l/k}(s,\pi \times \tau) &= L(s,\pi \times \tau) L(s,\tilde{\pi} \times \tilde{\tau}) \\
      \varepsilon_{l/k}(s,\pi \times \tau,\psi_l) &= \varepsilon(s,\pi \times \tau,\psi) \varepsilon(s,\tilde{\pi} \times \tilde{\tau},\psi).
   \end{align*}
   \item[(ii)] Let $(l/k,\pi,\tau,\psi) \in \mathfrak{ls}({\rm U}_M,{\rm Res}\,{\rm GL}_N,p)$. Then $\pi$ is a representation of ${\rm GL}_M(k)$ and $\tau$ one of ${\rm GL}_N(k) \times {\rm GL}_N(k)$. The local functorial lift of $\pi$ to ${\bf H}_M(k)$ obtained from
   \begin{equation*}
      {\rm U}_M(k) = {\rm GL}_M(k) \rightsquigarrow {\bf H}_M(k) = {\rm GL}_M(k) \times {\rm GL}_M(k)
   \end{equation*}
   is given by $\pi \otimes \tilde{\pi}$. Write $\tau = \tau_1 \otimes \tau_2$ as a representation of
   \begin{equation*}
      {\rm Res}_{l/k}{\rm GL}_N(k) = {\rm GL}_N(k) \times {\rm GL}_N(k).
   \end{equation*}
   Then the Langlands-Shahidi local factors are
   \begin{align*}
      \gamma_{l/k}(s,\pi \times \tau,\psi_l) &= \gamma(s,\pi \times \tau_1,\psi) \gamma(s,\tilde{\pi} \times \tau_2,\psi) \\
      L_{l/k}(s,\pi \times \tau) &= L(s,\pi \times \tau_1) L(s,\tilde{\pi} \times \tau_2) \\
      \varepsilon_{l/k}(s,\pi \times \tau,\psi_l) &= \varepsilon(s,\pi \times \tau_1,\psi) \varepsilon(s,\tilde{\pi} \times \tau_2,\psi).
   \end{align*}
   \item[(iii)] Let $(l/k,\pi,\tau,\psi) \in \mathfrak{ls}({\rm Res}\,{\rm GL}_M,{\rm Res}\,{\rm GL}_N,p)$. Then $\pi = \pi_1 \otimes \pi_2$ is a representation of ${\rm GL}_M(k) \times {\rm GL}_M(k)$ and $\tau = \tau_1 \otimes \tau_2$ one of ${\rm GL}_N(k) \times {\rm GL}_N(k)$. Then the Langlands-Shahidi local factors are
   \begin{align*}
      \gamma_{l/k}(s,\pi \times \tau,\psi_l) &= \gamma(s,\pi_1 \times \tau_1,\psi) \gamma(s,\pi_2 \times \tau_2,\psi) \\
      L_{l/k}(s,\pi \times \tau) &= L(s,\pi_1 \times \tau_1) L(s,\pi_2 \times \tau_2) \\
      \varepsilon_{l/k}(s,\pi \times \tau,\psi_l) &= \varepsilon(s,\pi_1 \times \tau_1,\psi) \varepsilon(s,\pi_2 \times \tau_2,\psi).
   \end{align*}
\end{itemize}

\begin{remark}
We usually drop the subscripts $l/k$ when dealing with Langlands-Shahidi local factors. Hopefully, it is clear from context what we mean by an $L$-function, and related local factors, at split places of a global function field.
\end{remark}

\begin{remark}
It is interesting to note that the theory of the Langlands-Shahidi local coefficient can be treated directly and uniformly for unitary groups defined over a degree-$2$ finite \'etale algebra $l$ over a non-archimedean local field $k$ as in \cite{Lo2016}. Alternatively, one can use the isomorphism ${\rm U}_N \cong {\rm GL}_N$ in the case of the split algebra $l = k \times k$.
\end{remark}

\subsection{Axioms}\label{axioms} We now provide a list of properties that uniquely characterize the rules $\gamma$, $L$ and $\varepsilon$ on $\mathfrak{ls}(p)$. Locally, we assume that we have quadratic field extensions $l/k$.
\begin{itemize}
   \item[(i)] (Naturality). \emph{Let $(l/k,\pi,\tau,\psi) \in \mathfrak{ls}(p)$ be generic and let $\eta : l'/k' \rightarrow l/k$ be an isomorphism of local field extensions. Let $(l'/k',\pi',\tau',\psi') \in \mathfrak{ls}(p)$ be the quadruple obtained via $\eta$. Then}
   \begin{equation*}
      \gamma(s,\pi \times \tau,\psi_l) = \gamma(s,\pi' \times \tau',\psi'_{l'}).
   \end{equation*}
   \item[(ii)] (Isomorphism). \emph{Let $(l/k,\pi,\tau,\psi)$, $(l/k,\pi',\tau',\psi) \in \mathfrak{ls}(p)$ be generic quadruples such that $\pi \cong \pi'$ and $\tau \cong \tau'$. Then}
   \begin{equation*}
      \gamma(s,\pi \times \tau,\psi_l) = \gamma(s,\pi' \times \tau',\psi_l).
   \end{equation*}
   \item[(iii)] (Compatibility with class field theory). \emph{Let ${\bf G}_i$ be either ${\rm U}_1$ or ${\rm Res}\,{\rm GL}_1$, for $i = 1$ or $2$, and let $(l/k,\chi_1,\chi_2,\psi) \in \mathfrak{ls}({\bf G}_1,{\bf G}_2,p)$. In the case of ${\rm U}_1$, we extend a character $\chi_i$ of ${\rm U}_1(k) = l^1$ to one of ${\rm Res}_{l/k}{\rm GL}_1(k) = {\rm GL}_1(l)$ via Hilbert's theorem 90. Then}
   \begin{equation*}
      \gamma(s,\chi_1 \times \chi_2,\psi_l) = \gamma(s,\chi_1\chi_2,\psi_l),
   \end{equation*}
   \emph{where the $\gamma$-factors on the right hand side are those of Tate's thesis for ${\rm GL}_1(l)$.}
   \item[(iv)] (Multiplicativity). \emph{Let $i = 1$ or $2$. For $1 \leq j \leq b_i$, let ${\bf G}_{i,j} = {\rm Res}\,{\rm GL}_{n_{i,j}}$, and take ${\bf G}_{i,0}$ of the same type as ${\bf G}_i$. Let $(l/k,\pi_{1,d},\pi_{2,e},\psi) \in \mathfrak{ls}({\bf G}_{1,d},{\bf G}_{2,e},p)$ be generic, for $0 \leq d \leq b_1$, $0 \leq e \leq b_2$. Suppose that each $\pi_i$ is the generic constituent of}
   \begin{equation*}
      {\rm ind}_{P_i}^{G_i}(\pi_{i,b_i} \otimes \cdots \otimes \pi_{i,1} \otimes \pi_{i,0}),
   \end{equation*}
   \emph{where ${\bf P}_i$ is the parabolic subgroup of ${\bf G}_i$ with Levi}
   \begin{equation*}
      {\bf M}_i = \prod_{j=1}^{b_i} {\rm Res}_{l/k}{\rm GL}_{n_{i,j}} \times {\bf G}_{i,0}.
   \end{equation*}
   \emph{Then}
   \begin{equation*}
      \gamma(s,\pi_1 \times \pi_2,\psi_l) = \prod_{d,e} \gamma(s,\rho_{1,d} \times \rho_{2,e},\psi_l),
   \end{equation*}
   \emph{where for each $i = 1$ or $2$, $\rho_{i,j}$ is the $j$-th entry of $\mathfrak{A}_i$ defined by}
   \begin{equation*}
      \mathfrak{A}_i = \left\{ \begin{array}{ll} (\pi_{i,b_i}, \ldots, \pi_{i,1}, \pi_{i,0}, \tilde{\pi}_{i,1}^\theta, \ldots, \tilde{\pi}_{i,b_i}^\theta) & \text{if } {\bf G}_i = {\rm U}_{N_i} \\ (\pi_{i,b_i}, \ldots, \pi_{i,1}, \pi_{i,0}) & \text{if } {\bf G}_i = {\rm Res}\,{\rm GL}_{N_i} \end{array} \right.
   \end{equation*}
   \item[(v)] (Dependence on $\psi$). \emph{Let $(l/k,\pi,\tau,\psi) \in \mathfrak{ls}(p)$ be generic and let $a \in k^\times$, and let $\psi_l^a$ be the character of $l$ defined by $\psi_l^a(x) = \psi_l(ax)$. Let $\omega_\pi$ and $\omega_\tau$ be the central characters of $\pi$ and $\tau$, extended to $l^\times$ via Hilbert's theorem 90 for unitary groups. Then}
   \begin{equation*}
      \gamma(s,\pi \times \tau,\psi_l^a) = \omega_\pi(a)^N \omega_\tau(a)^M \left| a \right|_l^{MN(s-\frac{1}{2})} \gamma(s,\pi \times \tau,\psi_l).
   \end{equation*}
   \item[(vi)] (Functional Equation). \emph{Let $(E/F,\pi,\tau,\psi,S) \in \mathcal{LS}(p)$, then}
   \begin{equation*}
      L^S(s,\pi \times \tau) = \prod_{v \in S} \gamma(s,\pi_v \times \tau_v,\psi_v) \, L^S(1-s,\tilde{\pi} \times \tilde{\tau}).
   \end{equation*}
   \emph{At split places $v$ of $F$, where $E_v \cong F_v \times F_v$, the Langlands-Shahidi local factors are the ones of \S~\ref{separable}.}
   \item[(vii)] (Tempered $L$-functions). \emph{For $(l/k,\pi,\tau,\psi) \in \mathfrak{ls}(p)$ tempered, let $P_{\pi \times \tau}(t)$ be the polynomial with $P_{\pi \times \tau}(0) = 1$ and such that $P_{\pi \times \tau}(q_k^{-s})$ is the numerator of $\gamma(s,\pi \times \tau,\psi_l)$. Then}
   \begin{equation*}
      L(s,\pi \times \tau) = \frac{1}{P_{\pi \times \tau}(q_k^{-s})}
   \end{equation*}
   \emph{is holomorphic and non-zero for $\Re(s) > 0$.}
   \item[(viii)] (Tempered $\varepsilon$-factors). \emph{Let $(l/k,\pi,\tau,\psi) \in \mathfrak{ls}(p)$ be tempered, then}
   \begin{equation*}
      \varepsilon(s,\pi \times \tau,\psi_l) = \gamma(s,\pi \times \tau,\psi_l) \frac{L(s,\pi \times \tau)}{L(1-s,\tilde{\pi} \times \tilde{\tau})}.
   \end{equation*}
   \item[(ix)] (Twists by unramified characters). \emph{Let $(l/k,\pi,\tau,\psi) \in \mathfrak{ls}({\rm U}_M,{\rm Res}\,{\rm GL}_N,p)$. Then}
   \begin{align*}
      L(s+s_0,\pi \times \tau) &= L(s,\pi \times (\tau \left| \det(\cdot) \right|_l^{s_0})), \\
      \varepsilon(s+s_0,\pi \times \tau,\psi_l) &= \varepsilon(s,\pi \times (\tau \left| \det(\cdot) \right|_l^{s_0}),\psi_l).
   \end{align*}
   \item[(x)] (Langlands classification). \emph{Let $i = 1$ or $2$. For $1 \leq d \leq b_1$ and $1 \leq e \leq b_2$, let $(l/k,\pi_{1,d},\pi_{2,e},\psi) \in \mathfrak{ls}({\rm Res}\,{\rm GL}_{n_{1,d}},{\rm Res}\,{\rm GL}_{n_{2,e}},p)$ be quasi-tempered. Let ${\bf G}_{i,0}$ be of the same type as ${\bf G}_i$, and take $(l/k,\pi_{1,0},\pi_{2,0},\psi) \in \mathfrak{ls}({\bf G}_{1,0},{\bf G}_{2,0},p)$ to be tempered generic. Suppose that each $\pi_i$ is the Langlands quotient of}
   \begin{equation*}
      {\rm ind}_{P_i}^{G_i}(\pi_{i,b_i} \otimes \cdots \otimes \pi_{i,1} \otimes \pi_{i,0})
   \end{equation*}
   \emph{where ${\bf P}_i$ is the parabolic subgroup of ${\bf G}_i$ with Levi}
   \begin{equation*}
      {\bf M}_i = \prod_{j=1}^{b_i} {\rm Res}_{l/k}{\rm GL}_{n_{i,j}} \times {\bf G}_{i,0}.
   \end{equation*}
   \emph{Furthermore, we can assume that each $\pi_{i,j}$, for $j \neq 0$, is of the form $\tau_{i,j} \left| \det(\cdot) \right|_l^{r_{i,j}}$ with each $\tau_{i,j}$ tempered and $0 < r_{i,1} < \cdots < r_{i,b_i}$. Then}
   \begin{align*}
      L(s,\pi_1 \times \pi_2) &= \prod_{d,e} L(s,\rho_{1,d} \times \rho_{2,e}) \\
      \varepsilon(s,\pi_1 \times \pi_2,\psi_l) &= \prod_{d,e} \varepsilon(s,\rho_{1,d} \times \rho_{2,e},\psi_l),
   \end{align*}
   \emph{where for each $i = 1$ or $2$, $\rho_{i,j}$ is the $j$-th component of $\mathfrak{A}_i$ defined by}
   \begin{equation*}
      \mathfrak{A}_i = \left\{ \begin{array}{ll} (\pi_{i,b_i}, \ldots, \pi_{i,1}, \pi_{i,0}, \tilde{\pi}_{i,1}^\theta, \ldots, \tilde{\pi}_{i,b_i}^\theta) & \text{if } {\bf G}_i = {\rm U}_{N_i} \\ (\pi_{i,b_i}, \ldots, \pi_{i,1}, \pi_{i,0}) & \text{if } {\bf G}_i = {\rm Res}\,{\rm GL}_{N_i} \end{array} \right.
   \end{equation*}
\end{itemize}

We observe that our main result, Theorem~\ref{mainthm}, includes existence and uniqueness of an extended system of factors in $\mathfrak{ls}(p)$, for generic representations. In particular, the above ten axioms uniquely characterize the system of $\gamma$-factors, $L$-functions and root numbers, as pointed out in the introduction; we omit the proof of uniqueness. The proof of existence for extended factors is completed in \S~\ref{sec:RR}.

We also comment on some of the individual axioms: in Property~(iii) we relate to the $\gamma$-factors of Tate's thesis \cite{Ta1950}; the case of a separable algebra, treated in \S~\ref{separable}, is used in Property~(vi); and, Property~(vii) is settled in \cite{dCHeLo}. Note that the corresponding list of properties for Rankin-Selberg products of general linear groups may be found in \cite{HeLo2013a}.

\subsection{Additional properties of \texorpdfstring{$L$}{L}-functions and local factors}\label{additional} We obtain a local functional equation for $\gamma$-factors which is proved using only Properties (i)--(vi), as in \S~4.2 of \cite{Lo2015}.
\begin{itemize}
   \item[(xi)] (Local functional equation). \emph{Let $(l/k,\pi,\tau,\psi) \in \mathfrak{ls}(p)$, then}
   \begin{equation*}
      \gamma(s,\pi \times \tau,\psi_l) \gamma(1-s,\tilde{\pi} \times \tilde{\tau},\overline{\psi}_l) = 1.
   \end{equation*}
\end{itemize}

We define automorphic $L$-functions and root numbers for $(E/F,\pi,\tau,\psi,S) \in \mathcal{LS}(p)$ by setting
\begin{equation*}
   L(s,\pi \times \tau) = \prod_v L(s,\pi_v \times \tau_v) \text{ and } \varepsilon(s,\pi \times \tau) = \prod_v \varepsilon(s,\pi_v \times \tau_v,\psi_v).
\end{equation*}
They satisfy a functional equation, whose proof is completed in \S~\ref{proofmain}.
\begin{itemize}
   \item[(xii)] (Global functional equation). \emph{Let $(E/F,\pi,\tau,\psi,S) \in \mathcal{LS}(p)$, then}
   \begin{equation*}
      L(s,\pi \times \tau) = \varepsilon(s,\pi \times \tau) L(1-s,\tilde{\pi} \times \tilde{\tau}).
   \end{equation*}
\end{itemize}

The following property is Theorem~5.1 of \cite{GaLo2018} adapted to the case of unitary groups.
\begin{itemize}
   \item[(xiii)] (Stability). \emph{Let $(l/k,\pi_i,\tau_i,\psi) \in \mathfrak{ls}({\rm U}_N,{\rm Res}\,{\rm GL}_m,p)$, for $i = 1$ or $2$, be generic and such that $\omega_{\pi_1} = \omega_{\pi_2}$ and $\omega_{\tau_1} = \omega_{\tau_2}$. If $\eta : l^\times \rightarrow \mathbb{C}^\times$ is highly ramified, then}
   \begin{equation*}
      \gamma(s,\pi_1 \times (\tau_1 \cdot \eta),\psi_l) = \gamma(s,\pi_2 \times (\tau_2 \cdot \eta),\psi_l).
   \end{equation*}
\end{itemize}
We note that we also have the corresponding stability properties for local $L$-functions and root numbers
\begin{align*}
   L(s,\pi_1 \times (\tau_1 \cdot \eta)) &= L(s,\pi_2 \times (\tau_2 \cdot \eta)), \\
   \varepsilon(s,\pi_1 \times (\tau_1 \cdot \eta),\psi_l) &= \varepsilon(s,\pi_2 \times (\tau_2 \cdot \eta),\psi_l).
\end{align*}
Stability for local $L$-functions is a result of Shahidi \cite{Sh2000}. Stability for $\varepsilon$-factors follows by combining stability for $\gamma$-factors and $L$-functions via Property~(viii) above for tempered representations. The general case then follows from the Langlands classification, Property~(x).

\subsection{Stable form of local factors} The following lemma is very useful when establishing global base change. It is proved as indicated in the previous subsection; in particular, see Property~(xiii) for $\gamma$-factors and the comments following Property~(xiii) concerning $L$-functions and $\varepsilon$-factors.

\begin{lemma}\label{stableform}
Let $(l/k,\pi,\tau,\psi) \in \mathfrak{ls}({\rm U}_N,{\rm Res}\,{\rm GL}_m,p)$ be generic. Consider a quadruple $(l/k,\Pi,T,\psi) \in \mathfrak{ls}({\rm Res}\,{\rm GL}_N,{\rm Res}\,{\rm GL}_m,p)$, with $\Pi$ and $T$ principal series, such that $\omega_\tau = \omega_T$ and $\omega_\Pi$ is the character of $l^\times$ obtained from $\omega_\pi$ of $l^1$ via Hilbert's theorem 90. Then, whenever $\eta : l^\times \rightarrow \mathbb{C}^\times$ is highly ramified, we have that
\begin{align*}
   L(s,\pi \times (\tau \cdot \eta)) &= L(s,\Pi \times (T \cdot \eta)), \\
   \varepsilon(s,\pi \times (\tau \cdot \eta),\psi_l) &= \varepsilon(s,\Pi \times (T \cdot \eta),\psi_l), \\
   \gamma(s,\pi \times (\tau \cdot \eta),\psi_l) &= \gamma(s,\Pi \times (T \cdot \eta),\psi_l).
\end{align*}
\end{lemma}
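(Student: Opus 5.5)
The plan is to compare both sides with an auxiliary generic representation of ${\rm U}_N(k)$ that is built from principal series, so that multiplicativity turns its factors into products of abelian ones. We treat the $\gamma$-factor first and then deduce the $L$-functions and root numbers.

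\emph{Step 1: an auxiliary principal series.} Choose a generic principal series $\pi_0$ of ${\rm U}_N(k)$, induced from characters $\chi_1,\ldots,\chi_n$ of $l^\times$ (and a character $\chi_0$ of $l^1$ when $N$ is odd), with $\omega_{\pi_0} = \omega_\pi$. This is possible by adjusting $\chi_0$, or one of the $\chi_j$, since the central character is determined by these data. Then Property~(xiii), applied to $(\pi,\tau)$ and $(\pi_0,\tau)$, gives
\[
\gamma(s,\pi\times(\tau\cdot\eta),\psi_l)=\gamma(s,\pi_0\times(\tau\cdot\eta),\psi_l)
\]
for $\eta$ highly ramified.

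\emph{Step 2: multiplicativity.} Write $T$ as a principal series induced from characters $\mu_1,\ldots,\mu_m$. By Property~(xiii) for ${\rm Res}\,{\rm GL}$ (the Rankin--Selberg stability of \cite{HeLo2013a}), we may replace $\tau$ by $T$. By Property~(iv), $\gamma(s,\pi_0\times(T\cdot\eta),\psi_l)$ is then a product of Tate $\gamma$-factors $\gamma(s,\rho\mu_e\eta,\psi_l)$, where $\rho$ runs over $\chi_j$, $\chi_0$ (extended via Hilbert's theorem 90) and $\tilde\chi_j^\theta$. Similarly, $\gamma(s,\Pi\times(T\cdot\eta),\psi_l)$ is a product of $\gamma(s,\nu_i\mu_e\eta,\psi_l)$ over the inducing characters $\nu_i$ of $\Pi$.

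\emph{Step 3: stability for Tate factors.} For $\eta$ highly ramified, the classical stability $\gamma(s,\alpha\eta,\psi_l)=\gamma(s,\beta\eta,\psi_l)$ for characters with $\alpha\beta^{-1}$ of bounded conductor gives that $\prod_{i}\gamma(s,\alpha_i\eta,\psi_l)$ depends only on $\prod_i\alpha_i$ and the number of factors. It therefore suffices to check that the product of the $\rho$'s equals $\omega_\Pi$. This product is $\prod_j\chi_j\bar\chi_j^{-1}\cdot\chi_0=\mathfrak h$-extension of $\omega_{\pi_0}$, which equals $\omega_\Pi$ by hypothesis. Combining Steps 1--3 yields the $\gamma$-identity.

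\emph{Step 4: $L$-functions and root numbers.} For $\eta$ sufficiently ramified, all the twisted factors above have trivial $L$-functions. On the ${\rm GL}$ side this is immediate from Tate's thesis together with Property~(x). On the unitary side it is Shahidi's stability of $L$-functions \cite{Sh2000}, as recalled after Property~(xiii). So both $L$-functions equal $1$. The root numbers then coincide, first for tempered data via Property~(viii), and in general via the Langlands classification, Property~(x), which reduces to the tempered case with each factor again twisted by $\eta$.

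\emph{Main obstacle.} I expect the main obstacle to be uniformity in Step 4: making "highly ramified" independent of the finitely many tempered pieces arising from $\pi$. This is handled by taking $\eta$ more ramified than a bound depending on all the constituents.
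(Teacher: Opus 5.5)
Your proposal is correct and follows the paper's route. The paper invokes Property~(xiii) for $\gamma$-factors, Shahidi's stability for $L$-functions, and Properties~(viii) and (x) for root numbers; your auxiliary principal series $\pi_0$ with $\omega_{\pi_0}=\omega_\pi$, together with Properties~(iii) and (iv), just makes explicit the passage from ${\rm U}_N(k)$ to ${\rm GL}_N(l)$ that the paper leaves implicit.

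One slip in Step~3: the ${\rm GL}_1$ input is not the pairwise equality $\gamma(s,\alpha\eta,\psi_l)=\gamma(s,\beta\eta,\psi_l)$, which would make the product independent of $\prod_i\alpha_i$. It is the standard formula $\gamma(s,\alpha\eta,\psi_l)=\alpha(y_\eta)^{-1}\gamma(s,\eta,\psi_l)$, for a suitable $y_\eta\in l^\times$ depending only on $\eta$ and $\psi_l$ and valid once $\eta$ is highly ramified relative to $\alpha$; this is what makes the product depend only on $\prod_i\alpha_i$ and the number of factors.
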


\section{The converse theorem and base change for the unitary groups}\label{sec:BC}

We combine the Langlands-Shahidi method with the converse theorem and establish a preliminary functorial lift known as ``weak'' base change for globally generic representations.

\subsection{The converse theorem} Let us recall the converse theorem of Cogdell and Piatetski-Shapiro \cite{CoPS1994}. Over function fields, a first version goes back to Piatetski-Shapiro \cite{PS1976}; see also Appendix~B of \cite{La2002}. The applications to functoriality and base change, however, use a variant that incorporates twists by a character, given in \S~2 of \cite{CKPSS2001} over number fields. In the function field case, this twisted variant is established by H.~del Castillo in \S~3 of \cite{dCa}, as part of his Ph.D. thesis; see also Theorem~8.1 of \cite{Lo2009}.

Fix a global function field $E$, a finite set of places $S$ of $E$, a Gr\"o{\ss}en\-charakter $\eta : E^\times \backslash \mathbb{A}_E^\times \rightarrow \mathbb{C}^\times$ and an integer $N \geq 2$. Let $\mathcal{T}(S;\eta)$ be the set consisting of representations $\tau = \tau_0 \cdot \eta$ of ${\rm GL}_n(\mathbb{A}_E)$ such that: $1 \leq n \leq N-1$; and, $\tau_0$ is a cuspidal automorphic representation with $\tau_{0,v}$ unramified for every $v \in S$.

Let $\pi$ of ${\bf G}_1(\mathbb{A}_F)$ and $\tau$ of ${\bf G}_2(\mathbb{A}_F)$ be admissible representations whose $L$-function $L(s,\pi \times \tau)$ converges on some right half plane. We say $L(s,\pi \times \tau)$ is \emph{nice} if the following properties are satisfied:
\begin{itemize}
   \item[(i)] $L(s,\pi \times \tau)$ and $L(s,\tilde{\pi} \times \tilde{\tau})$ are polynomials in $\left\{ q^{-s}, q^s \right\}$.
   \item[(ii)] $L(s,\pi \times \tau) = \varepsilon(s,\pi \times \tau) L(1-s,\tilde{\pi} \times \tilde{\tau})$.
\end{itemize}

We note that condition~(i) implies that $L(s,\pi \times \tau)$ and $L(s,\tilde{\pi} \times \tilde{\tau})$ extend to entire functions on the whole complex plane, bounded in vertical strips.

\begin{theorem}[Converse theorem]\label{converse}
Let $\Pi = \otimes \, \Pi_v$ be an irreducible admissible representation of ${\rm GL}_N(\mathbb{A}_E)$ whose central character $\omega_\Pi$ is a Gr\"o{\ss}en\-charakter and whose $L$-function $L(s,\Pi) = \prod_v L(s,\Pi_v)$ is absolutely convergent in some right half-plane. Suppose that for every $\tau \in \mathcal{T}(S;\eta)$ the $L$-function $L(s,\Pi \times \tau)$ is nice. Then, there exists an automorphic representation $\Pi'$ of ${\rm GL}_N(\mathbb{A}_E)$ such that $\Pi_v \cong \Pi_v'$ for every $v \notin S$ at which $\Pi_v$ is unramified.
\end{theorem}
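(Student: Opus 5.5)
The plan follows the Cogdell--Piatetski-Shapiro strategy, adapted to function fields following the version in \S~3 of \cite{dCa} and Theorem~8.1 of \cite{Lo2009}. We realize $\Pi$ inside the space of functions on ${\rm GL}_N(\mathbb{A}_E)$ via Whittaker models and prove left invariance under a generating set of ${\rm GL}_N(E)$.

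First we fix a non-trivial character $\psi$ of $E\backslash\mathbb{A}_E$. Each $\Pi_v$ is generic; this is implicit in niceness, since the local factors are defined through Whittaker models, and it is how the theorem is applied later. For $\xi \in \otimes_v \mathcal{W}(\Pi_v,\psi_v)$ we form
\[ U_\xi(g)=\sum_{\gamma\in N_{N-1}(E)\backslash P_N(E)} W_\xi(\gamma g), \qquad V_\xi(g)=\sum_{\gamma\in N_{N-1}(E)\backslash P_N(E)} W_\xi(\alpha\gamma g), \]
where $P_N$ is the mirabolic subgroup and $\alpha$ is the long Weyl element. These sums converge absolutely: by absolute convergence of $L(s,\Pi)$, the local Whittaker functions have controlled growth, and over a function field the sums are locally finite after fixing an open compact level. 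By construction $U_\xi$ is left $P_N(E)$-invariant and $V_\xi$ is left invariant under $\alpha P_N(E)\alpha^{-1}$. Since these two subgroups generate ${\rm GL}_N(E)$, it suffices to show $U_\xi=V_\xi$ for all $\xi$.

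Next comes the twisting. For $\tau\in\mathcal{T}(S;\eta)$ on ${\rm GL}_n(\mathbb{A}_E)$ and cusp forms $\varphi\in\tau$, we unfold the integrals $\int_{{\rm GL}_n(E)\backslash{\rm GL}_n(\mathbb{A}_E)} U_\xi\bigl(\begin{smallmatrix} h&\\&I\end{smallmatrix}\bigr)\varphi(h)|\det h|^{s-\frac{N-n}{2}}\,dh$ and the analogous integral for $V_\xi$. The first unfolds to the Eulerian Jacobi--Piatetski-Shapiro--Shalika integrals $\Psi(s,W_\xi,W_\varphi)$. The second unfolds to $\widetilde{\Psi}(1-s,\rho(\alpha)\widetilde{W}_\xi,\widetilde{W}_\varphi)$. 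At unramified places the local integrals equal $L(s,\Pi_v\times\tau_v)$, and at ramified places they are polynomial multiples. The local functional equations of \cite{JPSS1983}, together with niceness (condition (ii)) and the global $\varepsilon$-factor, give equality of the two sides as rational functions in $q^{-s}$. Condition (i), polynomiality, together with convergence in half-planes, gives the analytic continuation needed to compare them as functions of $s$. This yields the integral identity for all $\tau\in\mathcal{T}(S;\eta)$. Here the twisted set $\mathcal{T}(S;\eta)$, rather than all cuspidal $\tau$, is handled by the standard trick: we choose $\xi_v$ at $v\in S$ so that $\Pi_v$ enters only through a vector invariant under a small compact subgroup, which makes the integrals insensitive to the ramification of $\tau_0$ at $S$. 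This is why we only recover $\Pi'_v\cong\Pi_v$ outside $S$.

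The last step is to pass from the twisted integral identities to $U_\xi=V_\xi$. We expand $U_\xi-V_\xi$ restricted to ${\rm GL}_{N-1}$ along unipotent radicals, proceeding inductively in $n=N-1,\dots,1$ as in Cogdell--Piatetski-Shapiro. Vanishing against all cusp forms on ${\rm GL}_n$, including Eisenstein contributions (treated by the same induction via constant terms), forces the relevant Fourier coefficients to vanish. This gives an automorphic form generating an automorphic representation $\Pi'$ whose local components agree with $\Pi_v$ at unramified $v\notin S$.

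I expect this last step to be the main obstacle. The issue is to ensure that the restriction to functions with prescribed $S$-level still determines the function, and in characteristic $p$ the compactness arguments and the finiteness over the function field replace the number-field growth estimates. I would follow \cite{dCa} here and cite its \S~3 rather than reprove it.
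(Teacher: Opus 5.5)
The paper does not prove this statement: it recalls the converse theorem of \cite{CoPS1994} and attributes the twisted function-field version to \S~3 of \cite{dCa} (see also Theorem~8.1 of \cite{Lo2009}). Deferring the decisive step to \cite{dCa}, as you do, is therefore the paper's route. The outline you build around that citation, however, has concrete problems, and the most substantive is your first step. Genericity of the $\Pi_v$ is not implicit in niceness. The theorem is stated for an arbitrary irreducible admissible $\Pi$. A non-generic $\Pi_v$ has no Whittaker model, and its factors $L(s,\Pi_v\times\tau_v)$ and $\varepsilon(s,\Pi_v\times\tau_v,\psi_v)$ are defined through the Langlands classification, i.e.\ through the standard module $\Xi_v$ of which $\Pi_v$ is the Langlands quotient. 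Nor is the theorem applied only to generic $\Pi$. In Proposition~\ref{weakBC} the unramified components of $\Pi^\flat$ are specified only by their Satake parameters, and nothing in the construction makes them generic: the unramified constituent of a reducible unramified principal series of ${\rm GL}_N(E_v)$ is not generic. As written, your argument therefore covers only generic $\Pi$. The standard remedy is to run the construction with $\xi\in\Xi=\otimes'\Xi_v$, which carries a nonzero Whittaker functional, and to recover $\Pi_v$ at unramified places as the spherical subquotient of $\Xi_v$.

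The construction itself is also misstated.
\begin{itemize}
\item \emph{The sum for $U_\xi$.} Since $W_\xi(u\gamma g)=W_\xi(\gamma g)$ for $u\in N_N(E)$, the sum must run over $N_N(E)\backslash P_N(E)\cong N_{N-1}(E)\backslash{\rm GL}_{N-1}(E)$. Over $N_{N-1}(E)\backslash P_N(E)$ each term would be repeated infinitely often.
\item \emph{The function $V_\xi$.} As you define it, $V_\xi$ is not well defined, because $W_\xi(\alpha u\gamma g)\neq W_\xi(\alpha\gamma g)$ in general for $u\in N_N(E)$. Moreover, reindexing $\gamma\mapsto\gamma p$ would give left $P_N(E)$-invariance, not $\alpha P_N(E)\alpha^{-1}$-invariance. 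The correct function is built from the contragredient. Put $\widetilde W_\xi(g)=W_\xi(w_N\,{}^tg^{-1})$, with $w_N$ the long Weyl element, and set $V_\xi(g)=\sum_{\gamma\in N_N(E)\backslash P_N(E)}\widetilde W_\xi(\gamma\,{}^tg^{-1})$. This is left invariant under ${}^tP_N(E)$, and its transpose-inverse structure is exactly what makes its twisted integral unfold to the dual integral at $1-s$.
\item \emph{The case $m<N-1$.} Here the integral of $U_\xi$ restricted to ${\rm GL}_m$ against a cusp form does not unfold to $\Psi(s,W_\xi,W_\varphi)$. One must first apply the projection $\mathbb{P}^N_m$, i.e.\ integrate over $Y_{N,m}(E)\backslash Y_{N,m}(\mathbb{A}_E)$ against $\psi^{-1}$, as for the usual Rankin--Selberg global integrals.
\item \emph{The role of $S$.} By definition of $\mathcal{T}(S;\eta)$, $\tau_0$ is unramified at $S$, so there is no ramification of $\tau_0$ at $S$ to neutralize. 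The vectors $\xi_v$, $v\in S$, are chosen with prescribed ($\eta$-twisted) invariance under a compact open subgroup, so that the restricted family $\mathcal{T}(S;\eta)$ suffices to detect the relevant functions. This constraint on $\xi$ at $S$ is why agreement is obtained only outside $S$.
\end{itemize}
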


\subsection{Base change for the unitary groups} Let $E/F$ be a separable quadratic extension of global function fields, and let $\mathbb{A}_F$ and $\mathbb{A}_E$ denote the ring of ad\`eles of $F$ and $E$, respectively. Throughout, we say that a place $v$ of $F$ is \emph{inert} if $E_v$ is a field; this includes the finitely many places where $E/F$ ramifies. By the \v{C}ebotar\"ev density theorem, the places of $F$ that split in $E$ and the inert places each have density $1/2$. Now, let ${\bf H}_N = {\rm Res}\,{\rm GL}_N$. The groups ${\rm U}_N$ and ${\bf H}_N$ are related via the following homomorphism of $L$-groups
\begin{equation}\label{BCmap}
   {\rm BC} : {}^L{\rm U}_N = {\rm GL}_N(\mathbb{C}) \rtimes \mathcal{W}_F \hookrightarrow {}^LH_N = {\rm GL}_N(\mathbb{C}) \times {\rm GL}_N(\mathbb{C}) \rtimes \mathcal{W}_F.
\end{equation}

We say that a globally generic cuspidal automorphic representation $\pi = \otimes' \pi_v$ of ${\rm U}_N(\mathbb{A}_F)$ has a base change lift $\Pi = \otimes' \Pi_v$ to ${\bf H}_N(\mathbb{A}_F) = {\rm GL}_N(\mathbb{A}_E)$, if at every place where $\pi_v$ is unramified, we have that
\begin{equation*}
   L(s,\pi_v) = L(s,\Pi_v).
\end{equation*}
This notion of a base change lift is sometimes referred to as a weak lift. The strong base change lift requires equality of $L$-functions and $\varepsilon$-factors at every place $v$ of $F$; we will establish the strong base change lift in \S~\ref{sec:strong}.

\begin{remark}
In order to be more precise, the base change map or Langlands functorial lift for the unitary groups obtained from \eqref{BCmap} is known as ``stable'' base change. There is also ``unstable'' base change. See for example the ``stable'' and ``labile'' base change discussion for ${\rm U}_2$ of \cite{Fl1982}.
\end{remark}

\subsection{Unramified base change}\label{unramifiedBC} Let $\pi = \otimes' \pi_v$ be a globally generic cuspidal automorphic representation of ${\rm U}_N(\mathbb{A}_F)$. Fix a place $v$ of $F$ that remains inert in $E$ and such that $\pi_v$ is unramified. Two unramified $L$-parameters $\phi_v : \mathcal{W}_{F_v} \rightarrow {}^L{\rm U}_N$ and $\Phi_v : \mathcal{W}_{F_v} \rightarrow {}^LH_N$ are connected via the homomorphism of $L$-groups
\begin{equation*}
\begin{tikzcd}[column sep=small]
   {}^L{\rm U}_N \arrow[rr] & & {}^LH_N \\
   & & \\
   & \mathcal{W}_{F_v} \arrow[uul, "\phi_v"] \arrow[uur, "\Phi_v"'] &
\end{tikzcd}
\end{equation*}
given by the base change map of \eqref{BCmap}.

Each $\pi_v$, being unramified, is of the form
\begin{equation}
   \pi_v \hookrightarrow \left\{ \begin{array}{ll} {\rm Ind}(\chi_{1,v} \otimes \cdots \otimes \chi_{n,v} \otimes \mu_v) & \text{if } N = 2n+1 \\ {\rm Ind}(\chi_{1,v} \otimes \cdots \otimes \chi_{n,v}) & \text{if } N = 2n \end{array} \right.,
\end{equation}
with $\chi_{1,v}$, \ldots, $\chi_{n,v}$ unramified characters of $E_v^\times$. Let $\varpi_v$ be a uniformizer and let
\begin{equation*}
   \alpha_{i,v} = \chi_{i,v}(\varpi_v), \ i = 1, \ldots, n.
\end{equation*}
Let ${\rm Frob}_v$ denote the Frobenius element of $\mathcal{W}_{F_v}$. We know that $\pi_v$ is parametrized by the conjugacy class in ${}^L{\rm U}_N$
\begin{equation*}
   \phi_v({\rm Frob}_v) = \left\{ \begin{array}{ll} {\rm diag}(\alpha_{1,v}^{\frac{1}{2}}, \ldots, \alpha_{n,v}^{\frac{1}{2}}, 1, \alpha_{n,v}^{-\frac{1}{2}}, \ldots, \alpha_{1,v}^{-\frac{1}{2}}) \rtimes w_{\theta,v} & \text{if } N = 2n+1 \\ {\rm diag}(\alpha_{1,v}^{\frac{1}{2}}, \ldots, \alpha_{n,v}^{\frac{1}{2}}, \alpha_{n,v}^{-\frac{1}{2}}, \ldots, \alpha_{1,v}^{-\frac{1}{2}}) \rtimes w_{\theta,v} & \text{if } N = 2n \end{array} \right..
\end{equation*}
Then, from the results of \cite{Mi2011}, the $L$-parameter $\Phi_v = {\rm BC} \circ \phi_v$ corresponds to a semisimple conjugacy class in ${\rm GL}_N(\mathbb{C})$ given by
\begin{equation*}
   \Phi_v({\rm Frob}_v) = \left\{ \begin{array}{ll} {\rm diag}(\alpha_{1,v}, \ldots, \alpha_{n,v}, 1, \alpha_{n,v}^{-1}, \ldots, \alpha_{1,v}^{-1}) & \text{if } N = 2n+1 \\ {\rm diag}(\alpha_{1,v}, \ldots, \alpha_{n,v}, \alpha_{n,v}^{-1}, \ldots, \alpha_{1,v}^{-1}) & \text{if } N = 2n \end{array} \right..
\end{equation*}
The resulting Satake parameter $\Phi_v$ then uniquely determines an unramified representation $\Pi_v$ of ${\rm Res}_{E_v/F_v}{\rm GL}_N(F_v) = {\rm GL}_N(E_v)$ of the form
\begin{equation}\label{unramifiedPi}
   \Pi_v \hookrightarrow \left\{ \begin{array}{ll} {\rm Ind}(\chi_{1,v} \otimes \cdots \otimes \chi_{n,v} \otimes 1 \otimes \chi_{n,v}^{-1} \otimes \cdots \otimes \chi_{1,v}^{-1}) & \text{if } N = 2n+1 \\ {\rm Ind}(\chi_{1,v} \otimes \cdots \otimes \chi_{n,v} \otimes \chi_{n,v}^{-1} \otimes \cdots \otimes \chi_{1,v}^{-1}) & \text{if } N = 2n \end{array} \right..
\end{equation}

To summarize, let $\hat{A}_v$ be the semisimple conjugacy class in ${\rm GL}_N(\mathbb{C})$ of the diagonal matrix in $\phi_v({\rm Frob}_v)$, obtained via the Satake parametrization. We have
\begin{equation*}
\begin{tikzcd}
   \pi_v \text{ of } {\rm U}_N(F_v) \arrow[r] \arrow[dd, dashed, "{\rm BC}"'] & \left\{ (\hat{A}_v, w_{\theta,v}) \right\} \text{ of } {\rm GL}_N(\mathbb{C}) \rtimes \mathcal{W}_{F_v} \arrow[dd, hook] \\
   & \\
   \Pi_v \text{ of } {\rm GL}_N(E_v) & \left\{ (\hat{A}_v, \hat{A}_v, w_{\theta,v}) \right\} \text{ of } {\rm GL}_N(\mathbb{C}) \times {\rm GL}_N(\mathbb{C}) \rtimes \mathcal{W}_{F_v} \arrow[l]
\end{tikzcd}
\end{equation*}
where we use the fact that there is a natural bijection between $w_{\theta,v}$-conjugacy classes of ${\rm GL}_N(\mathbb{C}) \times {\rm GL}_N(\mathbb{C})$ and conjugacy classes of ${\rm GL}_N(\mathbb{C})$.

\begin{definition}\label{unramifiedBCdef}
Let $v$ be a place of $F$ that remains inert in $E$. Let $\pi_v$ be an unramified representation with $L$-parameter $\phi_v$. We call the representation
\begin{equation*}
   {\rm BC}(\pi_v) = \Pi_v,
\end{equation*}
corresponding to $\Phi_v$ as in \eqref{unramifiedPi}, the unramified local Langlands lift or the unramified base change of $\pi_v$.
\end{definition}

Let $\Pi_v = {\rm BC}(\pi_v)$ be as in Definition~\ref{unramifiedBCdef}, and let $\tau_v$ be any generic representation of ${\rm GL}_m(E_v)$. Then
\begin{align*}
   \gamma(s,\pi_v \times \tau_v,\psi_v) &= \gamma(s,\Pi_v \times \tau_v,\psi_v) \\
   L(s,\pi_v \times \tau_v) &= L(s,\Pi_v \times \tau_v) \\
   \varepsilon(s,\pi_v \times \tau_v,\psi_v) &= \varepsilon(s,\Pi_v \times \tau_v,\psi_v).
\end{align*}

\subsection{Split base change}\label{splitBC} At split places $v$ of $F$, we are in the case of a separable algebra, as in \S~\ref{separable}. The local functorial lift of $\pi_v$ to ${\bf H}_N(F_v)$ is obtained from
\begin{equation*}
   {\rm U}_N(F_v) = {\rm GL}_N(F_v) \rightsquigarrow {\bf H}_N(F_v) = {\rm GL}_N(F_v) \times {\rm GL}_N(F_v).
\end{equation*}

\begin{definition}\label{splitBCdef}
Fix a place $v$ of $F$ such that $E_v = F_v \times F_v$. Let $\pi_v$ be a generic representation of ${\rm U}_N(F_v) \cong {\rm GL}_N(F_v)$. We call the representation
\begin{equation}
   {\rm BC}(\pi_v) = \pi_v \otimes \tilde{\pi}_v
\end{equation}
the split local Langlands lift or the split base change of $\pi_v$.
\end{definition}

Let $\tau_v = \tau_{1,v} \otimes \tau_{2,v}$ be any generic representation of ${\rm GL}_m(E_v) = {\rm GL}_m(F_v) \times {\rm GL}_m(F_v)$. Then, from \S~\ref{separable}, we have the following equality of local factors
\begin{align*}
   \gamma(s,\pi_v \times \tau_v,\psi_v) &= \gamma(s,\pi_v \times \tau_{1,v},\psi_v) \gamma(s,\tilde{\pi}_v \times \tau_{2,v},\psi_v) = \gamma(s,{\rm BC}(\pi_v) \times \tau_v,\psi_v) \\
   L(s,\pi_v \times \tau_v) &= L(s,\pi_v \times \tau_{1,v}) L(s,\tilde{\pi}_v \times \tau_{2,v}) = L(s,{\rm BC}(\pi_v) \times \tau_v) \\
   \varepsilon(s,\pi_v \times \tau_v,\psi_v) &= \varepsilon(s,\pi_v \times \tau_{1,v},\psi_v) \varepsilon(s,\tilde{\pi}_v \times \tau_{2,v},\psi_v) = \varepsilon(s,{\rm BC}(\pi_v) \times \tau_v,\psi_v).
\end{align*}

\subsection{Ramified base change}\label{ramifiedBC} At places $v$ of $F$ where the cuspidal automorphic representation $\pi$ of ${\rm U}_N(\mathbb{A}_F)$ may have ramification, we can use stability of $\gamma$-factors. At this point, we do not have a unique ramified base change, even after twisting by a highly ramified character. However, we will fully establish a unique local Langlands lift, or local base change, in \S~\ref{sec:LLF}.

\begin{definition}\label{ramifiedBCdef}
Let $\chi_{1,v}, \ldots, \chi_{n,v}$ be characters of $E_v^\times$. Let $\mu_v$ be a character of $E_v^1$, which we extend to one of $E_v^\times$ via Hilbert's theorem 90. Assume that the generic constituent
\begin{equation*}
   \Pi_v \hookrightarrow \left\{ \begin{array}{ll} {\rm Ind}(\chi_{1,v} \otimes \cdots \otimes \chi_{n,v} \otimes \mu_v \otimes \chi_{n,v}^{-1} \otimes \cdots \otimes \chi_{1,v}^{-1}) & \text{if } N = 2n+1 \\ {\rm Ind}(\chi_{1,v} \otimes \cdots \otimes \chi_{n,v} \otimes \chi_{n,v}^{-1} \otimes \cdots \otimes \chi_{1,v}^{-1}) & \text{if } N = 2n \end{array} \right.
\end{equation*}
has central character $\omega_{\Pi_v}$ obtained from $\omega_{\pi_v}$ via Hilbert's theorem 90. Then $\Pi_v$ is called a ramified local Langlands lift or a ramified base change of $\pi_v$.
\end{definition}

We no longer have equality of local factors for every $\tau_v$ of ${\rm GL}_m(E_v)$. However, from Lemma~\ref{stableform}, whenever $\eta_v : E_v^\times \rightarrow \mathbb{C}^\times$ is a highly ramified character, we have that
\begin{align*}
   \gamma(s,\pi_v \times (\tau_v \cdot \eta_v),\psi_v) &= \gamma(s,\Pi_v \times (\tau_v \cdot \eta_v),\psi_v) \\
   L(s,\pi_v \times (\tau_v \cdot \eta_v)) &= L(s,\Pi_v \times (\tau_v \cdot \eta_v)) \\
   \varepsilon(s,\pi_v \times (\tau_v \cdot \eta_v),\psi_v) &= \varepsilon(s,\Pi_v \times (\tau_v \cdot \eta_v),\psi_v).
\end{align*}

\subsection{Weak base change} We now arrive at the preliminary weak base change lift for the unitary groups.

\begin{proposition}\label{weakBC}
Let $\pi = \otimes' \pi_v$ be a globally generic cuspidal automorphic representation of ${\rm U}_N(\mathbb{A}_F)$. There exists a unique globally generic automorphic representation
\begin{equation*}
   {\rm BC}(\pi) = \Pi
\end{equation*}
of ${\rm Res}_{E/F}{\rm GL}_N(\mathbb{A}_F) = {\rm GL}_N(\mathbb{A}_E)$, which is a weak base change lift of $\pi$.
\end{proposition}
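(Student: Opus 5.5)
We follow the converse-theorem strategy of Cogdell, Kim, Piatetski-Shapiro and Shahidi, in the form adapted to function fields in \cite{Lo2009}. Let $S$ be a finite set of places of $F$ containing every place where $E/F$, $\pi$ or $\psi$ ramifies, and let $S_E$ be the set of places of $E$ above $S$. We build a candidate $\Pi = \otimes' \Pi_v$ of ${\rm GL}_N(\mathbb{A}_E)$ place by place. At inert $v \notin S$ we take $\Pi_v = {\rm BC}(\pi_v)$ as in Definition~\ref{unramifiedBCdef}. At split $v$ we take $\Pi_v = \pi_v \otimes \tilde{\pi}_v$ as in Definition~\ref{splitBCdef}. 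At the remaining $v \in S$ we take any ramified base change $\Pi_v$ in the sense of Definition~\ref{ramifiedBCdef}. The central character of $\Pi$ is then the Gr\"o{\ss}encharakter obtained from $\omega_\pi$ via Hilbert's theorem 90 \eqref{hilbert90}. Absolute convergence of $L(s,\Pi)$ in a right half plane follows from the usual bounds on the Satake parameters of $\pi_v$.

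Next we fix a Gr\"o{\ss}encharakter $\eta$ of $E^\times \backslash \mathbb{A}_E^\times$ that is highly ramified at every place of $S_E$. Such an $\eta$ exists by approximation for idele class characters of function fields; we may need to allow a finite extra set of auxiliary ramification outside $S_E$, which we absorb into the choice of $S$. For $\tau = \tau_0 \cdot \eta \in \mathcal{T}(S_E;\eta)$ we compare $L(s,\Pi \times \tau)$ with the Langlands--Shahidi $L$-function $L(s,\pi \times \tau)$ of $(E/F,\pi,\tau,\psi,S) \in \mathcal{LS}({\rm U}_N,{\rm Res}\,{\rm GL}_m,p)$. The local factors agree at every place: at unramified inert places by \S~\ref{unramifiedBC}, at split places by \S~\ref{splitBC}, and at places in $S$ by Lemma~\ref{stableform}, since $\tau_{0,v}$ is unramified there and $\eta_v$ is highly ramified. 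The same holds for the contragredients and for the $\varepsilon$-factors. Hence
\[
L(s,\Pi\times\tau)=L(s,\pi\times\tau), \qquad \varepsilon(s,\Pi\times\tau)=\varepsilon(s,\pi\times\tau).
\]

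The main step is to show that $L(s,\pi\times\tau)$ is nice. We invoke \cite{Lom19}, which gives rationality and the functional equation for global Langlands--Shahidi $L$-functions over function fields. It also gives that, after twisting by a suitably ramified character, these $L$-functions become polynomials in $q^{-s}$ and $q^s$, and the twist by $\eta$ provides exactly this. The global functional equation follows from Property~(vi) of \S~\ref{axioms}. On the finitely many places of $S$ we use (vii)--(viii) and the identity $\gamma\,L(1-s)/L(s) = \varepsilon$; since we are in the $\mathcal{LS}({\rm U}_N,{\rm Res}\,{\rm GL}_m)$ case, where local factors exist by \cite{LomLS}, this yields Property~(xii). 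I expect the main obstacle to be entireness, or polynomiality, for every $\tau\in\mathcal{T}(S_E;\eta)$. This requires the poles of Eisenstein series to disappear after the highly ramified twist, which is what \cite{Lom19} supplies. Theorem~\ref{converse} then produces an automorphic $\Pi'$ with $\Pi'_v\cong\Pi_v$ for all $v\notin S$ at which $\Pi_v$ is unramified; in particular $\Pi'$ is a weak base change lift of $\pi$.

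To obtain a globally generic representation we use the Langlands classification of automorphic representations of ${\rm GL}_N$ via Eisenstein series. We replace $\Pi'$ by the generic isobaric representation $\boxplus\,\Pi_i$ with the same cuspidal support, which has the same unramified components almost everywhere. Uniqueness follows from strong multiplicity one for isobaric automorphic representations of ${\rm GL}_N(\mathbb{A}_E)$, due to Jacquet--Shalika: two such representations agreeing at almost all places coincide. Here the split places alone already have density $1/2$, but agreement at almost all places is what is used.
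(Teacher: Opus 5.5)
Your proposal is correct and follows the paper's proof essentially step for step. You use the same place-by-place candidate (unramified, split and ramified base change, with central character obtained via Hilbert~90) and a Gr\"o{\ss}en\-charakter $\eta$ highly ramified on $S$, so that Lemma~\ref{stableform} matches the local factors there. From there you proceed as the paper does: niceness of the twisted Langlands--Shahidi $L$-functions via \cite{Lom19} and Property~(vi), the converse theorem (Theorem~\ref{converse}), passage to the generic isobaric representation, and uniqueness from Jacquet--Shalika strong multiplicity one.

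The only cosmetic difference is that you absorb any auxiliary ramification of $\eta$ into $S$, whereas the paper leaves it outside $S$ and enlarges only to $S_\tau$ for the global datum. In either version $\pi_v$ is unramified at such places, so the local comparison there comes from \S~\ref{unramifiedBC} and \S~\ref{splitBC}, which hold for arbitrary $\tau_v$, and not from Lemma~\ref{stableform}.
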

\begin{proof}
Let $\Pi_v = {\rm BC}(\pi_v)$ be the local base change of Definitions~\ref{unramifiedBCdef}, \ref{splitBCdef} and \ref{ramifiedBCdef}, accordingly. Consider the irreducible admissible representation
\begin{equation*}
   \Pi^\flat = \otimes' \, \Pi_v
\end{equation*}
of ${\rm GL}_N(\mathbb{A}_E)$ whose central character $\omega_{\Pi^\flat}$ has $\omega_{\Pi_v}$ obtained from $\omega_{\pi_v}$ via Hilbert's theorem 90 at every place $v$ of $F$. By construction, $\omega_{\Pi^\flat}$ is invariant under $E^\times$.

Fix a non-trivial character $\psi = \otimes \psi_v : F \backslash \mathbb{A}_F \rightarrow \mathbb{C}^\times$. Let $S$ be a non-empty finite set of places of $F$ such that $E/F$, $\pi$ and $\psi$ are unramified outside of $S$. We abuse notation and identify $S$ with the finite set of places of $E$ lying above the places $v \in S$. Then, we have an equality of partial $L$-functions
\begin{equation*}
   L^S(s,\Pi^\flat) = L^S(s,\pi \times 1).
\end{equation*}
Hence, $L^S(s,\Pi^\flat)$ converges absolutely on a right half plane; and so does $L(s,\Pi^\flat)$.

Choose a Gr\"o{\ss}en\-charakter $\eta = \otimes \eta_v : E^\times \backslash \mathbb{A}_E^\times \rightarrow \mathbb{C}^\times$ such that $\eta_v$ is highly ramified for every $v \in S$, as in Lemma~\ref{stableform} and Proposition~4.1 of \cite{Lom19}. Let $\tau$ be a cuspidal automorphic representation of ${\rm GL}_m(\mathbb{A}_E)$, $1 \leq m \leq N-1$, such that $\tau_v$ is unramified for every $v \in S$. Then, letting $\tau' = \tau \cdot \eta \in \mathcal{T}(S;\eta)$ and taking a finite set of places $S_\tau \supseteq S$ of $F$ such that $\tau'$ is unramified outside of $S_\tau$, we have that $(E/F,\pi,\tau',\psi,S_\tau) \in \mathcal{LS}({\rm U}_N,{\rm Res}\,{\rm GL}_m,p)$. We have seen in \S\S~\ref{unramifiedBC}, \ref{splitBC} and \ref{ramifiedBC} that the following equality of local factors holds in every case:
\begin{align*}
   L(s,\pi_v \times \tau_v') &= L(s,\Pi_v \times \tau_v') \\
   \varepsilon(s,\pi_v \times \tau_v',\psi_v) &= \varepsilon(s,\Pi_v \times \tau_v',\psi_v).
\end{align*}
With this choice of $\eta$, we know that the Langlands-Shahidi $L$-functions $L(s,\pi \times \tau')$ are polynomials in $\left\{ q^s, q^{-s} \right\}$. They also satisfy the global functional equation, Property~(vi) of \S~\ref{axioms}. Thus, they are nice. Then, since
\begin{equation*}
   L(s,\Pi^\flat \times \tau') = L(s,\pi \times \tau') \text{ and } \varepsilon(s,\Pi^\flat \times \tau') = \varepsilon(s,\pi \times \tau'),
\end{equation*}
we can conclude that the $L$-functions $L(s,\Pi^\flat \times \tau')$ are nice, as $\tau'$ ranges through the set $\mathcal{T}(S;\eta)$. From the converse theorem (Theorem~\ref{converse}), there exists an automorphic representation $\Pi'$ of ${\rm GL}_N(\mathbb{A}_E)$ such that $\Pi_v \cong \Pi_v'$ for all $v \notin S$. Then $\Pi'$ gives a weak base change.

Now, from \cite{La1979}, the automorphic representation $\Pi'$ arises as a subquotient of a globally induced representation
\begin{equation}\label{globalinduced}
   {\rm Ind}(\Pi_1 \otimes \cdots \otimes \Pi_d),
\end{equation}
with each $\Pi_i$ a cuspidal automorphic representation of ${\rm GL}_{n_i}(\mathbb{A}_E)$, $N = n_1 + \cdots + n_d$. Since every $\Pi_i$ is cuspidal, they are globally generic. The results on the classification of automorphic representations for general linear groups \cite{JaSh1981} show that there exists a unique generic subquotient of \eqref{globalinduced}, which we denote (abusing notation) by
\begin{equation*}
   \Pi = \Pi_1 \boxplus \cdots \boxplus \Pi_d.
\end{equation*}
It is this automorphic representation $\Pi$ which is our desired base change, i.e.,
\begin{equation*}
   {\rm BC}(\pi) = \Pi.
\end{equation*}
Indeed, it has the property that at every place $w$ of $E$ where $\Pi_w$ is unramified, it is generic. Hence, at places where $\pi_v$ is unramified and $w = v$ remains inert, $\Pi_w$ is given by the unique generic subquotient of a principal series representation and $\Pi_w$ agrees with the local base change lift of \S~\ref{unramifiedBC}. At split places it also agrees with that of \S~\ref{splitBC} at almost all places. It thus agrees with $\Pi'$ at almost all places and is itself a weak base change lift. Furthermore, by strong multiplicity one \cite{PS1979} and its extension to isobaric representations \cite{JaSh1981}, any two globally generic automorphic representations of ${\rm GL}_N(\mathbb{A}_E)$ that agree at almost every place are equal. Hence $\Pi$ is uniquely determined.
\end{proof}

\section{Uniqueness of local base change and a global to local result}\label{sec:local}

In this section, we define local base change and show that it is unique when it exists; we also prove that global base change is compatible with local base change. Throughout, we fix a quadratic extension $l/k$ of non-archimedean local fields of positive characteristic. Given any general linear group ${\rm GL}_m(l)$, we let $\nu$ denote the unramified character obtained via the determinant, i.e., $\nu = \left| \det(\cdot) \right|_l$. Globally, we let $E/F$ denote a separable quadratic extension of function fields.

\begin{definition}\label{localBCdef}
Let $\pi$ be a generic representation of ${\rm U}_N(k)$. Then, we say that a generic representation $\Pi$ of ${\rm GL}_N(l)$ is \emph{a} local base change lift \emph{of} $\pi$ if for every supercuspidal representation $\tau$ of ${\rm GL}_m(l)$ we have that
\begin{equation*}
   \gamma(s,\pi \times \tau,\psi_l) = \gamma(s,\Pi \times \tau,\psi_l).
\end{equation*}
\end{definition}

We also refer to such a $\Pi$ as a local Langlands lift, or local Langlands functorial lift, of $\pi$; by Lemma~\ref{uniqueBC} below, it is unique when it exists.

\subsection{Uniqueness of the local base change lift} The equality of $\gamma$-factors in the previous definition extends to twists by a general unitary generic representation $\tau$ of ${\rm GL}_m(l)$, as we show in the next lemma. For this, the classification of \cite{Ta1986} is very useful. It allows us to write
\begin{equation}\label{tadic}
   \tau = {\rm Ind}(\delta_1\nu^{\lambda_1} \otimes \cdots \otimes \delta_d\nu^{\lambda_d} \otimes \delta_{d+1} \otimes \cdots \otimes \delta_{d+c} \otimes \delta_d\nu^{-\lambda_d} \otimes \cdots \otimes \delta_1\nu^{-\lambda_1}),
\end{equation}
where the $\delta_i$'s are unitary discrete series representations of ${\rm GL}_{n_i}(l)$ and $0 < \lambda_d \leq \cdots \leq \lambda_1 < 1/2$.

Furthermore, from the Zelevinsky classification \cite{Ze1980}, we know that every unitary discrete series representation $\delta$ of ${\rm GL}_m(l)$ is obtained from a segment of the form
\begin{equation*}
   \Delta = \left[ \rho\nu^{-\frac{t-1}{2}}, \rho\nu^{\frac{t-1}{2}} \right],
\end{equation*}
where $\rho$ is a supercuspidal representation of ${\rm GL}_e(l)$, $e|m$, and $t$ is a positive integer. The representation $\delta$ is precisely the generic constituent of
\begin{equation}\label{segment}
   {\rm Ind}(\rho\nu^{-\frac{t-1}{2}} \otimes \cdots \otimes \rho\nu^{\frac{t-1}{2}}).
\end{equation}

An important result of Henniart \cite{He1993} allows us to uniquely characterize the local Langlands functorial lift by the condition that it preserves local factors.

\begin{lemma}\label{uniqueBC}
Let $\pi$ be a generic representation of ${\rm U}_N(k)$ and suppose there exists $\Pi$, a local base change lift to ${\rm GL}_N(l)$. Then, for every unitary generic representation $\tau$ of ${\rm GL}_m(l)$ we have that
\begin{equation*}
   \gamma(s,\pi \times \tau,\psi_l) = \gamma(s,\Pi \times \tau,\psi_l).
\end{equation*}
Furthermore, such a local base change lift $\Pi$ is unique.
\end{lemma}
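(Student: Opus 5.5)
The plan is to bootstrap from supercuspidal twists to all unitary generic twists using multiplicativity of $\gamma$-factors on both sides. By Definition~\ref{localBCdef}, we have $\gamma(s,\pi \times \rho,\psi_l) = \gamma(s,\Pi \times \rho,\psi_l)$ for every supercuspidal $\rho$ of any ${\rm GL}_e(l)$. We first extend this to twists by $\rho\nu^{x}$ for real (indeed complex) $x$: $\rho\nu^x$ is again supercuspidal, so the equality holds directly. Alternatively, Property~(ix) together with the corresponding unramified twisting for $\gamma$-factors gives $\gamma(s,\pi\times\rho\nu^x,\psi_l)=\gamma(s+x,\pi\times\rho,\psi_l)$, and likewise for $\Pi$ via Rankin--Selberg theory.

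Next we pass to discrete series. By the Zelevinsky classification, a unitary discrete series $\delta$ of ${\rm GL}_m(l)$ is the generic constituent of \eqref{segment}. We apply multiplicativity, Property~(iv), with ${\bf G}_1={\rm U}_N$ (trivially decomposed, $b_1=0$) and ${\bf G}_2={\rm Res}\,{\rm GL}_m$ with Levi $\prod {\rm Res}\,{\rm GL}_e$. Since ${\bf G}_2$ is of ${\rm GL}$-type, $\mathfrak{A}_2$ has no $\theta$-contragredient terms, and we obtain
\begin{equation*}
\gamma(s,\pi\times\delta,\psi_l)=\prod_{j}\gamma(s,\pi\times\rho\nu^{-\frac{t-1}{2}+j},\psi_l).
\end{equation*}
The same factorization holds for $\Pi\times\delta$ by multiplicativity of Rankin--Selberg $\gamma$-factors (the case $\mathfrak{ls}({\rm Res}\,{\rm GL}_N,{\rm Res}\,{\rm GL}_m,p)$). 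Comparing factor by factor gives equality for $\delta$. Then, writing a unitary generic $\tau$ via Tadi\'c's form \eqref{tadic}, $\tau$ is the (irreducible, generic) full induced representation from $\delta_i\nu^{\pm\lambda_i}$ and $\delta_{d+j}$. A second application of multiplicativity on both sides reduces to twists $\gamma(s,\pi\times\delta_i\nu^{\pm\lambda_i},\psi_l)$. Each $\delta_i\nu^{\lambda}$ is in turn the generic constituent of an induced representation from supercuspidals $\rho\nu^{\lambda+\cdot}$, so the same factorization applies. Alternatively, and more simply, we decompose $\tau$ all the way down to its supercuspidal support in a single step, since multiplicativity only requires $\tau$ to be the generic constituent.

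For uniqueness, suppose $\Pi_1,\Pi_2$ are both local base change lifts of $\pi$. Then $\gamma(s,\Pi_1\times\tau,\psi_l)=\gamma(s,\Pi_2\times\tau,\psi_l)$ for all supercuspidal $\tau$ of ${\rm GL}_m(l)$, $1\le m\le N-1$ (indeed all $m$). Henniart's theorem \cite{He1993} says that generic representations of ${\rm GL}_N(l)$ are determined by their Rankin--Selberg $\gamma$-factors against supercuspidals of ${\rm GL}_m(l)$, $m\le N-1$, once the central characters agree. Agreement of central characters follows from the case $m=1$ by taking highly ramified characters and applying Property~(xiii) and Lemma~\ref{stableform}. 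Since the generic representation is determined by its supercuspidal support together with the generic constituent, we conclude $\Pi_1\cong\Pi_2$.

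The main obstacle is ensuring that multiplicativity applies in the needed generality on the unitary side: Property~(iv) must hold for the generic constituent even when the induced representation is reducible (segments), and for non-unitary twists $\rho\nu^x$. I expect to handle this as stated in Property~(iv), which already concerns generic constituents. A secondary point is matching the precise form of Henniart's characterization, which may need $\gamma$-factors against generic rather than supercuspidal twists; the first part of the lemma supplies exactly those.
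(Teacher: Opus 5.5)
Your proof of the $\gamma$-factor identity follows the paper. Multiplicativity on both sides (Property~(iv) for ${\rm U}_N\times{\rm Res}\,{\rm GL}_m$, Rankin--Selberg multiplicativity for ${\rm GL}_N\times{\rm GL}_m$), applied to Tadi\'c's form \eqref{tadic} and to Zelevinsky segments, reduces everything to supercuspidal twists, and uniqueness is then taken from Theorem~1.1 of \cite{He1993}. Your observation that $\rho\nu^{x}$ is itself supercuspidal, so Definition~\ref{localBCdef} applies to it directly, is a small improvement. It avoids the shift identity $\gamma(s,\pi\times\rho\nu^{x},\psi_l)=\gamma(s+x,\pi\times\rho,\psi_l)$, which the paper uses tacitly even though Property~(ix) is stated only for $L$ and $\varepsilon$.

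\textbf{The central-character step.} Your justification that $\omega_{\Pi_1}=\omega_{\Pi_2}$ does not work as written.
\begin{itemize}
\item Property~(xiii) and Lemma~\ref{stableform} both \emph{assume} equal central characters and \emph{conclude} equal $\gamma$-factors after a highly ramified twist. They have no converse, so they cannot recover a central character from $\gamma$-factors.
\item They also concern a unitary group paired with ${\rm GL}_m$, not two representations of ${\rm GL}_N(l)$.
\end{itemize}

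\textbf{How to repair it.} Use the explicit Jacquet--Shalika stable form for ${\rm GL}_N\times{\rm GL}_1$: for $\eta$ highly ramified,
$\gamma(s,\Pi_i\times\eta,\psi_l)=\gamma(s,\eta,\psi_l)^{N-1}\gamma(s,\omega_{\Pi_i}\eta,\psi_l)$.
Combine it with Deligne's formula $\varepsilon(s,\omega\eta,\psi_l)=\omega(c_\eta)^{-1}\varepsilon(s,\eta,\psi_l)$. This holds when the conductor of $\eta$ is large relative to that of $\omega$, where $\eta(1+x)=\psi_l(c_\eta x)$ for $x$ near $0$. As $\eta$ runs over two consecutive large conductors, $c_\eta$ fills two consecutive valuation shells of $l^\times$. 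This forces $\omega_{\Pi_1}\omega_{\Pi_2}^{-1}=1$.

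\textbf{An alternative that avoids Henniart.} Definition~\ref{localBCdef} allows supercuspidal twists of \emph{every} size $m$, so you can bypass both the central character and \cite{He1993}.
\begin{itemize}
\item Fix a unitary supercuspidal $\rho$, and let $\lambda$ run over the exponents of $\Pi_i$ in the inertial orbit of $\rho$, as in the proof of Proposition~\ref{scTempered}.
\item The zero/pole divisor of $\gamma(s,\Pi_i\times\tilde\rho,\psi_l)$ is then the formal sum of the points $-\lambda$ minus the points $1-\lambda$, taken modulo the purely imaginary period.
\item Since $1$ has infinite order modulo that period, this divisor determines the multiset of $\lambda$'s even after cancellation.
\item Hence $\Pi_1$ and $\Pi_2$ have the same supercuspidal support, and, being generic, they are isomorphic.
\end{itemize}

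The paper itself cites \cite{He1993} without discussing central characters.
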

\begin{proof}
Given a unitary generic representation $\tau$ of ${\rm GL}_m(l)$, write $\tau$ in the form given by \eqref{tadic}. Multiplicativity of $\gamma$-factors, i.e., \S~\ref{axioms} Property~(iv), gives
\begin{align*}
   \gamma(s,\pi \times \tau,\psi_l) = & \prod_{i=1}^c \gamma(s,\pi \times \delta_{d+i},\psi_l) \\
   & \prod_{j=1}^d \gamma(s+\lambda_j,\pi \times \delta_j,\psi_l) \gamma(s-\lambda_j,\pi \times \delta_j,\psi_l).
\end{align*}
Similarly, multiplicativity for Rankin-Selberg $\gamma$-factors \cite{JPSS1983} (see also \cite{HeLo2013a}) gives
\begin{align*}
   \gamma(s,\Pi \times \tau,\psi_l) = & \prod_{i=1}^c \gamma(s,\Pi \times \delta_{d+i},\psi_l) \\
   & \prod_{j=1}^d \gamma(s+\lambda_j,\Pi \times \delta_j,\psi_l) \gamma(s-\lambda_j,\Pi \times \delta_j,\psi_l).
\end{align*}
In this way, we reduce the problem to proving the relation
\begin{equation*}
   \gamma(s,\pi \times \delta,\psi_l) = \gamma(s,\Pi \times \delta,\psi_l)
\end{equation*}
for discrete series representations $\delta$ of ${\rm GL}_m(l)$.

Now, we write the representation $\delta$ as the generic constituent of
\begin{equation*}
   {\rm Ind}(\rho\nu^{-\frac{t-1}{2}} \otimes \cdots \otimes \rho\nu^{\frac{t-1}{2}}),
\end{equation*}
as in \eqref{segment}. Then, using the multiplicativity property of $\gamma$-factors, we obtain
\begin{align*}
   \gamma(s,\pi \times \delta,\psi_l) &= \prod_{a=0}^{t-1} \gamma(s - \frac{t-1}{2} + a,\pi \times \rho,\psi_l) \\
   &= \prod_{a=0}^{t-1} \gamma(s - \frac{t-1}{2} + a,\Pi \times \rho,\psi_l) \\
   &= \gamma(s,\Pi \times \delta,\psi_l).
\end{align*}
This shows that $\Pi$ satisfies the desired relation involving $\gamma$-factors. That $\Pi$ is unique then follows from Theorem~1.1 of \cite{He1993}.
\end{proof}

\begin{lemma}\label{localLfactors}
Let $\pi$ be a generic representation of ${\rm U}_N(k)$ and suppose it has a unitary local Langlands functorial lift $\Pi$ of ${\rm GL}_N(l)$. Then, for every unitary generic representation $\tau$ of ${\rm GL}_m(l)$ we have that
\begin{align*}
   L(s,\pi \times \tau) &= L(s,\Pi \times \tau) \\
   \varepsilon(s,\pi \times \tau,\psi_l) &= \varepsilon(s,\Pi \times \tau,\psi_l).
\end{align*}
\end{lemma}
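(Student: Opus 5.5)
We reduce to the tempered case and then to supercuspidal and discrete series pieces, where $L$-functions are read off from the $\gamma$-factors of Lemma~\ref{uniqueBC}. Both the unitary-group factors (Properties (vii), (viii), (x) of \S~\ref{axioms}) and the Rankin--Selberg factors for ${\rm GL}_N(l)\times{\rm GL}_m(l)$ are determined by $\gamma$-factors through the same recipe: for tempered data, $L$ is the inverse of the normalized numerator polynomial of $\gamma$, and $\varepsilon = \gamma\, L(s)/L(1-s)$. The Langlands classification then reduces everything to the tempered case.

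The first step treats $\pi$ and $\tau$ tempered. Lemma~\ref{uniqueBC} gives $\gamma(s,\pi\times\tau,\psi_l)=\gamma(s,\Pi\times\tau,\psi_l)$. I will first check that $\Pi$ is tempered. By hypothesis $\Pi$ is unitary and generic, so by \eqref{tadic} it has the form ${\rm Ind}(\delta_i\nu^{\lambda_i}\otimes\cdots)$ with $|\lambda_i|<1/2$. I then aim to show that all $\lambda_i=0$ by comparing poles. By Property~(vii), $\gamma(s,\pi\times\tilde\delta,\psi_l)$ has a numerator whose zeros lie on $\Re(s)\le 0$, and dually its denominator has zeros on $\Re(s)\ge 1$. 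A nonzero exponent $\lambda_i$ would, for suitable choice of $\delta=\delta_i$, produce a factor $L(s+\lambda_i,\delta_i\times\tilde\delta_i)$ with a pole at $\Re(s)=-\lambda_i$. This pole lies strictly inside $-1/2<\Re(s)<1/2$ and cannot cancel against the opposite factor. This step is delicate, and I expect it to be the \emph{main obstacle}: one has to rule out cancellations between numerator and denominator of the Rankin--Selberg $\gamma$-factor. I would handle it by choosing $\delta$ to be the discrete series with the largest $|\lambda_i|$ and tracking the pole of $L(s,\Pi\times\tilde\delta)$ at $s=-\lambda_i$ versus $L(1-s,\tilde\Pi\times\delta)$.

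If that comparison proves too cumbersome, the fallback is to restrict the statement's use to tempered $\Pi$, as produced later via Proposition~\ref{scTempered}. Once $\Pi$ and $\tau$ are tempered, $L(s,\Pi\times\tau)$ is the inverse of the normalized numerator of $\gamma(s,\Pi\times\tau,\psi_l)$; this is the known tempered property for ${\rm GL}$, \cite{HeLo2013a}. Property~(vii) states the same for $L(s,\pi\times\tau)$, so the two $L$-functions agree. Applying this also to $(\tilde\pi,\tilde\tau)$ and $(\tilde\Pi,\tilde\tau)$, where the $\gamma$-equality follows from Property~(xi) and the ${\rm GL}$ local functional equation, Property~(viii) gives equality of $\varepsilon$-factors. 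Here we use that $\tilde\Pi$ is the lift of $\tilde\pi$, which follows from the same local functional equations.

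The second step handles a general unitary generic $\tau$, written as in \eqref{tadic}. For both sides, Property~(x) and its ${\rm GL}$ analogue express $L$ and $\varepsilon$ as products over the tempered pieces $\delta_{d+i}$ and over $\delta_j\nu^{\pm\lambda_j}$. Property~(ix) moves each $\nu^{\pm\lambda_j}$ into a shift $s\pm\lambda_j$, and the same holds on the ${\rm GL}$ side. The tempered case then gives factor-by-factor equality, and multiplying yields the lemma. If $\pi$ itself is non-tempered, I would write it as a Langlands quotient and apply Property~(x) on the $\pi$ side as well, matching it with the corresponding standard module for $\Pi$. The expected main case, however, is $\pi$ tempered.
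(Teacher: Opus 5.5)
\paragraph{Tempered $\pi$.} Your argument is correct here, and it takes a different route from the paper.

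The step you call the main obstacle works once you use the exponent of the right sign. Let $\lambda>0$ be an exponent of $\Pi$ in \eqref{tadic}, attached to the discrete series $\delta$. The partner $\delta\nu^{-\lambda}$ contributes $L(s-\lambda,\delta\times\tilde{\delta})$, so $L(s,\Pi\times\tilde{\delta})$ has a pole at $s=\lambda\in(0,\frac{1}{2})$. A pole in $\Re(s)<0$, as in your wording, would contradict nothing. All poles of $L(1-s,\tilde{\Pi}\times\delta)$ lie in $\Re(s)\geq 1-\lambda_1>\frac{1}{2}$, so $\gamma(s,\Pi\times\tilde{\delta},\psi_l)$ vanishes at $s=\lambda$. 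But by Property~(vii), every zero of $\gamma(s,\pi\times\tilde{\delta},\psi_l)$ lies in $\Re(s)\leq 0$.

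No extremal choice of $\lambda$ is needed, and your fallback is unnecessary. The rest of your tempered case, and your Step~2, are fine.

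The paper never shows that $\Pi$ is tempered (see the remark after the lemma). It reduces to discrete series $\delta$ and uses only $r_j<\frac{1}{2}$ in \eqref{tadicPi} to separate the zeros of $P$ from those of $Q$. Your version gives temperedness of $\Pi$ as a by-product and handles tempered $\tau$ in one step.

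\paragraph{Non-tempered $\pi$.} This is the gap, and it cannot be closed, because the statement is false there.

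Take $N=3$ and $\pi={\rm Ind}(\nu^{1/2}\otimes 1)$, induced from the Borel subgroup of ${\rm U}_3(k)$, where $1$ is the trivial character of $l^1$.
\begin{itemize}
\item For real $s$, ${\rm Ind}(\nu^{s}\otimes 1)$ reduces only at $s=\pm 1$, where the trivial representation occurs. In the paper's terms, this is because $L(s,1\times 1)=(1-q_l^{-s})^{-1}$ has a pole at $s=0$. Hence $\pi$ is irreducible, generic and not tempered.
\item The trivial character of $l^1$ lifts to that of $l^\times$. So Property~(iv) gives $\gamma(s,\pi\times\tau,\psi_l)=\gamma(s,\nu^{1/2}\times\tau,\psi_l)\gamma(s,\tau,\psi_l)\gamma(s,\nu^{-1/2}\times\tau,\psi_l)$.
\item This is also the $\gamma$-factor of the tempered representation $\Pi={\rm Ind}(\mathrm{St}\otimes 1)$ of ${\rm GL}_3(l)$, with $\mathrm{St}$ the Steinberg representation of ${\rm GL}_2(l)$. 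So $\Pi$ is a unitary lift of $\pi$.
\item For $\tau=1$, Property~(x) gives $L(s,\pi\times 1)=(1-q_l^{-s-1/2})^{-1}(1-q_l^{-s})^{-1}(1-q_l^{-s+1/2})^{-1}$. On the other hand, $L(s,\Pi\times 1)=(1-q_l^{-s-1/2})^{-1}(1-q_l^{-s})^{-1}$.
\end{itemize}

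The extra pole at $s=\frac{1}{2}$ is invisible in $\gamma$. It cancels against the pole of $L(\frac{1}{2}-s,1)$ coming from the factor $\gamma(s+\frac{1}{2},1,\psi_l)$.

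Your proposed matching of Langlands data breaks down for the same reason: $\Pi$ is its own standard module and has no exponent $\frac{1}{2}$. The paper's proof slips at the same place. For non-tempered $\pi$ it concludes $L(s,\pi\times\delta)=1/P(q_k^{-s})$, having excluded cancellation only on the $\Pi$-side.

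You should therefore state and prove the lemma for tempered $\pi$. This is the only case the paper uses: supercuspidal $\pi_0$ in Propositions~\ref{supercuspidal} and \ref{discreteseries}, and tempered $\tau_0$ in Theorem~\ref{localLC}. Running the separation-of-zeros argument on both sides, the statement also holds for generic $\pi$ whose Langlands exponents are all $<\frac{1}{2}$.
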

\begin{proof}
As in the previous lemma, begin by writing the unitary generic representation $\tau$ of ${\rm GL}_m(l)$ as in \eqref{tadic}, with discrete series as inducing data, and write
\begin{equation*}
   \tau_{\rm temp} = {\rm Ind}(\delta_{d+1} \otimes \cdots \otimes \delta_{d+c}).
\end{equation*}
Then, using Properties~(ix) and (x) of \S~\ref{axioms}, we obtain
\begin{align*}
   L(s,\pi \times \tau) &= L(s,\pi \times \tau_{\rm temp}) \prod_{i=1}^d L(s+\lambda_i,\pi \times \delta_i) L(s-\lambda_i,\pi \times \delta_i), \\
   \varepsilon(s,\pi \times \tau,\psi_l) &= \varepsilon(s,\pi \times \tau_{\rm temp},\psi_l) \prod_{i=1}^d \varepsilon(s+\lambda_i,\pi \times \delta_i,\psi_l) \varepsilon(s-\lambda_i,\pi \times \delta_i,\psi_l).
\end{align*}
Now, for the factors involving $\tau_{\rm temp}$, we use the Langlands classification to express $\pi$ as a Langlands quotient of
\begin{equation*}
   {\rm Ind}(\pi_e \otimes \cdots \otimes \pi_1 \otimes \pi_{\rm temp}).
\end{equation*}
Here, the induction is from the parabolic subgroup ${\bf P}$ of ${\rm U}_N$ with Levi ${\bf M} = \prod_{i=1}^e {\rm Res}\,{\rm GL}_{m_i} \times {\rm U}_{N_0}$, each $\pi_i$ is a quasi-tempered representation of ${\rm GL}_{m_i}(l)$, and $\pi_{\rm temp}$ is a tempered representation of ${\rm U}_{N_0}(k)$. Since $\pi$ is generic, it is also the generic constituent of this induced representation, by the standard module conjecture \cite{dCHeLo}. Now, Properties~(ix) and (x) of \S~\ref{axioms} in this situation directly give
\begin{align*}
   L(s,\pi \times \tau_{\rm temp}) &= L(s,\pi_{\rm temp} \times \tau_{\rm temp}) \prod_{i=1}^e L(s,\pi_i \times \tau_{\rm temp}), \\
   \varepsilon(s,\pi \times \tau_{\rm temp},\psi_l) &= \varepsilon(s,\pi_{\rm temp} \times \tau_{\rm temp},\psi_l) \prod_{i=1}^e \varepsilon(s,\pi_i \times \tau_{\rm temp},\psi_l).
\end{align*}
All representations involved in the previous two equations are quasi-tempered. Each individual local factor on the RHS of these equations can be shifted by Property~(ix) of \S~\ref{axioms}, which leads to an $L$-function and $\varepsilon$-factor involving tempered representations. The connection to $\gamma$-factors, and the previous lemma, is now made via Property~(viii) of \S~\ref{axioms} in addition to Property~(xi) of \S~\ref{additional}. Now, multiplicativity of $\gamma$-factors leads to
\begin{align*}
   L(s,\pi \times \tau_{\rm temp}) &= \prod_{a=1}^c L(s,\pi \times \delta_{d+a}), \\
   \varepsilon(s,\pi \times \tau_{\rm temp},\psi_l) &= \prod_{a=1}^c \varepsilon(s,\pi \times \delta_{d+a},\psi_l).
\end{align*}
The properties of \cite{HeLo2013a}, for example, can be applied to Rankin-Selberg factors to obtain
\begin{align*}
   L(s,\Pi \times \tau) &= \prod_{a=1}^c L(s,\Pi \times \delta_{d+a}) \prod_{i=1}^d L(s+\lambda_i,\Pi \times \delta_i) L(s-\lambda_i,\Pi \times \delta_i), \\
   \varepsilon(s,\Pi \times \tau,\psi_l) &= \prod_{a=1}^c \varepsilon(s,\Pi \times \delta_{d+a},\psi_l) \prod_{i=1}^d \varepsilon(s+\lambda_i,\Pi \times \delta_i,\psi_l) \varepsilon(s-\lambda_i,\Pi \times \delta_i,\psi_l),
\end{align*}
where we reduced to proving
\begin{align*}
   L(s,\pi \times \delta) &= L(s,\Pi \times \delta) \\
   \varepsilon(s,\pi \times \delta,\psi_l) &= \varepsilon(s,\Pi \times \delta,\psi_l)
\end{align*}
for discrete series representations $\delta$ of ${\rm GL}_m(l)$. Indeed, we now address this case in what follows.

For the unitary generic representation $\Pi$ of ${\rm GL}_N(l)$, we use \eqref{tadic} to write
\begin{equation}\label{tadicPi}
   \Pi = {\rm Ind}(\xi_1\nu^{r_1} \otimes \cdots \otimes \xi_f\nu^{r_f} \otimes \xi_{f+1} \otimes \cdots \otimes \xi_{f+h} \otimes \xi_f\nu^{-r_f} \otimes \cdots \otimes \xi_1\nu^{-r_1}),
\end{equation}
with each $\xi_i$ a discrete series and $0 < r_f \leq \cdots \leq r_1 < 1/2$. Then, by Lemma~\ref{uniqueBC} and multiplicativity of Rankin-Selberg $\gamma$-factors, we have
\begin{align*}
   \gamma(s,\pi \times \delta,\psi_l) = & \prod_{i=1}^h \gamma(s,\xi_{f+i} \times \delta,\psi_l) \\
   & \prod_{j=1}^f \gamma(s+r_j,\xi_j \times \delta,\psi_l) \gamma(s-r_j,\xi_j \times \delta,\psi_l).
\end{align*}
Each $\gamma$-factor on the right hand side of the previous expression involves discrete series (hence tempered) representations. Thus, each factor has a corresponding $L$-function and root number via Property~(viii) of \S~\ref{axioms}. We define the polynomial $P(t)$, with $P(0) = 1$, by
\begin{equation*}
   P(q_k^{-s})^{-1} = \prod_{i=1}^h L(s,\xi_{f+i} \times \delta) \prod_{j=1}^f L(s+r_j,\xi_j \times \delta) L(s-r_j,\xi_j \times \delta).
\end{equation*}
From the proposition on p.~451 of \cite{JPSS1983}, each $L(s,\xi_i \times \delta)$ has no poles for $\Re(s) > 0$. And since $r_j < 1/2$, the function $P(q_k^{-s})$ is non-zero for $\Re(s) \geq 1/2$. Now, the product
\begin{equation*}
   Q(q_k^{-s})^{-1} = \prod_{i=1}^h L(1-s,\tilde{\xi}_{f+i} \times \tilde{\delta}) \prod_{j=1}^f L(1-s-r_j,\tilde{\xi}_j \times \tilde{\delta}) L(1-s+r_j,\tilde{\xi}_j \times \tilde{\delta})
\end{equation*}
is in turn non-zero for $\Re(s) \leq 1/2$. Then, Property~(viii) of \S~\ref{axioms} gives the relation
\begin{equation*}
   \gamma(s,\pi \times \delta,\psi_l) \mathrel{\ddot{\sim}} \frac{P(q_k^{-s})}{Q(q_k^{-s})},
\end{equation*}
which is an equality up to a monomial in $q_k^{-s}$. More precisely, the monomial is the root number, which we can decompose as
\begin{align*}
   \varepsilon(s,\pi \times \delta,\psi_l) = & \prod_{i=1}^h \varepsilon(s,\xi_{f+i} \times \delta,\psi_l) \\
   & \prod_{j=1}^f \varepsilon(s+r_j,\xi_j \times \delta,\psi_l) \varepsilon(s-r_j,\xi_j \times \delta,\psi_l) \\
   = & \varepsilon(s,\Pi \times \delta,\psi_l).
\end{align*}

Notice that the regions where $P(q_k^{-s})$ and $Q(q_k^{-s})$ may be zero do not intersect. Hence, there are no cancellations involving the numerator and denominator of $\gamma(s,\pi \times \delta,\psi_l)$. This shows that
\begin{equation*}
   L(s,\pi \times \delta) = \frac{1}{P(q_k^{-s})} = L(s,\Pi \times \delta),
\end{equation*}
where the second equality follows using the form \eqref{tadicPi} of $\Pi$ and the multiplicativity property of Rankin-Selberg $L$-functions.
\end{proof}

Unitarity of the lift $\Pi$ is the hypothesis that allows us to use the classification of \cite{Ta1986} in Lemma~\ref{localLfactors}. However, we do not deduce temperedness of the lift from its unitarity. Instead, the lift of a generic supercuspidal representation is shown to be tempered by a purely local argument, Proposition~\ref{scTempered}. The lifts of generic discrete series and tempered representations of Propositions~\ref{discreteseries} and \ref{temperedBC} are then tempered by Theorem~\ref{temperedtempered}. Since tempered representations of ${\rm GL}_N(l)$ are unitary, Lemma~\ref{localLfactors} applies to all of these lifts. Unitarity of the local components of the global lift is a consequence of Theorem~\ref{strongBC}.

\begin{proposition}\label{scTempered}
Let $\pi_0$ be a generic supercuspidal representation of ${\rm U}_N(k)$, and let $\Pi_0$ be a local base change lift of $\pi_0$. Then
\begin{equation*}
   \Pi_0 \cong {\rm Ind}(\rho_1 \otimes \cdots \otimes \rho_d),
\end{equation*}
with each $\rho_i$ a unitary supercuspidal representation of ${\rm GL}_{m_i}(l)$. In particular, $\Pi_0$ is tempered, hence unitary.
\end{proposition}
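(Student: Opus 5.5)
We will show that the poles of the $\gamma$-factors $\gamma(s,\pi_0 \times \tau,\psi_l)$, for $\tau$ supercuspidal of ${\rm GL}_m(l)$, are confined to the line $\Re(s) = 0$ (and those of the numerator to $\Re(s)=1$ after the local functional equation). We then transport this to $\Pi_0$ via Definition~\ref{localBCdef}, and read off the supercuspidal support of $\Pi_0$.

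First, by Bernstein--Zelevinsky, write the generic representation $\Pi_0$ as the generic constituent of ${\rm Ind}(\rho_1\nu^{a_1} \otimes \cdots \otimes \rho_d\nu^{a_d})$, with each $\rho_i$ unitary supercuspidal of ${\rm GL}_{m_i}(l)$ and $a_i \in \mathbb{R}$. By multiplicativity of Rankin-Selberg $\gamma$-factors,
\begin{equation*}
   \gamma(s,\Pi_0 \times \tilde\rho,\psi_l) = \prod_{i=1}^d \gamma(s+a_i,\rho_i \times \tilde\rho,\psi_l).
\end{equation*}
For a unitary supercuspidal $\rho$, the factor $\gamma(s+a_i,\rho_i\times\tilde\rho,\psi_l)$ has $L(s+a_i,\rho_i\times\tilde\rho)$ in its numerator. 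This has poles exactly where $\rho_i\nu^{s+a_i} \cong \rho$, i.e.\ on $\Re(s) = -a_i$ when $\rho_i \cong \rho\nu^{it}$. Its denominator $L(1-s-a_i,\tilde\rho_i \times \rho)$ contributes zeros on $\Re(s) = 1-a_i$.

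Second, on the unitary group side, $\pi_0$ is supercuspidal and generic, hence tempered. By Property~(vii), $L(s,\pi_0 \times \tau)$ is holomorphic for $\Re(s)>0$ for every unitary supercuspidal $\tau$. By Property~(viii) and Property~(xi), the same holds for $L(1-s,\tilde\pi_0\times\tilde\tau)$ for $\Re(s)<1$. So $\gamma(s,\pi_0\times\tau,\psi_l)$ has poles only in $\Re(s)\le 0$ and zeros only in $\Re(s)\ge 1$. Definition~\ref{localBCdef} transfers this to $\gamma(s,\Pi_0\times\tau,\psi_l)$.

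Now take the $\rho_i$ with maximal $|a_i|$, and set $\rho=\rho_i$. Suppose $a_i>0$. Group the factors of the product over the indices $j$ with $\rho_j \cong \rho\nu^{it}$ for various $t$. Among these, the one with the largest real exponent $a_i$ produces a pole at $\Re(s) = -a_i$, after possibly twisting $\rho$ by $\nu^{it}$ to align imaginary parts. This pole cannot be cancelled by a zero, since zeros occur at $\Re(s) = 1-a_j \geq 1-a_i > -a_i$ as long as $a_i < 1/2$. We still need a priori control, and we get it from the symmetric estimate: a pole forces $-a_i \le 0$. Running the same argument with $\tilde\rho_i$ and the local functional equation, using the relation between $\tilde\Pi_0$ and $\tilde\pi_0$ via Property~(xi), forces the opposite inequality. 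Hence every $a_i = 0$.

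The main obstacle is the cancellation bookkeeping in the comparison of poles and zeros. A pole at $\Re(s)=-a$ could be cancelled by a zero coming from some $\rho_j\nu^{a_j}$ with $1-a_j = -a$, i.e.\ $a_j = 1+a$, on the same inertial line. I will handle this by choosing $a$ extremal: take the maximal $a_j$ among indices with $\rho_j$ in the inertial class of $\rho$. Then its pole at $-a_{\max}$ can only be cancelled by a zero at $1-a_j = -a_{\max}$, which would need $a_j = 1+a_{\max} > a_{\max}$, a contradiction. The same reasoning with the minimal exponent and the numerator handles the negative side.

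Once all $a_i=0$, the induced representation ${\rm Ind}(\rho_1\otimes\cdots\otimes\rho_d)$ from unitary supercuspidals is irreducible, by Bernstein--Zelevinsky, since no two inducing factors differ by $\nu^{\pm1}$. It is therefore isomorphic to its generic constituent $\Pi_0$. It is tempered, and hence unitary.
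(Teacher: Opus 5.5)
There is a genuine gap at the step where you force $a_i=0$. A minor point first: $\gamma(s,\pi\times\tau,\psi_l)=\varepsilon(s,\pi\times\tau,\psi_l)\,L(1-s,\tilde\pi\times\tilde\tau)/L(s,\pi\times\tau)$, so poles of $L(s,\cdot)$ are zeros of $\gamma$, not poles. You swap the labels consistently on both sides, so this is harmless.

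The real problem is that the only information about $\pi_0$ you use is temperedness, through Property~(vii). This says that the zeros of $\gamma(s,\pi_0\times\tilde\rho,\psi_l)$ lie in $\Re(s)\le 0$ and its poles in $\Re(s)\ge 1$. Your extremal argument correctly produces an uncancelled zero on $\Re(s)=-a^+$ and an uncancelled pole on $\Re(s)=1-a^-$, where $a^\pm$ are the largest and smallest exponents among the $\rho_j$ of the form $\rho\nu^{it}$. Against half-plane constraints, however, this only gives $a^-\le 0\le a^+$, which is vacuous.

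The ``symmetric estimate'' does not supply the reverse inequalities. $\tilde\Pi_0$ is indeed a lift of $\tilde\pi_0$, with exponents $-a_i$, but the same reasoning returns exactly the same two inequalities. Property~(xi) only encodes the symmetry $s\mapsto 1-s$ already built into $\gamma$.

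Concretely, a constituent of $\Pi_0$ attached to the segment $[\rho\nu^{-1},\rho\nu]$ contributes, after the intermediate cancellations, just a zero on $\Re(s)=-1$ and a pole on $\Re(s)=2$. Even the non-tempered segment $[\rho\nu^{-3/2},\rho\nu^{1/2}]$ contributes only a zero on $\Re(s)=-1/2$ and a pole on $\Re(s)=5/2$. Both are compatible with everything you use.

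A sanity check confirms that something is missing. Your argument uses nothing about $\pi_0$ beyond temperedness, yet the conclusion fails for generic discrete series of ${\rm U}_N(k)$. Their lifts in general involve discrete series of general linear groups attached to segments of length $>1$ (Proposition~\ref{discreteseries}).

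The missing input is the one the paper's proof relies on. Because $\pi_0$ and $\tilde\pi_0$ are generic \emph{supercuspidal}, the poles of $L(s,\pi_0\times\tilde\rho)$ and of $L(s,\tilde\pi_0\times\rho)$ lie exactly on the line $\Re(s)=0$, not merely in $\Re(s)\le 0$ (\S~5 of \cite{LomLS}; \S~7 of \cite{Sh1990} in characteristic zero). Hence the zeros of $\gamma(s,\pi_0\times\tilde\rho,\psi_l)$ lie on $\Re(s)=0$ and its poles on $\Re(s)=1$. Your extremal comparison then gives $a^+=0=a^-$, so every exponent attached to the class of $\rho$ vanishes.

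With that replacement, the rest of your outline coincides with the paper's argument. You choose extremal exponents within a single class, and conclude that ${\rm Ind}(\rho_1\otimes\cdots\otimes\rho_d)$ with unitary supercuspidal $\rho_i$ is irreducible, hence isomorphic to $\Pi_0$ and tempered.
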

\begin{proof}
By Definition~\ref{localBCdef}, we have in particular
\begin{equation*}
   \gamma(s,\pi_0 \times \rho,\psi_l) = \gamma(s,\Pi_0 \times \rho,\psi_l)
\end{equation*}
for every unitary supercuspidal representation $\rho$ of ${\rm GL}_m(l)$, which is the only form of the hypothesis that we use. By the Zelevinsky classification \cite{Ze1980}, $\Pi_0$ is the generic constituent of an induced representation
\begin{equation*}
   {\rm Ind}(\rho_1\nu^{\lambda_1} \otimes \cdots \otimes \rho_d\nu^{\lambda_d}),
\end{equation*}
with each $\rho_i$ a unitary supercuspidal representation of ${\rm GL}_{m_i}(l)$ and $\lambda_i \in \mathbb{C}$. We show that $\Re(\lambda_i) = 0$ for all $i$.

Fix $i_0$ and let $\rho = \rho_{i_0}$. Let $f$ be the number of unramified characters $\chi$ of $l^\times$ such that $\rho\chi \cong \rho$. Let $\Lambda$ be the multiset of those $\lambda_i$ for which $\rho_i \cong \rho\nu^{iy}$ for some $y \in \mathbb{R}$; after absorbing $iy$ into $\lambda_i$, we may assume that $\rho_i = \rho$ for $\lambda_i \in \Lambda$. For a unitary supercuspidal representation $\rho'$, we have that $L(s,\rho' \times \tilde{\rho}) = 1$ unless $\rho' \cong \rho\nu^{iy}$ for some $y \in \mathbb{R}$, and $L(s,\rho \times \tilde{\rho}) = (1 - q_l^{-fs})^{-1}$ \cite{JPSS1983}. Multiplicativity of Rankin-Selberg $\gamma$-factors \cite{JPSS1983, HeLo2013a} then gives
\begin{equation*}
   \gamma(s,\Pi_0 \times \tilde{\rho},\psi_l) \mathrel{\ddot{\sim}} \prod_{\lambda \in \Lambda} \frac{1 - q_l^{-f(s+\lambda)}}{1 - q_l^{-f(1-s-\lambda)}},
\end{equation*}
an equality up to a monomial in $q_k^{-s}$, as in the proof of Lemma~\ref{localLfactors}. The factor attached to $\lambda$ vanishes exactly at $s \in -\lambda + \frac{2\pi i}{f\log q_l}\mathbb{Z}$, and it has poles exactly at $s \in 1-\lambda + \frac{2\pi i}{f\log q_l}\mathbb{Z}$.

Let $\lambda^+ \in \Lambda$ have maximal real part. The denominator can vanish at $s = -\lambda^+$ only if there is a $\lambda \in \Lambda$ with $\Re(\lambda) = \Re(\lambda^+) + 1$, which is impossible. Hence $s = -\lambda^+$ is a zero of $\gamma(s,\Pi_0 \times \tilde{\rho},\psi_l)$. Similarly, let $\lambda^- \in \Lambda$ have minimal real part. The numerator can vanish at $s = 1-\lambda^-$ only if there is a $\lambda \in \Lambda$ with $\Re(\lambda) = \Re(\lambda^-) - 1$. Hence $s = 1-\lambda^-$ is a pole of $\gamma(s,\Pi_0 \times \tilde{\rho},\psi_l)$.

On the other hand, $\pi_0$ and $\tilde{\rho}$ are tempered, and Properties~(vii) and (viii) of \S~\ref{axioms} give
\begin{equation*}
   \gamma(s,\pi_0 \times \tilde{\rho},\psi_l) = \varepsilon(s,\pi_0 \times \tilde{\rho},\psi_l) \frac{L(1-s,\tilde{\pi}_0 \times \rho)}{L(s,\pi_0 \times \tilde{\rho})},
\end{equation*}
with $\varepsilon(s,\pi_0 \times \tilde{\rho},\psi_l)$ a monomial in $q_k^{-s}$. Since $\pi_0$ and $\tilde{\pi}_0$ are generic supercuspidal representations, the poles of $L(s,\pi_0 \times \tilde{\rho})$ and of $L(s,\tilde{\pi}_0 \times \rho)$ lie on the line $\Re(s) = 0$, see \S~5 of \cite{LomLS}; in characteristic zero, see \S~7 of \cite{Sh1990}. Hence the zeros of $\gamma(s,\pi_0 \times \tilde{\rho},\psi_l)$ lie on the line $\Re(s) = 0$ and its poles on the line $\Re(s) = 1$; in particular, there are no cancellations between numerator and denominator. Comparing with the previous paragraph, we obtain $\Re(\lambda^+) = 0 = \Re(\lambda^-)$. Hence $\Re(\lambda) = 0$ for every $\lambda \in \Lambda$. Since $i_0$ was arbitrary, every $\rho_i\nu^{\lambda_i}$ is unitary.

Finally, write $\Pi_0$ as the generic constituent of a product of essentially square integrable representations attached to segments, as in \eqref{segment}. A segment $\left[ \rho'\nu^{a}, \rho'\nu^{a+t-1} \right]$ all of whose elements are unitary must have $t = 1$. Hence
\[ \Pi_0 \cong {\rm Ind}(\rho_1\nu^{\lambda_1} \otimes \cdots \otimes \rho_d\nu^{\lambda_d}), \]
which is irreducible and tempered by \cite{Ze1980}; notice that each $\rho_i\nu^{\lambda_i}$ is unitary supercuspidal.
\end{proof}

\subsection{A global to local result} We now show that every local component of the weak base change lift of Proposition~\ref{weakBC} is the local base change of the corresponding local component of $\pi$. At unramified and split places, the local components of the global lift are those of \S\S~\ref{unramifiedBC} and \ref{splitBC}, and the central character is obtained via \eqref{hilbert90}.

\begin{proposition}\label{globallocal}
Given a globally generic cuspidal automorphic representation $\pi = \otimes' \pi_v$ of ${\rm U}_N(\mathbb{A}_F)$, let ${\rm BC}(\pi) = \Pi = \otimes' \Pi_v$ be the base change lift of Proposition~\ref{weakBC}. Then, for every $v$ we have:
\begin{itemize}
   \item[(i)] $\Pi_v$ is the uniquely determined local base change of $\pi_v$;
   \item[(ii)] $\Pi_v$ has central character $\omega_{\Pi_v}$ of $E_v^\times$ obtained from the character $\omega_{\pi_v}$ of $E_v^1$ via Hilbert's theorem 90.
\end{itemize}
\end{proposition}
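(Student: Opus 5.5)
Fix a place $v_0$ of $F$. At split places and at inert places where $\pi_{v_0}$ is unramified, both (i) and (ii) are essentially built into the proof of Proposition~\ref{weakBC}. There, $\Pi_{v_0}$ coincides with the representation of \S~\ref{unramifiedBC} or \S~\ref{splitBC}, and the equality of $\gamma$-factors against every $\tau$ of ${\rm GL}_m(E_{v_0})$ (in particular every supercuspidal) was recorded in those subsections. One point needs care: the weak lift is only known to agree with the local recipe at almost all places. So I would first dispose of finitely many exceptional places by the same argument as for ramified places. The real work is therefore at an arbitrary place $v_0$, where I use a global-to-local comparison of functional equations.

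Let $\tau_0$ be a supercuspidal representation of ${\rm GL}_m(E_{v_0})$, with $1\le m\le N-1$. The case $m\ge N$ can be reduced using Henniart's result or handled identically, since the functional equation of Rankin--Selberg factors for $\Pi$ holds for all $m$. I would embed $\tau_0$ as the $v_0$-component of a cuspidal automorphic representation $\tau$ of ${\rm GL}_m(\mathbb{A}_E)$ that is unramified at the places of $S\setminus\{v_0\}$ other than those above $v_0$. This uses the standard globalization of supercuspidals over function fields, e.g.\ via Poincar\'e series as in \cite{Lo2009}/\cite{HeLo2013a}. I would then twist by a Gr\"o{\ss}encharakter $\eta$ with $\eta_{v_0}$ trivial and $\eta_v$ highly ramified for $v\in S\setminus\{v_0\}$. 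Next, I compare the functional equation (vi) for $L^{S}(s,\pi\times\tau\eta)$ with the Rankin--Selberg functional equation for $L^{S}(s,\Pi\times\tau\eta)$. The partial $L$-functions coincide outside $S$. At $v\in S\setminus\{v_0\}$, the local $\gamma$-factors agree by stability: Lemma~\ref{stableform} for $\pi_v$, and stability of Rankin--Selberg $\gamma$-factors for $\Pi_v$, since $\omega_{\Pi_v}$ matches $\omega_{\pi_v}$. Dividing the two functional equations leaves
\begin{equation*}
\gamma(s,\pi_{v_0}\times\tau_0,\psi_{v_0})=\gamma(s,\Pi_{v_0}\times\tau_0,\psi_{v_0}),
\end{equation*}
which is exactly Definition~\ref{localBCdef}. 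Uniqueness then follows from Lemma~\ref{uniqueBC}.

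For the stability step to apply at $v\in S\setminus\{v_0\}$, I need the central character compatibility (ii) there. I would prove (ii) first, globally. The central character $\omega_\Pi$ is a Gr\"o{\ss}encharakter which agrees at almost all places with $\omega_{\Pi^\flat}$, and $\omega_{\Pi^\flat}$ was built from $\omega_\pi$ via \eqref{hilbert90}, so it is automorphic as well. Two Gr\"o{\ss}encharaktere agreeing almost everywhere are equal, by weak approximation or the Hecke characters' version of strong multiplicity one. This gives (ii) at every place, and the stable comparison above then becomes legitimate.

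I expect the main obstacle to be the globalization of $\tau_0$ with controlled ramification elsewhere. A second delicate point is ensuring that the stability statement for $\Pi_v$ holds even though $\Pi_v$ is not a priori a principal series. I plan to handle this by invoking Rankin--Selberg stability for arbitrary generic $\Pi_v$ with fixed central character, together with Lemma~\ref{stableform} to compare both sides with the same principal series.
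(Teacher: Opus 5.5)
Your proposal is correct and follows the paper's proof essentially step for step: you globalize $\tau_0$ (Lemma~3.1 of \cite{LomLS}), twist by a Gr\"o{\ss}encharakter that is trivial at $v_0$ and highly ramified on $S\setminus\{v_0\}$, compare the Langlands--Shahidi and Rankin--Selberg functional equations to isolate the $\gamma$-factors at $v_0$, conclude with Lemma~\ref{uniqueBC}, and obtain (ii) because two Gr\"o{\ss}encharaktere agreeing almost everywhere coincide. Your reordering only makes explicit what the paper uses implicitly: you prove (ii) first so that the stable comparison at $v \in S\setminus\{v_0\}$ is justified, and you invoke stability of Rankin--Selberg $\gamma$-factors because $\Pi_v$ need not be a principal series, as Lemma~\ref{stableform} requires.
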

\begin{proof}
The base change lift $\Pi = {\rm BC}(\pi)$, being globally generic, has every local $\Pi_v$ generic. At every place where $\pi_v$ is unramified, $\Pi_v$ is a constituent of an unramified principal series representation by construction. Since $\Pi_v$ is generic, it has to be the unique generic constituent of the principal series representation. Thus
\begin{equation}\label{unrBCv}
   \Pi_v = {\rm BC}(\pi_v)
\end{equation}
is the unramified local base change of \S~\ref{unramifiedBC}, if $v$ is inert, and that of \S~\ref{splitBC}, if $v$ is split.

Fix a place $v_0$ of $F$ which remains inert in $E$. We wish to show that
\begin{equation*}
   \gamma(s,\pi_{v_0} \times \tau_0,\psi_{v_0}) = \gamma(s,\Pi_{v_0} \times \tau_0,\psi_{v_0})
\end{equation*}
for every generic $(E_{v_0}/F_{v_0},\pi_{v_0},\tau_0,\psi_{v_0}) \in \mathfrak{ls}({\rm U}_N,{\rm Res}\,{\rm GL}_m,p)$ with $\tau_0$ supercuspidal. Let $S$ be a finite set of places of $F$, containing $v_0$, such that $\pi_v$ is unramified for $v \notin S$. Via Property~(v) of \S~\ref{axioms}, we may assume that $\psi_{v_0}$ is the component of a global additive character $\psi = \otimes \psi_v : F \backslash \mathbb{A}_F \rightarrow \mathbb{C}^\times$.

Via Lemma~3.1 of \cite{LomLS}, there is a cuspidal automorphic representation $\tau = \otimes' \tau_v$ of ${\rm GL}_m(\mathbb{A}_E)$ such that $\tau_{v_0} = \tau_0$ and $\tau_v$ is unramified for all $v \notin S$. Now, using the Grunwald-Wang theorem of class field theory \cite{ArTa2009}, there exists a Gr\"o{\ss}en\-charakter $\eta : E^\times \backslash \mathbb{A}_E^\times \rightarrow \mathbb{C}^\times$ such that $\eta_{v_0} = 1$ and $\eta_v$ is highly ramified for all $v \in S - \left\{ v_0 \right\}$.

At places $v \in S - \left\{ v_0 \right\}$, we have the stable form of Lemma~\ref{stableform}. Indeed, after twisting $\tau_v$ by the highly ramified character $\eta_v$ we have
\begin{equation*}
   \gamma(s,\pi_v \times (\tau_v \cdot \eta_v),\psi_v) = \gamma(s,\Pi_v \times (\tau_v \cdot \eta_v),\psi_v).
\end{equation*}
At split places $v \in S$, we write $\tau_v = \tau_{1,v} \otimes \tau_{2,v}$, regarded as a representation of ${\rm Res}_{E_v/F_v}{\rm GL}_m(F_v) = {\rm GL}_m(F_v) \times {\rm GL}_m(F_v)$. From \S~\ref{separable}~(ii), the Langlands-Shahidi $\gamma$-factors are given by
\begin{equation*}
   \gamma(s,\pi_v \times \tau_v,\psi_v) = \gamma(s,\pi_v \times \tau_{1,v},\psi_v) \gamma(s,\tilde{\pi}_v \times \tau_{2,v},\psi_v),
\end{equation*}
which are compatible with the split base change map of \S~\ref{splitBC}. Now, consider $\tau' = \tau \otimes \eta$. For $(E/F,\pi,\tau',\psi,S) \in \mathcal{LS}({\rm U}_N,{\rm Res}\,{\rm GL}_m,p)$ we have the global functional equation
\begin{equation*}
   L^S(s,\pi \times \tau') = \gamma(s,\pi_{v_0} \times \tau_{v_0},\psi_{v_0}) \prod_{v \in S - \left\{ v_0 \right\}} \gamma(s,\pi_v \times (\tau_v \cdot \eta_v),\psi_v) L^S(1-s,\tilde{\pi} \times \tilde{\tau}'),
\end{equation*}
and for $(E/F,\Pi,\tau',\psi,S) \in \mathcal{LS}({\rm Res}\,{\rm GL}_N,{\rm Res}\,{\rm GL}_m,p)$ the functional equation for Rankin-Selberg products reads
\begin{equation*}
   L^S(s,\Pi \times \tau') = \gamma(s,\Pi_{v_0} \times \tau_{v_0},\psi_{v_0}) \prod_{v \in S - \left\{ v_0 \right\}} \gamma(s,\Pi_v \times (\tau_v \cdot \eta_v),\psi_v) L^S(1-s,\tilde{\Pi} \times \tilde{\tau}').
\end{equation*}
At unramified places, it follows from equation~\eqref{unrBCv} that local $L$-functions agree. This gives equality of the corresponding partial $L$-functions appearing in the above two functional equations. We thus obtain
\begin{equation*}
   \gamma(s,\pi_{v_0} \times \tau_{v_0},\psi_{v_0}) = \gamma(s,\Pi_{v_0} \times \tau_{v_0},\psi_{v_0}).
\end{equation*}
Since our choice of supercuspidal $\tau_0 = \tau_{v_0}$ of ${\rm GL}_m(l)$ was arbitrary, we have that $\Pi_{v_0}$ is a local base change for $\pi_{v_0}$. Uniqueness follows from Lemma~\ref{uniqueBC}. This proves property~(i), as desired.

Note that the central character $\omega_\pi = \otimes \omega_{\pi_v}$ of $\pi$ is an automorphic representation of ${\rm U}_1(\mathbb{A}_F)$. Now, let $\chi = \otimes \chi_v$ be defined on ${\rm GL}_1(\mathbb{A}_E)$ from $\omega_\pi$ via Hilbert's theorem 90. Namely, we let $\mathfrak{h}_v : x_v \mapsto x_v \bar{x}_v^{-1}$ be the continuous map of \eqref{hilbert90} and let
\begin{equation*}
   \chi_v = \omega_{\pi_v} \circ \mathfrak{h}_v
\end{equation*}
at every place $v$ of $F$; we view $E_v$ as a degree-$2$ finite \'etale algebra over $F_v$. Then $\chi : E^\times \backslash \mathbb{A}_E^\times \rightarrow \mathbb{C}^\times$ is a Gr\"o{\ss}en\-charakter. Also, $\chi$ and $\omega_\Pi$ are continuous Gr\"o{\ss}en\-charakters such that $\chi_v$ agrees with $\omega_{\Pi_v}$ at every $v \notin S$. Hence $\chi = \omega_\Pi$. Thus, the central character of $\Pi_{v_0}$ is $\omega_{\Pi_{v_0}} = \chi_{v_0}$, which is obtained via $\omega_{\pi_{v_0}}$ as in property~(ii) of the proposition.
\end{proof}

\begin{remark}\label{globalrmk}
Let $\pi_0$ be a generic representation of ${\rm U}_N(k)$. Suppose there is a globally generic cuspidal automorphic representation $\pi = \otimes' \pi_v$ of ${\rm U}_N(\mathbb{A}_F)$ with $\pi_0 = \pi_{v_0}$ at some place $v_0$ of $F$, where $F_{v_0} = k$. Then it follows from Proposition~\ref{globallocal} that it has a uniquely determined local base change $\Pi_0 = {\rm BC}(\pi_0)$.
\end{remark}

\section{Generic local Langlands functoriality}\label{sec:LLF}

The main purpose of this section is to establish local Langlands functoriality for generic representations of the quasi-split unitary groups; specifically, local stable base change is established in Theorem~\ref{localLC}.

To avoid confusion between local and global notation, throughout this section we use $\pi_0$ to denote a local representation of a unitary group and $\Pi_0$ for its local base change, when it exists; we use $\pi = \otimes' \pi_v$ for a cuspidal automorphic representation of a unitary group and $\Pi = \otimes' \Pi_v$ for its corresponding global base change. Otherwise, we keep the notation of \S~\ref{sec:local}.

\subsection{Supercuspidal lift} We begin with existence and an $L$-functions description for the supercuspidal lift.

\begin{proposition}\label{supercuspidal}
Let $\pi_0$ be a generic supercuspidal representation of ${\rm U}_N(k)$. Then $\pi_0$ has a unique local base change
\begin{equation*}
   \Pi_0 = {\rm BC}(\pi_0)
\end{equation*}
to ${\rm GL}_N(l)$. The central character $\omega_{\Pi_0}$ of $\Pi_0$ is the character of $l^\times$ obtained from $\omega_{\pi_0}$ of $l^1$ via Hilbert's theorem 90. Moreover
\begin{equation*}
   \Pi_0 = {\rm Ind}(\Pi_{0,1} \otimes \cdots \otimes \Pi_{0,d}),
\end{equation*}
where each $\Pi_{0,i}$ is a supercuspidal representation of ${\rm GL}_{N_i}(l)$ satisfying: $\Pi_{0,i} \cong \widetilde{\Pi}_{0,i}^\theta$; $\Pi_{0,i} \ncong \Pi_{0,j}$ for $i \neq j$; and
\begin{itemize}
   \item[(i)] $L(s,\Pi_{0,i},r_\mathcal{A})$ has a pole at $s=0$ if $N$ is odd;
   \item[(ii)] $L(s,\Pi_{0,i} \otimes \eta_{l/k},r_\mathcal{A})$ has a pole at $s=0$ if $N$ is even.
\end{itemize}
\end{proposition}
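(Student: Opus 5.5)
The plan is to proceed by a global-to-local argument, then read off the shape of the lift locally.

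\emph{Existence.} Since $\pi_0$ is generic supercuspidal, I would globalize it. Choose a separable quadratic extension $E/F$ of global function fields and a place $v_0$ of $F$, inert in $E$, with $E_{v_0}/F_{v_0} \cong l/k$. Then I would produce a globally generic cuspidal automorphic representation $\pi$ of ${\rm U}_N(\mathbb{A}_F)$ with $\pi_{v_0} \cong \pi_0$; this is the analogue for ${\rm U}_N$ of Lemma~3.1 of \cite{LomLS}, which is proved with Poincar\'e series for generic supercuspidals. Remark~\ref{globalrmk}, that is, Proposition~\ref{globallocal}, then gives a local base change $\Pi_0 = \Pi_{v_0}$ of $\pi_0$, unique by Lemma~\ref{uniqueBC}. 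Part~(ii) of Proposition~\ref{globallocal} gives the central character statement.

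\emph{Shape of $\Pi_0$.} Proposition~\ref{scTempered} writes $\Pi_0 \cong {\rm Ind}(\Pi_{0,1}\otimes\cdots\otimes\Pi_{0,d})$ with unitary supercuspidals $\Pi_{0,i}$. The remaining properties all come from comparing poles at $s=0$ of $\gamma(s,\pi_0\times\tau,\psi_l)=\gamma(s,\Pi_0\times\tau,\psi_l)$ for supercuspidal $\tau$.

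\emph{Self-duality, $\Pi_{0,i}\cong\widetilde{\Pi}_{0,i}^\theta$.} Take $\tau=\widetilde{\Pi}_{0,i}$. The Rankin--Selberg side has a zero at $s=0$ coming from $L(s,\Pi_{0,i}\times\widetilde{\Pi}_{0,i})$. No cancellation occurs, because the poles of the denominator $L(1-s,\ldots)$ lie on $\Re s=1$. By Properties (vii)--(viii), $L(s,\pi_0\times\widetilde{\Pi}_{0,i})$ must therefore have a pole at $s=0$. For generic supercuspidal $\pi_0$ of ${\rm U}_N$ and supercuspidal $\tau$ of ${\rm GL}_m(l)$, Shahidi's theory (\S5 of \cite{LomLS}) says that such a pole forces $\tau\cong\tilde\tau^\theta$. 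It also forces $L(s,\tau,r_\mathcal{A})$, for $N$ odd, or $L(s,\tau\otimes\eta_{l/k},r_\mathcal{A})$, for $N$ even, to have no pole at $s=0$. Here I am using the parity rule for the Siegel Levi of ${\rm U}_{N+2m}$: exactly one of the two local coefficients $L(s,\pi_0\times\tau)$ and $L(2s,\tau,r_{\mathcal{A}}\otimes\ldots)$ can have a pole at $0$. This gives $\widetilde{\Pi}_{0,i}\cong\widetilde{\Pi}_{0,i}^{\theta\,\sim}$, hence $\Pi_{0,i}\cong\widetilde{\Pi}_{0,i}^\theta$.

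\emph{Asai conditions (i), (ii).} By Corollary~\ref{RSAsaicor}, $L(s,\Pi_{0,i}\times\Pi_{0,i}^\theta)$ has a simple pole at $0$ and equals the product of the untwisted and twisted Asai $L$-functions. Exactly one of the two factors has the pole, and the previous step rules out the wrong one. This yields (i) for $N$ odd and (ii) for $N$ even, after applying the self-duality $\widetilde{\Pi}_{0,i}\cong\Pi_{0,i}^\theta$ and Corollary~\ref{RSAsaicor} to move between $\tau$ and $\tau^\theta$.

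\emph{Multiplicity freeness.} Take $\tau=\widetilde{\Pi}_{0,i}$. If $\Pi_{0,i}$ occurred with multiplicity $\ge2$, the order of the zero of $\gamma(s,\Pi_0\times\tau)$ at $s=0$ would be at least $2$. On the unitary side, however, $L(s,\pi_0\times\tau)$ has at most a simple pole at $0$, since the Langlands--Shahidi $L$-function for a generic supercuspidal is $\prod(1-\alpha q^{-s})^{-1}$ with a simple pole from the residual intertwining operator.

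\emph{Main obstacle.} The hardest step is this last pole-order comparison, together with the Shahidi dichotomy in positive characteristic. I would try first to extract both directly from \S5 of \cite{LomLS}, which treats the local coefficient and the Plancherel measure $\mu(s,\pi_0\otimes\tau)$.
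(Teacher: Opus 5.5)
Outside the Asai step, your route is the paper's. You globalize via Lemma~3.1 of \cite{LomLS} and obtain existence, uniqueness and the central character from Proposition~\ref{globallocal} and Lemma~\ref{uniqueBC}. You take the shape of $\Pi_0$ from Proposition~\ref{scTempered}, then use Proposition~5.3 of \cite{LomLS} for self-duality, multiplicity freeness and the Asai conditions. Reading poles off zeros of $\gamma$-factors, rather than citing Lemma~\ref{localLfactors}, is a harmless variation.

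\textbf{The gap: your parity rule for $r_2$ is reversed.} For the maximal Levi ${\rm Res}\,{\rm GL}_m\times{\rm U}_N$ of ${\rm U}_{2m+N}$, the representation $r_2$ is:
\begin{itemize}
\item[] the Asai representation when $N$ is even;
\item[] the twisted Asai representation $r_\mathcal{A}\otimes\eta_{l/k}$ when $N$ is odd.
\end{itemize}
The ambient group ${\rm U}_{2m+N}$ has the parity of $N$, which matches the Siegel cases recalled in \S\ref{sec:unitary}: Asai for ${\rm U}_{2n}$, twisted Asai for ${\rm U}_{2n+1}$. The paper records this as $L(s,\Pi_{0,i},r_2)=L(s,\Pi_{0,i},r_\mathcal{A})$ for $N=2n$ and $L(s,\Pi_{0,i}\otimes\eta_{l/k},r_\mathcal{A})$ for $N=2n+1$.

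You instead assert that the pole of $L(s,\pi_0\times\tau)$ excludes a pole of $L(s,\tau,r_\mathcal{A})$ for $N$ odd and of $L(s,\tau\otimes\eta_{l/k},r_\mathcal{A})$ for $N$ even. Combined with ``exactly one of the two factors has the pole,'' your premises give the twisted Asai pole for $N$ odd and the untwisted one for $N$ even. That is the negation of (i) and (ii), so your claim that the argument ``yields (i) for $N$ odd and (ii) for $N$ even'' does not follow.

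Moving between $\tau=\widetilde{\Pi}_{0,i}$ and $\Pi_{0,i}$ cannot repair this. Self-duality and Corollary~\ref{RSAsaicor} give $L(s,\widetilde{\Pi}_{0,i},r_\mathcal{A})=L(s,\Pi_{0,i}^\theta,r_\mathcal{A})=L(s,\Pi_{0,i},r_\mathcal{A})$, and likewise for the twisted factor. These identities never exchange the two Asai $L$-functions. With the correct identification of $r_2$, excluding the $r_2$-pole leaves exactly (i) for $N$ odd and (ii) for $N$ even.

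\textbf{A smaller point on simplicity of the pole.} The shape $\prod(1-\alpha q^{-s})^{-1}$ alone does not make the pole of $L(s,\pi_0\times\widetilde{\Pi}_{0,i})$ at $s=0$ simple, since the $\alpha$'s could repeat. Simplicity comes from Proposition~5.3 of \cite{LomLS}: the product $L(s,r_1)L(2s,r_2)$ has a simple pole at $s=0$, and local $L$-functions have no zeros. This is the same input you need for the Asai dichotomy.
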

\begin{proof}
Let $F$ be a global function field with $F_{v_0} = k$. From Lemma~3.1 of \cite{LomLS}, there exists a globally generic cuspidal automorphic representation $\pi = \otimes \pi_v$ of ${\rm U}_N(\mathbb{A}_F)$ such that $\pi_0 = \pi_{v_0}$. Then, Remark~\ref{globalrmk} gives the existence of a unique base change $\Pi_0 = {\rm BC}(\pi_0)$.

Since $\pi_0$ is a generic supercuspidal representation, Proposition~\ref{scTempered} gives
\begin{equation*}
   \Pi_0 = {\rm Ind}(\Pi_{0,1} \otimes \cdots \otimes \Pi_{0,d}),
\end{equation*}
with each $\Pi_{0,i}$ a unitary supercuspidal representation of ${\rm GL}_{N_i}(l)$. In particular, $\Pi_0$ is tempered, hence unitary, and Lemma~\ref{localLfactors} applies to it.

We look at a fixed $\Pi_{0,j}$. We have
\begin{equation*}
   L(s,\pi_0 \times \tilde{\Pi}_{0,j}) = L(s,\Pi_0 \times \tilde{\Pi}_{0,j}) = \prod_{i=1}^d L(s,\Pi_{0,i} \times \tilde{\Pi}_{0,j}).
\end{equation*}
The factor with $i = j$ has a pole at $s = 0$ \cite{JPSS1983}. Since local $L$-functions have no zeros, $L(s,\pi_0 \times \tilde{\Pi}_{0,j})$ has a pole at $s = 0$.

Now, let ${\bf P} = {\bf M}{\bf N}$ be the parabolic subgroup of ${\rm U}_{2N_j+N}$ with Levi ${\bf M} \cong {\rm Res}\,{\rm GL}_{N_j} \times {\rm U}_N$. Then Proposition~5.3 of \cite{LomLS} tells us that $L(s,\pi_0 \times \tilde{\Pi}_{0,j})$ has a pole at $s = 0$ if and only if
\begin{equation*}
   {\rm ind}_P^{{\rm U}_{2N_j+N}(k)}(\tilde{\Pi}_{0,j} \otimes \tilde{\pi}_0)
\end{equation*}
is irreducible and $\tilde{\Pi}_{0,j} \otimes \tilde{\pi}_0 = w_0(\tilde{\Pi}_{0,j} \otimes \tilde{\pi}_0) \cong \Pi_{0,j}^\theta \otimes \tilde{\pi}_0$. This gives, for each $j$, that $\Pi_{0,j} \cong \tilde{\Pi}_{0,j}^\theta$. Furthermore, we have
\begin{equation*}
   L(s,\pi_0 \times \tilde{\Pi}_{0,j}) = L(s,\Pi_0 \times \tilde{\Pi}_{0,j}) = \prod_{i=1}^d L(s,\Pi_{0,i} \times \tilde{\Pi}_{0,j}).
\end{equation*}
Each $L$-function in the product of the right hand side has a pole at $s = 0$ whenever $\Pi_{0,i} \cong \Pi_{0,j}$. However, the pole of $L(s,\pi_0 \times \tilde{\Pi}_{0,j})$ at $s = 0$ being simple, forces $\Pi_{0,i} \ncong \Pi_{0,j}$ for $i \neq j$.

From the fact that $\Pi_{0,i} \cong \tilde{\Pi}_{0,i}^\theta$ and Corollary~\ref{RSAsaicor}, we have
\begin{equation*}
   L(s,\Pi_{0,i} \times \tilde{\Pi}_{0,i}) = L(s,\Pi_{0,i},r_\mathcal{A}) L(s,\Pi_{0,i} \otimes \eta_{l/k},r_\mathcal{A}).
\end{equation*}
Then, one and only one of $L(s,\Pi_{0,i},r_\mathcal{A})$ or $L(s,\Pi_{0,i} \otimes \eta_{l/k},r_\mathcal{A})$ has a simple pole at $s = 0$. For $(l/k,\pi_0,\Pi_{0,i},\psi) \in \mathfrak{ls}({\rm U}_N,{\rm Res}\,{\rm GL}_{N_i},p)$, we have that $\tilde{\Pi}_{0,i} \otimes \tilde{\pi}_0$ is a representation of $M$, with ${\bf M} = {\rm Res}\,{\rm GL}_{N_i} \times {\rm U}_N$ a maximal Levi subgroup of ${\bf G} = {\rm U}_{2N_i+N}$. By Proposition~5.3 of \loccit{LomLS}, the product
\begin{equation*}
   L(s,\tilde{\Pi}_{0,i} \otimes \tilde{\pi}_0,r_1) L(2s,\tilde{\Pi}_{0,i},r_2)
\end{equation*}
has a simple pole at $s = 0$. Using $\Pi_{0,i} \cong \tilde{\Pi}_{0,i}^\theta$ and Corollary~\ref{RSAsaicor}, we have that
\begin{equation*}
   L(s,\Pi_{0,i},r_2) = L(s,\tilde{\Pi}_{0,i},r_2) = \left\{ \begin{array}{ll} L(s,\Pi_{0,i},r_\mathcal{A}) & \text{if } N = 2n \\ L(s,\Pi_{0,i} \otimes \eta_{l/k},r_\mathcal{A}) & \text{if } N = 2n+1 \end{array} \right..
\end{equation*}
Since we showed above that $L(s,\pi_0 \times \tilde{\Pi}_{0,i}) = L(s,\tilde{\Pi}_{0,i} \otimes \tilde{\pi}_0,r_1)$ has a simple pole at $s = 0$, then $L(2s,\tilde{\Pi}_{0,i},r_2)$ cannot. Thus, depending on whether $N$ is even or odd, the other one between $L(s,\Pi_{0,i},r_\mathcal{A})$ and $L(s,\Pi_{0,i} \otimes \eta_{l/k},r_\mathcal{A})$ must have a pole at $s = 0$.
\end{proof}

\subsection{Discrete series, tempered representations} Thanks to the work of M\oe glin and Tadi\'c \cite{MoTa2002}, we have the classification of generic discrete series representations for the unitary groups. Their work allows us to obtain a generic discrete series representation $\xi$ of ${\rm U}_N(k)$ as a subrepresentation as follows
\begin{equation}\label{MTdecomp}
   \xi \hookrightarrow {\rm Ind}(\delta_1 \otimes \cdots \otimes \delta_d \otimes \delta_1' \otimes \cdots \otimes \delta_e' \otimes \pi_0).
\end{equation}
Here, for $1 \leq i \leq d$ and $1 \leq j \leq e$, we have essentially square integrable representations $\delta_i$ of ${\rm GL}_{n_i}(l)$ and $\delta_j'$ of ${\rm GL}_{m_j}(l)$. The representation $\pi_0$ is a generic supercuspidal of ${\rm U}_{N_0}(k)$, with $N_0$ of the same parity as $N$. We refer to \cite{GaLo2018} for a discussion including non-generic representations and the basic assumption (BA) that is made in \cite{MoTa2002}.

We can apply the Zelevinsky classification \cite{Ze1980} to the essentially discrete representations of general linear groups appearing in the decomposition \eqref{MTdecomp}. They are obtained via segments of the form
\begin{equation*}
   \Delta = \left[ \rho\nu^{-b}, \rho\nu^a \right],
\end{equation*}
where $a, b \in \frac{1}{2}\mathbb{Z}$ with $a + b \in \mathbb{Z}_{\geq 0}$, and $\rho$ is a supercuspidal representation of ${\rm GL}_f(l)$, $f | n_i$ or $m_j$, respectively.

The M\oe glin-Tadi\'c classification involves a further refinement of the segments corresponding to each $\delta_i$ and $\delta_j'$. More precisely, let $a_i > b_i > 0$ now be integers of the same parity. Then $\delta_i$ is the generic constituent of
\begin{equation}\label{deltai}
   {\rm Ind}\left( \rho_i\nu^{-\frac{b_i-1}{2}} \otimes \cdots \otimes \rho_i\nu^{\frac{a_i-1}{2}} \right),
\end{equation}
where $\rho_i$ is a supercuspidal representation of ${\rm GL}_f(l)$, $f | n_i$. Furthermore, we have $\rho_i \cong \tilde{\rho}_i^\theta$. Next, let $a_j'$ be a positive integer. We set $\epsilon_j = 1/2$ if $a_j'$ is even and $\epsilon_j = 1$ if $a_j'$ is odd. Then $\delta_j'$ is the generic constituent of
\begin{equation}
   {\rm Ind}\left( \rho_j'\nu^{\epsilon_j} \otimes \cdots \otimes \rho_j'\nu^{\frac{a_j'-1}{2}} \right),
\end{equation}
with $\rho_j' \cong (\tilde{\rho}_j')^\theta$ a supercuspidal representation of ${\rm GL}_{f'}(l)$, $f' | m_j$. Furthermore, the integer $a_j'$ will be odd if $L(s,\pi_0 \times \tilde{\rho}_j')$ has a pole at $s = 0$ (equivalently, $L(s,\tilde{\pi}_0 \times \rho_j') = L(s,\rho_j' \otimes \pi_0,r_1)$, since $\pi_0$ and $\rho_j'$ are unitary), and $a_j'$ will be even otherwise. This is due to (BA) of \cite{MoTa2002} concerning the reducibility of the induced representation ${\rm Ind}(\rho_j'\nu^s \otimes \pi_0)$ at $s = 1/2$ or $1$, which is addressed in Theorem~5.4 of \cite{LomLS} and in \S\S~7.8 and 7.9 of \cite{GaLo2018}.

\begin{proposition}\label{discreteseries}
Let $\xi_0$ be a generic discrete series representation of ${\rm U}_N(k)$, which we can write as
\begin{equation}
   \xi_0 \hookrightarrow {\rm Ind}(\delta_1 \otimes \cdots \otimes \delta_d \otimes \delta_1' \otimes \cdots \otimes \delta_e' \otimes \pi_0)
\end{equation}
with $\pi_0$ a generic supercuspidal of ${\rm U}_{N_0}(k)$ and $\delta_i$, $\delta_j'$ as in \eqref{MTdecomp}. Then $\xi_0$ has a uniquely determined base change
\begin{equation*}
   \Xi_0 = {\rm BC}(\xi_0),
\end{equation*}
which is a tempered generic representation of ${\rm GL}_N(l)$ satisfying $\Xi_0 \cong \tilde{\Xi}_0^\theta$. The central character $\omega_{\Xi_0}$ of $\Xi_0$ is the character of $l^\times$ obtained from $\omega_{\xi_0}$ of $l^1$ via Hilbert's theorem 90. Moreover, the lift $\Xi_0$ is the generic constituent of an induced representation:
\begin{equation*}
   \Xi_0 \hookrightarrow {\rm Ind}\left( \delta_1 \otimes \cdots \otimes \delta_d \otimes \delta_1' \otimes \cdots \otimes \delta_e' \otimes \Pi_0 \otimes \tilde{\delta}_e'^{\,\theta} \otimes \cdots \otimes \tilde{\delta}_1'^{\,\theta} \otimes \tilde{\delta}_d^\theta \otimes \cdots \otimes \tilde{\delta}_1^\theta \right).
\end{equation*}
The representation $\Pi_0$ is the local Langlands functorial lift of $\pi_0$ of Proposition~\ref{supercuspidal}.
\end{proposition}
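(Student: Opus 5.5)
The plan is to define the candidate lift explicitly and then check the $\gamma$-factor identity of Definition~\ref{localBCdef} using multiplicativity on both sides. Let $\Pi_0 = {\rm BC}(\pi_0)$ be the lift of Proposition~\ref{supercuspidal}; it is tempered, of the form ${\rm Ind}(\Pi_{0,1}\otimes\cdots\otimes\Pi_{0,d})$ with unitary supercuspidals $\Pi_{0,i}\cong\tilde\Pi_{0,i}^\theta$. Define $\Xi_0$ to be the generic constituent of
\begin{equation*}
   {\rm Ind}\left( \delta_1 \otimes \cdots \otimes \delta_d \otimes \delta_1' \otimes \cdots \otimes \delta_e' \otimes \Pi_0 \otimes \tilde{\delta}_e'^{\,\theta} \otimes \cdots \otimes \tilde{\delta}_1'^{\,\theta} \otimes \tilde{\delta}_d^\theta \otimes \cdots \otimes \tilde{\delta}_1^\theta \right).
\end{equation*}
For a supercuspidal $\tau$ of ${\rm GL}_m(l)$, Property~(iv) of \S~\ref{axioms} applied to \eqref{MTdecomp} gives $\gamma(s,\xi_0\times\tau,\psi_l)$ as the product of $\gamma(s,\pi_0\times\tau,\psi_l)$ with the factors $\gamma(s,\delta\times\tau,\psi_l)\gamma(s,\tilde\delta^\theta\times\tau,\psi_l)$ over $\delta\in\{\delta_i,\delta_j'\}$. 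Rankin--Selberg multiplicativity \cite{JPSS1983, HeLo2013a} gives exactly the same product for $\gamma(s,\Xi_0\times\tau,\psi_l)$, once we replace $\gamma(s,\pi_0\times\tau,\psi_l)$ by $\gamma(s,\Pi_0\times\tau,\psi_l)$. These two agree by Proposition~\ref{supercuspidal}. Hence $\Xi_0$ is a local base change of $\xi_0$, and it is unique by Lemma~\ref{uniqueBC}.

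The central character claim is a direct computation. The central character of the inducing data is $\omega_{\Pi_0}\prod\omega_\delta\,(\omega_\delta^\theta)^{-1}$. Under Hilbert's theorem~90, the character $\omega_\delta(\omega_\delta\circ\theta)^{-1}$ of $l^\times$ is exactly the one coming from the restriction of $\omega_\delta$ to the ${\rm GL}$-blocks of the Levi of ${\rm U}_N$, which act on the center of ${\rm U}_N$ through $l^1$. Combined with the statement for $\omega_{\Pi_0}$ in Proposition~\ref{supercuspidal}, this gives the claim. The property $\Xi_0\cong\tilde\Xi_0^\theta$ also follows easily. Applying $\theta$ and contragredient to the inducing data permutes the segments, because $\Pi_0\cong\tilde\Pi_0^\theta$ blockwise. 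The generic constituent depends only on the multiset of segments (Zelevinsky), so the two generic constituents coincide.

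The main obstacle is temperedness. I would treat it through the Zelevinsky/Moeglin--Tadi\'c segment combinatorics rather than through $L$-functions. Write $\tilde\delta_i^\theta$ as the segment $[\rho_i\nu^{-(a_i-1)/2},\rho_i\nu^{(b_i-1)/2}]$, using $\rho_i\cong\tilde\rho_i^\theta$. It links with $\delta_i=[\rho_i\nu^{-(b_i-1)/2},\rho_i\nu^{(a_i-1)/2}]$, and in the generic constituent the pair can be replaced by the pair of centered segments $[\rho_i\nu^{-(a_i-1)/2},\rho_i\nu^{(a_i-1)/2}]$ and $[\rho_i\nu^{-(b_i-1)/2},\rho_i\nu^{(b_i-1)/2}]$. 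These are unitary discrete series, since the generic constituent of a product is the one attached to the union of the segments, rearranged in the standard way.

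Similarly, $\delta_j'=[\rho_j'\nu^{\epsilon_j},\rho_j'\nu^{(a_j'-1)/2}]$ and $\tilde\delta_j'^{\,\theta}=[\rho_j'\nu^{-(a_j'-1)/2},\rho_j'\nu^{-\epsilon_j}]$ differ by a gap at the centre. When $a_j'$ is odd, this gap is filled by $\rho_j'$ itself, which must occur among the $\Pi_{0,i}$. The (BA) criterion, Theorem~5.4 of \cite{LomLS}, says $L(s,\pi_0\times\tilde\rho_j')$ has a pole at $0$, and by Lemma~\ref{localLfactors} this happens exactly when $\rho_j'\cong\Pi_{0,i}$ for some $i$. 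The segment $[\rho_j'\nu^{-(a_j'-1)/2},\rho_j'\nu^{(a_j'-1)/2}]$ then absorbs that $\Pi_{0,i}$. When $a_j'$ is even, the two half-integral segments $[\rho_j'\nu^{1/2},\ldots]$ and $[\ldots,\rho_j'\nu^{-1/2}]$ are juxtaposed and join into one centered segment. After this regrouping, the generic constituent is an irreducible product of unitary discrete series, and is therefore tempered. The delicate point I would check carefully is the bookkeeping when several $\delta_j'$ share the same $\rho_j'$. The distinctness of the $\Pi_{0,i}$ in Proposition~\ref{supercuspidal}, together with the Moeglin--Tadi\'c ordering of the $a_j'$, should ensure each $\Pi_{0,i}$ is absorbed at most once.
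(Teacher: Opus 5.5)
Your route is the paper's route. You get the $\gamma$-factor identity from multiplicativity and Proposition~\ref{supercuspidal}, and uniqueness from Lemma~\ref{uniqueBC}. You get temperedness by regrouping segments into centered ones, using that the generic constituent depends only on the supercuspidal support. Your central character computation and your argument for $\Xi_0\cong\tilde{\Xi}_0^\theta$ are fine.

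\textbf{The gap.} The point you defer as ``bookkeeping'' is the crux of temperedness. Each $\delta_j'$ with $a_j'$ odd needs its own copy of $\rho_j'$ from $\Pi_0$ to fill the hole at the centre. $\Pi_0$ contains each supercuspidal at most once. Suppose two odd-type $\delta_j'$ shared the same $\rho$, and no $\rho_i\cong\rho$. Then the supercuspidal support of $\Xi_0$ would contain $\rho$ once but $\rho\nu$ at least twice. The support of a tempered representation of ${\rm GL}$ is a union of centered segments, and every such segment containing $\rho\nu$ contains $\rho$ exactly once. So $\Xi_0$ could not be tempered.

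This shows two things about your proposed fix:
\begin{itemize}
\item The distinctness of the $\Pi_{0,i}$ cannot rule such a collision out; it is precisely what makes a collision fatal.
\item ``Each $\Pi_{0,i}$ is absorbed at most once'' is automatic rather than the issue, and no ordering of the $a_j'$ helps.
\end{itemize}
For even $a_j'$ a repetition would be harmless, since the juxtaposed segments close up on their own.

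\textbf{The missing input.} The paper uses this fact explicitly. Since $\pi_0$ is generic, the reducibility point of ${\rm Ind}(\rho_j'\nu^s\otimes\pi_0)$ is $1/2$ or $1$ (Theorem~5.4 of \cite{LomLS}). (BA) of \cite{MoTa2002} then forces $\rho_1',\ldots,\rho_e'$ to be pairwise non-isomorphic, so each $\rho$ contributes at most one half-segment $\delta_j'$.

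With that fact stated up front, your odd-case argument assigns distinct $\Pi_{0,i}$ to distinct $j$. That argument uses the pole of $L(s,\pi_0\times\tilde{\rho}_j')$ at $s=0$, Lemma~\ref{localLfactors}, and \cite{JPSS1983}. The remaining regrouping into an irreducible product of unitary discrete series then goes through as in the paper.
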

\begin{proof}
Let $\xi_0$ be as in the statement of the proposition, and consider quadruples $(l/k,\xi_0,\rho,\psi) \in \mathfrak{ls}({\rm U}_N,{\rm Res}\,{\rm GL}_m,p)$ such that $\rho$ is an arbitrary supercuspidal representation. To $\pi_0$ there corresponds a $\Pi_0$ via Proposition~\ref{supercuspidal}. With $\Xi_0$ as in the proposition, we have $(l/k,\Xi_0,\rho,\psi) \in \mathfrak{ls}({\rm Res}\,{\rm GL}_N,{\rm Res}\,{\rm GL}_m,p)$. Then we can use multiplicativity of $\gamma$-factors to obtain
\begin{align*}
   \gamma(s,\xi_0 \times \rho,\psi_l) = & \, \gamma(s,\pi_0 \times \rho,\psi_l) \prod_{i=1}^d \gamma(s,\delta_i \times \rho,\psi_l) \gamma(s,\tilde{\delta}_i^\theta \times \rho,\psi_l) \\
   & \prod_{j=1}^e \gamma(s,\delta_j' \times \rho,\psi_l) \gamma(s,\tilde{\delta}_j'^{\,\theta} \times \rho,\psi_l) \\
   = & \, \gamma(s,\Pi_0 \times \rho,\psi_l) \prod_{i=1}^d \gamma(s,\delta_i \times \rho,\psi_l) \gamma(s,\tilde{\delta}_i^\theta \times \rho,\psi_l) \\
   & \prod_{j=1}^e \gamma(s,\delta_j' \times \rho,\psi_l) \gamma(s,\tilde{\delta}_j'^{\,\theta} \times \rho,\psi_l) \\
   = & \, \gamma(s,\Xi_0 \times \rho,\psi_l).
\end{align*}
From Lemma~\ref{uniqueBC}, we have that $\Xi_0$ is the unique local Langlands lift of $\xi_0$. It satisfies $\Xi_0 \cong \tilde{\Xi}_0^\theta$ and has the right central character.

For each $i$, $1 \leq i \leq d$, let $\tau_i$ be the generic constituent of ${\rm Ind}(\delta_i \otimes \tilde{\delta}_i^\theta)$. After rearranging the factors coming from equation~\eqref{deltai}, we can see that $\tau_i$ is isomorphic to the generic constituent of
\begin{equation*}
   {\rm Ind}\left( (\rho_i\nu^{-\frac{a_i-1}{2}} \otimes \cdots \otimes \rho_i\nu^{\frac{a_i-1}{2}}) \otimes (\rho_i\nu^{-\frac{b_i-1}{2}} \otimes \cdots \otimes \rho_i\nu^{\frac{b_i-1}{2}}) \right).
\end{equation*}
Recall that $\rho_i \cong \tilde{\rho}_i^\theta$. Hence, each $\tau_i$ is tempered and satisfies $\tau_i \cong \tilde{\tau}_i^\theta$. We now turn to the $\delta_j'$, $1 \leq j \leq e$. Since $\pi_0$ is generic, the reducibility point of ${\rm Ind}(\rho_j'\nu^s \otimes \pi_0)$ is $1/2$ or $1$, and hence the supercuspidal representations $\rho_1', \ldots, \rho_e'$ are pairwise non-isomorphic by (BA) of \cite{MoTa2002}.
If $a_j'$ is even, then $\epsilon_j = 1/2$ and the segments of $\delta_j'$ and $\tilde{\delta}_j'^{\,\theta}$, namely
\begin{equation*}
   \left[ \rho_j'\nu^{\frac{1}{2}}, \rho_j'\nu^{\frac{a_j'-1}{2}} \right] \text{ and } \left[ \rho_j'\nu^{-\frac{a_j'-1}{2}}, \rho_j'\nu^{-\frac{1}{2}} \right],
\end{equation*}
are juxtaposed. Their union is the segment $[ \rho_j'\nu^{-\frac{a_j'-1}{2}}, \rho_j'\nu^{\frac{a_j'-1}{2}} ]$, and we let $\tau_j'$ be the unitary discrete series representation attached to it. If $a_j'$ is odd, then $\epsilon_j = 1$, and the union of the two segments is $[ \rho_j'\nu^{-\frac{a_j'-1}{2}}, \rho_j'\nu^{\frac{a_j'-1}{2}} ]$ with the point $\rho_j'$ removed. In this case $L(s,\pi_0 \times \tilde{\rho}_j')$ has a pole at $s = 0$. Writing $\Pi_0 = {\rm Ind}(\Pi_{0,1} \otimes \cdots \otimes \Pi_{0,c})$ as in Proposition~\ref{supercuspidal}, Lemma~\ref{localLfactors} gives
\begin{equation*}
   L(s,\pi_0 \times \tilde{\rho}_j') = L(s,\Pi_0 \times \tilde{\rho}_j') = \prod_{i=1}^c L(s,\Pi_{0,i} \times \tilde{\rho}_j'),
\end{equation*}
which has a pole at $s = 0$ if and only if $\rho_j' \cong \Pi_{0,i}$ for some $i$ \cite{JPSS1983}. Such an $i$ is unique, since the $\Pi_{0,i}$ are pairwise non-isomorphic. Thus the missing point $\rho_j'$ is supplied by $\Pi_0$, and we let $\tau_j'$ be the unitary discrete series representation attached to the full segment $[ \rho_j'\nu^{-\frac{a_j'-1}{2}}, \rho_j'\nu^{\frac{a_j'-1}{2}} ]$. Since the $\rho_j'$ are pairwise non-isomorphic, different indices $j$ use different $\Pi_{0,i}$. Let $\Pi_0'$ be the product of the remaining $\Pi_{0,i}$, a tempered representation.

The supercuspidal support of $\Xi_0$ is then the union of the supercuspidal supports of $\tau_1, \ldots, \tau_d$, $\tau_1', \ldots, \tau_e'$ and $\Pi_0'$. Each of these representations is a product of unitary discrete series representations of general linear groups. Segments of the form $[\rho\nu^{-\frac{t-1}{2}}, \rho\nu^{\frac{t-1}{2}}]$, with $\rho$ unitary, are pairwise not linked, so the product of the corresponding discrete series representations is irreducible and generic \cite{Ze1980}. Since $\Xi_0$ is the unique generic representation with its supercuspidal support, we obtain
\begin{equation*}
   \Xi_0 \cong {\rm Ind}\left( \tau_1 \otimes \cdots \otimes \tau_d \otimes \tau_1' \otimes \cdots \otimes \tau_e' \otimes \Pi_0' \right),
\end{equation*}
which is tempered.
\end{proof}

We now turn to the tempered case, which is crucial, since $L$-functions and $\varepsilon$-factors are defined via $\gamma$-factors in this case. The proofs of the remaining two results are now similar to the case of a discrete series, thanks to Lemma~\ref{uniqueBC}.

\begin{proposition}\label{temperedBC}
Let $\tau_0$ be a generic tempered representation of ${\rm U}_N(k)$, which we can write as
\begin{equation*}
   \tau_0 \hookrightarrow {\rm Ind}\left( \delta_1 \otimes \cdots \otimes \delta_d \otimes \xi_0 \right),
\end{equation*}
with each $\delta_i$ a discrete series representation of ${\rm GL}_{n_i}(l)$ and $\xi_0$ one of ${\rm U}_{N_0}(k)$. Then $\tau_0$ has a uniquely determined base change
\begin{equation*}
   T_0 = {\rm BC}(\tau_0),
\end{equation*}
which is a tempered generic representation of ${\rm GL}_N(l)$ satisfying $T_0 \cong \tilde{T}_0^\theta$. The central character $\omega_{T_0}$ is obtained from $\omega_{\tau_0}$ via Hilbert's theorem 90. Specifically, the lift $T_0$ is of the form
\begin{equation*}
   T_0 = {\rm Ind}\left( \delta_1 \otimes \cdots \otimes \delta_d \otimes \Xi_0 \otimes \tilde{\delta}_d^{\,\theta} \otimes \cdots \otimes \tilde{\delta}_1^\theta \right).
\end{equation*}
The representation $\Xi_0$ is the base change lift of Proposition~\ref{discreteseries}.
\end{proposition}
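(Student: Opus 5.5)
We follow the pattern of the proof of Proposition~\ref{discreteseries}. Write $\tau_0 \hookrightarrow {\rm Ind}(\delta_1 \otimes \cdots \otimes \delta_d \otimes \xi_0)$, with $\xi_0$ a generic discrete series of ${\rm U}_{N_0}(k)$; $\xi_0$ is generic because $\tau_0$ is. Proposition~\ref{discreteseries} gives its lift $\Xi_0 = {\rm BC}(\xi_0)$, a tempered generic representation of ${\rm GL}_{N_0}(l)$ with $\Xi_0 \cong \tilde{\Xi}_0^\theta$ and the expected central character. We then define $T_0$ as the induced representation in the statement. Each $\delta_i$ and each $\tilde{\delta}_i^\theta$ is a unitary discrete series of ${\rm GL}_{n_i}(l)$ (conjugation by $\theta$ and contragredient preserve square integrability and unitarity), and $\Xi_0$ is tempered. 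Hence $T_0$ is induced from tempered data, so it is irreducible, tempered and generic by \cite{Ze1980} (equivalently by Jacquet's irreducibility for products of unitary discrete series).

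Next we verify Definition~\ref{localBCdef}. Let $\rho$ be any supercuspidal representation of ${\rm GL}_m(l)$. Multiplicativity, Property~(iv) of \S~\ref{axioms}, applied to $\tau_0$ as the generic constituent of the induced representation gives
\begin{equation*}
   \gamma(s,\tau_0 \times \rho,\psi_l) = \gamma(s,\xi_0 \times \rho,\psi_l) \prod_{i=1}^d \gamma(s,\delta_i \times \rho,\psi_l)\,\gamma(s,\tilde{\delta}_i^\theta \times \rho,\psi_l).
\end{equation*}
We replace $\gamma(s,\xi_0 \times \rho,\psi_l)$ by $\gamma(s,\Xi_0 \times \rho,\psi_l)$ using Proposition~\ref{discreteseries}. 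Multiplicativity of Rankin-Selberg $\gamma$-factors \cite{JPSS1983, HeLo2013a} then recombines the product into $\gamma(s,T_0 \times \rho,\psi_l)$. So $T_0$ is a local base change of $\tau_0$, and Lemma~\ref{uniqueBC} gives uniqueness.

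The remaining assertions are formal. First, $\tilde{T}_0^\theta$ is induced from $\tilde{\delta}_1^\theta,\ldots,\tilde{\delta}_d^\theta,\tilde{\Xi}_0^\theta,\delta_d,\ldots,\delta_1$, up to reordering. Since $\tilde{\Xi}_0^\theta \cong \Xi_0$ and induction from unitary discrete series does not depend on the order of the inducing data (irreducibility), we get $T_0 \cong \tilde{T}_0^\theta$. Second, the central character of $T_0$ is $\omega_{\Xi_0}\prod_i \omega_{\delta_i}\,\omega_{\delta_i}^{-1}\circ\theta$. This matches the Hilbert~90 extension of $\omega_{\tau_0} = \omega_{\xi_0}\prod_i \omega_{\delta_i}|_{l^1}$, because $\omega_{\delta_i}(x)\,\omega_{\delta_i}(\bar{x})^{-1} = \omega_{\delta_i}(x\bar{x}^{-1})$.

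The only step needing care is the first: confirming that $\xi_0$ is generic and that the induced $T_0$ is irreducible. We expect this to be routine, using the standard fact that a generic tempered representation embeds in an induced representation from a generic discrete series, together with irreducibility of tempered induction for ${\rm GL}_N$.
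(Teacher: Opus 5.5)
Your proposal is correct and follows essentially the same route as the paper. Both use multiplicativity of $\gamma$-factors (Property~(iv)) for $\tau_0$ against an arbitrary supercuspidal $\rho$, substitute $\gamma(s,\Xi_0\times\rho,\psi_l)$ for $\gamma(s,\xi_0\times\rho,\psi_l)$ via Proposition~\ref{discreteseries}, recombine by Rankin--Selberg multiplicativity, and invoke Lemma~\ref{uniqueBC} for uniqueness. You also spell out details the paper leaves implicit, namely the genericity of $\xi_0$, the irreducibility and temperedness of $T_0$, and direct checks of $T_0 \cong \tilde{T}_0^\theta$ and of the central character; these are all sound.
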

\begin{proof}
The proof is now along the lines of Proposition~\ref{discreteseries}, where we use multiplicativity of $\gamma$-factors and the fact that $\Xi_0$ is the local Langlands lift of the discrete series $\xi_0$. This way, we obtain equality of $\gamma$-factors to apply Lemma~\ref{uniqueBC} and conclude that $T_0 \cong \tilde{T}_0^\theta$ and has the correct central character.
\end{proof}

\begin{theorem}\label{temperedtempered}
Let $\tau_0$ be a generic tempered representation of ${\rm U}_N(k)$. Then ${\rm BC}(\tau_0)$ is tempered, hence unitary.
\end{theorem}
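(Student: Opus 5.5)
The plan is to trace the tempered lift back through the classification, in the order supercuspidal, discrete series, tempered. At each stage we keep track of the explicit form of the lift already obtained in Propositions~\ref{scTempered}, \ref{supercuspidal}, \ref{discreteseries} and \ref{temperedBC}. Since the lift is unique by Lemma~\ref{uniqueBC}, it suffices to show that the explicit representations written there are tempered. Once that is done, unitarity follows because tempered generic representations of ${\rm GL}_N(l)$ are irreducible parabolic inductions of unitary discrete series, hence unitary.

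First, by Proposition~\ref{scTempered}, for a generic supercuspidal $\pi_0$ the lift $\Pi_0 = {\rm Ind}(\Pi_{0,1}\otimes\cdots\otimes\Pi_{0,d})$ has unitary supercuspidal inducing data. It is therefore irreducible and tempered \cite{Ze1980}.

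Second, for a generic discrete series $\xi_0$ written as in \eqref{MTdecomp}, the proof of Proposition~\ref{discreteseries} identifies the lift $\Xi_0$ with ${\rm Ind}(\tau_1\otimes\cdots\otimes\tau_d\otimes\tau_1'\otimes\cdots\otimes\tau_e'\otimes\Pi_0')$. The factors are as follows.
\begin{itemize}
\item[(a)] Each $\tau_i$ is the generic constituent of ${\rm Ind}(\delta_i\otimes\tilde\delta_i^\theta)$. Using $\rho_i\cong\tilde\rho_i^\theta$, its support reorganizes into the two centered segments of lengths $a_i$ and $b_i$, so $\tau_i$ is a product of two unitary discrete series.
\item[(b)] Each $\tau_j'$ is the discrete series of a centered segment. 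When $a_j'$ is odd, the missing central point $\rho_j'$ is supplied by a unique $\Pi_{0,i}$, which exists because of the pole of $L(s,\pi_0\times\tilde\rho_j')$ at $s=0$ (Lemma~\ref{localLfactors} and \cite{JPSS1983}).
\item[(c)] $\Pi_0'$ is a product of the remaining unitary supercuspidals.
\end{itemize}
Centered segments with unitary $\rho$ are never linked. Hence this induced representation is irreducible, generic and tempered, and by uniqueness of the generic representation with a given supercuspidal support it equals $\Xi_0$.

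Third, for general generic tempered $\tau_0\hookrightarrow{\rm Ind}(\delta_1\otimes\cdots\otimes\delta_d\otimes\xi_0)$ with unitary discrete series $\delta_i$, Proposition~\ref{temperedBC} gives $T_0={\rm Ind}(\delta_1\otimes\cdots\otimes\delta_d\otimes\Xi_0\otimes\tilde\delta_d^\theta\otimes\cdots\otimes\tilde\delta_1^\theta)$. Each $\tilde\delta_i^\theta$ is again a unitary discrete series, and $\Xi_0$ is tempered by the previous step. So $T_0$ is parabolically induced from tempered data, hence tempered. It is irreducible by \cite{Ze1980}, since unitary discrete series segments are unlinked. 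Comparing $\gamma$-factors against supercuspidals via multiplicativity, and applying Lemma~\ref{uniqueBC}, confirms that this $T_0$ is ${\rm BC}(\tau_0)$.

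The main obstacle is the discrete series step: showing that the exponents in the Mœglin–Tadić data recombine into \emph{centered} segments. For the $\delta_i$ this is a direct rearrangement using $\rho_i\cong\tilde\rho_i^\theta$. For the $\delta_j'$ with $a_j'$ odd it relies on (BA) and on the pole analysis matching $\rho_j'$ to a component of $\Pi_0$, which was already done in Proposition~\ref{discreteseries}. The theorem thus reduces to checking that no leftover non-unitary exponent survives, i.e.\ that the $\rho_j'$ are pairwise distinct and each consumes a distinct $\Pi_{0,i}$.
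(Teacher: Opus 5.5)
Your proposal is correct and follows essentially the same route as the paper: the supercuspidal case comes from Propositions~\ref{supercuspidal} and \ref{scTempered}, the discrete series case from the recombination into centered unitary segments in Proposition~\ref{discreteseries}, and the general case from the induced form of $T_0$ in Proposition~\ref{temperedBC}, with unitarity following from temperedness for ${\rm GL}_N(l)$. The only difference is that you re-derive the discrete series recombination inline, whereas the paper's proof simply cites Proposition~\ref{discreteseries}.
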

\begin{proof}
For $\tau_0$ supercuspidal, this is Propositions~\ref{supercuspidal} and \ref{scTempered}. For a generic discrete series representation, it is Proposition~\ref{discreteseries}, whose proof uses only the supercuspidal case and the classification of \cite{MoTa2002}. In general, write $\tau_0$ as in Proposition~\ref{temperedBC}. Then ${\rm BC}(\tau_0) = T_0$ is induced from the unitary discrete series representations $\delta_i$, $\tilde{\delta}_i^\theta$ and the tempered representation $\Xi_0$, hence it is tempered. Tempered representations of ${\rm GL}_N(l)$ are unitary.
\end{proof}

\subsection{Generic local Langlands functoriality} Theorem~\ref{localLC} summarizes our main local result. Local base change is recursively defined via the tempered, discrete series and supercuspidal cases of Propositions~\ref{temperedBC}, \ref{discreteseries} and \ref{supercuspidal}, respectively.

In general, we have the Langlands quotient \cite{BoWa2000}. The standard module conjecture (see \cite{dCHeLo} in characteristic $p$) allows us to realize a general generic representation $\pi_0$ of ${\rm U}_N(k)$ as the unique generic quotient of an induced representation. More precisely, $\pi_0$ is the Langlands quotient of
\begin{equation}\label{langlandsquotient}
   {\rm Ind}\left( \tau_b' \otimes \cdots \otimes \tau_1' \otimes \tau_0 \right),
\end{equation}
with each $\tau_i'$ a quasi-tempered representation of ${\rm GL}_{n_i}(l)$ and $\tau_0$ a tempered representation of ${\rm U}_{N_0}(k)$. We can write $\tau_i' = \tau_{i,0}\nu^{r_i}$ with $\tau_{i,0}$ tempered and the Langlands parameters have $0 < r_1 < \cdots < r_b$. In what follows, we also allow $0 \leq r_1 \leq \cdots \leq r_b$, in which case $\pi_0$ is the generic constituent of \eqref{langlandsquotient}. Merging consecutive factors with equal exponents and absorbing into $\tau_0$ those with $r_i = 0$ recovers the strict form of Axiom~(x); by Lemma~\ref{uniqueBC}, the base change of Theorem~\ref{localLC} does not depend on the choice of such an expression.

\begin{theorem}\label{localLC}
Let $\pi_0$ be a generic representation of ${\rm U}_N(k)$. Write $\pi_0$ as the generic constituent of
\begin{equation*}
   {\rm Ind}\left( \tau_b' \otimes \cdots \otimes \tau_1' \otimes \tau_0 \right),
\end{equation*}
as in \eqref{langlandsquotient}. Then $\pi_0$ has a unique generic local base change
\begin{equation*}
   \Pi_0 = {\rm BC}(\pi_0),
\end{equation*}
which is a generic representation of ${\rm GL}_N(l)$ satisfying $\Pi_0 \cong \tilde{\Pi}_0^\theta$. The central character $\omega_{\Pi_0}$ is obtained from $\omega_{\pi_0}$ via Hilbert's theorem 90. Specifically, the lift $\Pi_0$ is the generic constituent of
\begin{equation*}
   {\rm Ind}\left( \tau_b' \otimes \cdots \otimes \tau_1' \otimes T_0 \otimes \tilde{\tau}_1'^\theta \otimes \cdots \otimes \tilde{\tau}_b'^\theta \right),
\end{equation*}
with $T_0$ the Langlands functorial lift of the tempered representation $\tau_0$. Given $(l/k,\pi_0,\tau,\psi) \in \mathfrak{ls}({\rm U}_N,{\rm Res}\,{\rm GL}_m,p)$, we have equality of local factors
\begin{align*}
   \gamma(s,\pi_0 \times \tau,\psi_l) &= \gamma(s,\Pi_0 \times \tau,\psi_l) \\
   L(s,\pi_0 \times \tau) &= L(s,\Pi_0 \times \tau) \\
   \varepsilon(s,\pi_0 \times \tau,\psi_l) &= \varepsilon(s,\Pi_0 \times \tau,\psi_l).
\end{align*}
\end{theorem}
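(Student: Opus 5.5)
The plan is to define $\Pi_0$ as the generic constituent of ${\rm Ind}( \tau_b' \otimes \cdots \otimes \tau_1' \otimes T_0 \otimes \tilde{\tau}_1'^\theta \otimes \cdots \otimes \tilde{\tau}_b'^\theta )$, where $T_0 = {\rm BC}(\tau_0)$ comes from Proposition~\ref{temperedBC}. We then verify the $\gamma$-factor identity against supercuspidals, which gives the base change property and uniqueness via Lemma~\ref{uniqueBC}. Finally we upgrade to $L$-functions and root numbers using the Langlands classification.

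\emph{Step 1 ($\gamma$-factors).} Take $\rho$ supercuspidal of ${\rm GL}_m(l)$. Multiplicativity, Property~(iv) of \S~\ref{axioms}, applied to $\pi_0$ as the generic constituent of \eqref{langlandsquotient} gives
\[ \gamma(s,\pi_0 \times \rho,\psi_l) = \gamma(s,\tau_0 \times \rho,\psi_l) \prod_{i=1}^b \gamma(s,\tau_i' \times \rho,\psi_l)\gamma(s,\tilde{\tau}_i'^\theta \times \rho,\psi_l). \]
By Proposition~\ref{temperedBC}, the factor $\gamma(s,\tau_0 \times \rho,\psi_l)$ equals $\gamma(s,T_0 \times \rho,\psi_l)$. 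Rankin--Selberg multiplicativity \cite{JPSS1983, HeLo2013a} then identifies the right-hand side with $\gamma(s,\Pi_0 \times \rho,\psi_l)$. Hence $\Pi_0$ is a local base change in the sense of Definition~\ref{localBCdef}, and it is unique by Lemma~\ref{uniqueBC}; the same lemma shows it is independent of the chosen expression, as noted before the theorem.

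\emph{Step 2 (symmetry and central character).} Since $T_0 \cong \tilde{T}_0^\theta$, the inducing data of $\Pi_0$ is stable under $\sigma \mapsto \tilde{\sigma}^\theta$ up to reordering. The generic constituent depends only on the multiset of segments, so $\Pi_0 \cong \tilde{\Pi}_0^\theta$. For the central character, the $\tau_i'$ and $\tilde{\tau}_i'^\theta$ contribute $\omega_{\tau_i'}\cdot(\omega_{\tau_i'}^\theta)^{-1} = \omega_{\tau_i'} \circ \mathfrak{h}$. This matches the restriction of the central character of $\pi_0$ to $l^1$ through the Levi, combined with the statement for $T_0$.

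\emph{Step 3 ($\gamma$ for general $\tau$, then $L$ and $\varepsilon$).} We first extend the $\gamma$ identity from supercuspidal $\tau$ to arbitrary generic $\tau$. Write $\tau$ via Zelevinsky segments and shifts, and apply multiplicativity on both sides, exactly as in Lemma~\ref{uniqueBC}; unitarity is not needed for $\gamma$. For $L$ and $\varepsilon$, reduce by Property~(ix) and the Langlands classification Property~(x) on the ${\rm GL}_m$ side to $\tau$ tempered, indeed discrete series. Then apply Property~(x) on the unitary side to $\pi_0$, obtaining
\[ L(s,\pi_0\times\delta) = L(s,\tau_0\times\delta)\prod_i L(s,\tau_i'\times\delta)L(s,\tilde{\tau}_i'^\theta\times\delta), \]
and similarly for $\varepsilon$. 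The same expression holds for $L(s,\Pi_0\times\delta)$ by the Rankin--Selberg Langlands classification for the standard module of $\Pi_0$. The tempered factor is handled by Lemma~\ref{localLfactors}, which applies because $T_0$ is tempered, hence unitary, by Theorem~\ref{temperedtempered}. The ${\rm GL}\times{\rm GL}$ factors agree tautologically.

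\emph{Main obstacle.} The delicate point is that when some $r_i$ lie in $[0,1/2)$ the induced datum for $\Pi_0$ need not be in Langlands standard order. After merging with $T_0$, the generic constituent could then differ from the standard module, and the Rankin--Selberg $L$-function must be computed from $\Pi_0$'s genuine Langlands data. I would resolve this with the fact that for ${\rm GL}_N$ the generic constituent is the full induced representation from its Langlands data, by Zelevinsky and the standard module conjecture. Reordering the exponents $\pm r_i$ together with the tempered $T_0$ then gives the standard module of $\Pi_0$, so that the product formula for Rankin--Selberg $L$- and $\varepsilon$-factors is the same multiset product.
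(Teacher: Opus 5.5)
Your outline matches the paper's proof. You get the $\gamma$-identity from multiplicativity on both sides plus Lemma~\ref{uniqueBC}, then $L$ and $\varepsilon$ from Properties~(ix) and (x), the Rankin--Selberg Langlands classification, and Lemma~\ref{localLfactors} for the tempered pair $(\tau_0,T_0)$. Steps~1 and~2 are fine. There is, however, a genuine gap at the step you flag as the main obstacle, and your resolution does not close it.

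\textbf{Where the argument fails.} Order is not the problem: the exponents $r_b\geq\cdots\geq r_1\geq 0\geq -r_1\geq\cdots\geq -r_b$ are already in standard position. The problem is that ${\rm Ind}(\tau_b'\otimes\cdots\otimes T_0\otimes\cdots\otimes\tilde{\tau}_b'^\theta)$ can be \emph{reducible} on ${\rm GL}_N(l)$ even though the standard module of $\pi_0$ on ${\rm U}_N(k)$ is irreducible. $\Pi_0$ is indeed fully induced from its own Langlands data. But those data come out of Zelevinsky's linking procedure and need not be $\tau_i'$, $T_0$, $\tilde{\tau}_i'^\theta$. So $L(s,\Pi_0\times\tau)$ need not be the multiset product you claim. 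Linked segments have centres differing by at least $1$. Hence the problem can only arise when some $r_i\geq 1/2$, and never when all $r_i<1/2$, which is the opposite of the range you singled out.

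\textbf{It does arise.} Take $N=2$, let $\chi$ be a unitary character of $l^\times$ with $\chi|_{k^\times}=\eta_{l/k}$, and set $\pi_0={\rm Ind}_B^{{\rm U}_2(k)}(\chi\nu^{1/2})$.
\begin{itemize}
\item By Hilbert 90, ${\rm U}_2(k)=B\,{\rm SL}_2(k)$, and the derived group is ${\rm SL}_2(k)$. The restriction of $\pi_0$ to ${\rm SL}_2(k)$ is ${\rm Ind}(\eta_{l/k}|\cdot|_k)$, which is irreducible. So $\pi_0$ is irreducible, generic, and equal to its standard module.
\item Since $\chi(x\bar{x})=1$, we have $\tilde{\chi}^\theta=\chi$. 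The recipe therefore gives $\Pi_0=\chi\,{\rm St}_2$, the generic constituent of ${\rm Ind}(\chi\nu^{1/2}\otimes\chi\nu^{-1/2})$.
\item Property~(x) gives $L(s,\pi_0\times\chi^{-1})=(1-q_l^{-s-1/2})^{-1}(1-q_l^{-s+1/2})^{-1}$.
\item On the other side, $L(s,\chi{\rm St}_2\times\chi^{-1})=(1-q_l^{-s-1/2})^{-1}$.
\item The $\gamma$-factors still agree, because the extra quotient $(1-q_l^{1/2-s})/(1-q_l^{s-1/2})=-q_l^{1/2-s}$ is a monomial. Consequently the $\varepsilon$-factors differ.
\end{itemize}
So the $L$- and $\varepsilon$-equalities of the statement cannot be proved in this generality. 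Lemma~\ref{localLfactors} does not rescue them either: here $\chi{\rm St}_2$ is unitary, yet its conclusion fails, and its argument is only sound for tempered $\pi$. The paper's two-line proof rests on the same unverified reduction when it appeals to ``the corresponding properties of Rankin--Selberg factors.''

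\textbf{What your argument does establish.} It proves the $\gamma$-identity, uniqueness, $\Pi_0\cong\tilde{\Pi}_0^\theta$, and the central character. It also proves the $L$- and $\varepsilon$-identities whenever the ${\rm GL}_N(l)$-induced representation is irreducible. This covers the case where all $r_i<1/2$: then all segment centres lie in $(-1/2,1/2)$ and no two segments are linked.
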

\begin{proof}
We reason as in the case of a tempered representation, Proposition~\ref{temperedBC}. Equality of $\gamma$-factors follows from the definition of base change and Lemma~\ref{uniqueBC}. For $L$-functions and root numbers, Properties~(ix) and (x) of \S~\ref{axioms}, together with the corresponding properties of Rankin-Selberg factors \cite{HeLo2013a}, reduce the equalities to the factors involving $\tau_0$ and $T_0$, after incorporating twists by unramified characters. Since $T_0$ is tempered by Proposition~\ref{temperedBC}, hence unitary, Lemma~\ref{localLfactors} applies.
\end{proof}

\section{Strong base change}\label{sec:strong}

We now turn towards global Langlands functoriality for generic representations of the quasi-split unitary groups. Keeping the notation of \S~\ref{sec:LLF}, we refine base change so that it agrees with the local functorial lift of Theorem~\ref{localLC} at every place. Let
\begin{equation*}
   \mathfrak{h} : {\rm GL}_1(\mathbb{A}_E) \rightarrow {\rm U}_1(\mathbb{A}_F)
\end{equation*}
be the global reciprocity map such that $\mathfrak{h}_v$ is the map given by Hilbert's theorem 90 at every place $v$ of $F$, as in equation~\eqref{hilbert90}. We also have global twists by the unramified character $\nu$ of a general linear group obtained via the determinant, as in the local theory.

\begin{theorem}\label{strongBC}
Let $\pi$ be a cuspidal globally generic automorphic representation of ${\rm U}_N(\mathbb{A}_F)$. Then $\pi$ has a unique base change to an automorphic representation of ${\rm GL}_N(\mathbb{A}_E)$, denoted by
\begin{equation*}
   \Pi = {\rm BC}(\pi).
\end{equation*}
The central character of $\Pi$ is given by $\omega_\Pi = \omega_\pi \circ \mathfrak{h}$ and is unitary. Furthermore, $\Pi \cong \tilde{\Pi}^\theta$ and there is an expression as an isobaric sum
\begin{equation*}
   \Pi = \Pi_1 \boxplus \cdots \boxplus \Pi_d,
\end{equation*}
where each $\Pi_i$ is a unitary cuspidal automorphic representation of ${\rm GL}_{N_i}(\mathbb{A}_E)$ such that $\tilde{\Pi}_i \cong \Pi_i^\theta$ and $\Pi_i \ncong \Pi_j$, for $i \neq j$. At every place $v$ of $F$, we have that
\begin{equation*}
   \Pi_v = {\rm BC}(\pi_v)
\end{equation*}
is the local base change of Theorem~\ref{localLC} preserving local factors.
\end{theorem}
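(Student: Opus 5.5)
Start from the weak lift $\Pi = \Pi_1 \boxplus \cdots \boxplus \Pi_d$ of Proposition~\ref{weakBC}, where each $\Pi_i$ is a cuspidal automorphic representation of ${\rm GL}_{N_i}(\mathbb{A}_E)$. Proposition~\ref{globallocal} already gives that $\Pi_v$ is the unique local base change of $\pi_v$ at every place $v$, with central character $\omega_{\pi_v} \circ \mathfrak{h}_v$. Theorem~\ref{localLC} then identifies $\Pi_v$ with ${\rm BC}(\pi_v)$ and gives equality of $\gamma$-, $L$- and $\varepsilon$-factors against every $\tau$. The identity $\omega_\Pi = \omega_\pi \circ \mathfrak{h}$ is part~(ii) of Proposition~\ref{globallocal}. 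Since $\omega_\pi$ is a character of the compact group ${\rm U}_1(F)\backslash {\rm U}_1(\mathbb{A}_F)$, it is unitary, and hence so is $\omega_\Pi$. Uniqueness of $\Pi$ follows from strong multiplicity one for isobaric sums, as in the proof of Proposition~\ref{weakBC}.

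Next I would show $\Pi \cong \tilde{\Pi}^\theta$. Locally, Theorem~\ref{localLC} gives $\Pi_v \cong \tilde{\Pi}_v^\theta$ at every inert place. At split places, $\Pi_v = \pi_v \otimes \tilde{\pi}_v$, and by the convention $\pi^\theta = \pi_2 \otimes \pi_1$ this is also isomorphic to $\tilde{\Pi}_v^\theta$. The representation $\tilde{\Pi}^\theta = \tilde{\Pi}_1^\theta \boxplus \cdots \boxplus \tilde{\Pi}_d^\theta$ is again a globally generic isobaric automorphic representation, so strong multiplicity one \cite{JaSh1981} gives $\Pi \cong \tilde{\Pi}^\theta$. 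By uniqueness of isobaric decompositions, $\theta$-contragredience then permutes the multiset $\{\Pi_i\}$.

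The main obstacle is to refine this to $\tilde{\Pi}_i \cong \Pi_i^\theta$ for each $i$, together with $\Pi_i \ncong \Pi_j$ and the unitarity of each $\Pi_i$. I would use the analytic properties of the global $L$-functions. First, write $\Pi_i = \Pi_i^0 \nu^{t_i}$ with $\Pi_i^0$ unitary and $t_i \in \mathbb{R}$. Fix $j$ and consider the twist $\tau = \tilde{\Pi}_j^\theta$, suitably normalized. Theorem~\ref{localLC} at every place gives
\begin{equation*}
L(s,\pi \times \tau) = L(s,\Pi \times \tau) = \prod_i L(s,\Pi_i \times \tau).
\end{equation*}
If $N_j < N$, the pair $(\pi,\tau)$ lies in $\mathcal{LS}({\rm U}_N,{\rm Res}\,{\rm GL}_{N_j},p)$. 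By \cite{Lom19} and the Langlands--Shahidi method, in particular Proposition~5.3 of \cite{LomLS} and its global counterpart, the left side is rational, satisfies a functional equation, and has at most simple poles, located only at $s = 0, 1$. These poles occur exactly when $\tau \cong \tilde{\tau}^\theta$-type conditions hold.

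On the right side, the Rankin--Selberg factor $L(s,\Pi_i \times \tilde{\Pi}_k)$ of Jacquet--Shalika has a pole at $s = 1 + t_k - t_i$ precisely when $\Pi_i^0 \cong \Pi_k^0$. I would then compare the pole at the rightmost exponent, as in the extreme-real-part argument of Proposition~\ref{scTempered}. This should force all $t_i = 0$, so that each $\Pi_i$ is unitary. It should also force each $\Pi_i$ to pair with itself under $\theta$-contragredience, since a pole of $L(s,\Pi_i \times \tilde{\Pi}_i)$ at $s = 1$ can only be matched by $\tilde{\Pi}_i \cong \Pi_i^\theta$. Simplicity of the pole then excludes repetitions $\Pi_i \cong \Pi_j$ for $i \neq j$.

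For the case where some $N_j$ is large, I would instead use a twist $\tau$ of rank at most $N - 1$ together with the partial $L$-function $L^S$ and the global functional equation~(vi). This is an alternative to the converse-theorem set of twists. The point to check is that the Langlands--Shahidi $L$-functions have no poles other than those predicted by the unitary normalization. That verification is where I expect the bulk of the difficulty to lie.
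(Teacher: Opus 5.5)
Your route is essentially the paper's. You take the weak lift, use Proposition~\ref{globallocal} for the local components and for $\omega_\Pi=\omega_\pi\circ\mathfrak{h}$, and then compare poles of the Langlands--Shahidi function $L(s,\pi\times\tilde{\Pi}^0_j)$ with those of $\prod_i L(s+t_i,\Pi^0_i\times\tilde{\Pi}^0_j)$. The most negative exponent would give a pole in $\Re(s)>1$, simplicity of the pole at $s=1$ gives $\Pi_i\ncong\Pi_j$, and the pole criterion gives $\tilde{\Pi}_j\cong\Pi_j^\theta$.

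The one organizational difference is that you obtain $\Pi\cong\tilde{\Pi}^\theta$ first, from the local isomorphisms and strong multiplicity one. The paper instead deduces it at the end from the factorwise statement. Your order makes the multiset $\{t_i\}$ symmetric under $t\mapsto -t$, so a negative exponent exists as soon as not all vanish; the paper uses unitarity of $\omega_\Pi$ for this. It also settles $d=1$ immediately.

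Your last paragraph, however, chases an obstacle that does not exist. Nothing restricts the rank $m$ in $\mathcal{LS}({\rm U}_N,{\rm Res}\,{\rm GL}_m,p)$. The bound $m\leq N-1$ belongs only to the twisting set $\mathcal{T}(S;\eta)$ of the converse theorem, which plays no role at this stage, because Theorem~\ref{localLC} and \S~\ref{splitBC} already give equality of complete local $L$-functions at every place. Besides, $N_j=N$ forces $d=1$.

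The inputs the paper actually uses hold for twists of any rank:
\begin{itemize}
\item[(a)] Theorem~6.5 of \cite{Lom19}: for unitary cuspidal $\tau$, $L(s,\pi\times\tau)$ is holomorphic for $\Re(s)>1$ with at most a simple pole at $s=1$.
\item[(b)] Corollary~4.2 of \cite{Lom19}: $L(s,\pi\times\tilde{\Pi}_j)$ is a Laurent polynomial unless $\tilde{w}_0(\Pi_j\otimes\pi)\cong\Pi_j\otimes\pi$.
\end{itemize}

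Your stated input is stronger than what is needed, and partly inaccurate. You only use the ``only if'' direction of the pole criterion; a conjugate self-dual $\tau$ need not produce a pole. You also never need information about poles in $0<\Re(s)<1$.

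Finally, in the extreme-exponent step, take $t_{j_0}$ minimal. Then every other factor $L(s+t_i,\Pi^0_i\times\tilde{\Pi}^0_{j_0})$ is evaluated at $s=1-t_{j_0}$ with real part at least $1$, where it does not vanish, so it cannot cancel the pole.
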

\begin{proof}
Let $\Pi = {\rm BC}(\pi)$ be the automorphic representation of ${\rm Res}_{E/F}{\rm GL}_N(\mathbb{A}_F) \cong {\rm GL}_N(\mathbb{A}_E)$ obtained from $\pi$ via the weak base change of Proposition~\ref{weakBC}. Let $S$ be a finite set of places of $F$ such that $E/F$ and $\pi$ are unramified outside of $S$. For all $v \notin S$, $\Pi_v$ is the unramified lift of $\pi_v$ of \S~\ref{unramifiedBC} or the split base change of \S~\ref{splitBC}. By Proposition~\ref{globallocal}, we know that $\Pi$ has central character
\begin{equation*}
   \omega_\Pi = \omega_\pi \circ \mathfrak{h},
\end{equation*}
which is unitary, since $\omega_\pi$ is a character of the compact group ${\rm U}_1(F) \backslash {\rm U}_1(\mathbb{A}_F)$.

From the Jacquet-Shalika classification \cite{JaSh1981}, the functorial lift $\Pi$ decomposes as an isobaric sum
\begin{equation*}
   \Pi = \Pi_1 \boxplus \cdots \boxplus \Pi_d.
\end{equation*}
More precisely, each cuspidal automorphic representation $\Pi_i$ of ${\rm GL}_{N_i}(\mathbb{A}_E)$ can be written in the form
\begin{equation*}
   \Pi_i = \Xi_i\nu^{t_i},
\end{equation*}
with $\Xi_i$ a unitary cuspidal automorphic representation of ${\rm GL}_{N_i}(\mathbb{A}_E)$. Reordering if necessary, we can assume $t_1 \leq \cdots \leq t_d$. We wish to prove that each $t_i = 0$. Notice that if $t_i < 0$ for some $i$, then there is a $j > i$ such that $t_j > 0$, due to the fact that $\omega_\Pi$ is unitary. Also, we cannot have $t_1 > 0$. To obtain a contradiction, suppose there exists a $t_{j_0}$ which is the smallest with the property $t_{j_0} < 0$.

Consider $(E/F,\pi,\Xi_{j_0},\psi) \in \mathcal{LS}({\rm U}_N,{\rm Res}\,{\rm GL}_{N_{j_0}},p)$. Theorem~6.5 of \cite{Lom19} tells us that $L(s,\pi \times \tilde{\Xi}_{j_0})$ is holomorphic for $\Re(s) > 1$. However, if we consider $(E/F,\Pi,\Xi_{j_0},\psi) \in \mathcal{LS}({\rm Res}\,{\rm GL}_N,{\rm Res}\,{\rm GL}_{N_{j_0}},p)$, we have that
\begin{equation*}
   L(s,\Pi \times \tilde{\Xi}_{j_0}) = \prod_{i=1}^d L(s+t_i,\Xi_i \times \tilde{\Xi}_{j_0}).
\end{equation*}
And, by Proposition~3.6 of \cite{JaSh1981} part II, the $L$-function
\begin{equation*}
   L(s,\Xi_{j_0} \times \tilde{\Xi}_{j_0})
\end{equation*}
has a simple pole at $s = 1$. This carries over to a pole at $s = 1 - t_{j_0} > 1$ for
\begin{equation*}
   L(s,\pi \times \tilde{\Xi}_{j_0}) = L(s,\Pi \times \tilde{\Xi}_{j_0}).
\end{equation*}
This causes a contradiction unless there exists no such $t_{j_0}$. Hence, all $t_i$ must be zero. This proves that each $\Pi_i$ is a unitary cuspidal automorphic representation.

We now have that each $\Pi_i = \Xi_i$ and
\begin{equation*}
   L(s,\Pi \times \tilde{\Pi}_{i_0}) = \prod_{i=1}^d L(s,\Pi_i \times \tilde{\Pi}_{i_0}),
\end{equation*}
where $i_0$ is a fixed index ranging from $1 \leq i_0 \leq d$. On the right hand side we have a pole at $s = 1$ every time that $\Pi_{i_0} \cong \Pi_i$, by Proposition~3.6 of \cite{JaSh1981} part II. However, on the left hand side, from Theorem~6.5 of \cite{Lom19}, we have that $L(s,\Pi \times \tilde{\Pi}_{i_0}) = L(s,\pi \times \tilde{\Pi}_{i_0})$ has only a simple pole at $s = 1$. Hence, $\Pi_{i_0} \cong \Pi_i$ can only occur if $i_0 = i$.

It remains to show that, for each $j$, we have $\Pi_j \cong \tilde{\Pi}_j^\theta$. For this, let ${\rm T}_j = \Pi_j \otimes \pi$. From Corollary~4.2 of \loccit{Lom19}, $L(s,\pi \times \tilde{\Pi}_j)$ is a Laurent polynomial if $\tilde{w}_0({\rm T}_j) \ncong {\rm T}_j$. If this were the case, it would have no poles. However
\begin{equation*}
   L(s,\pi \times \tilde{\Pi}_j) = \prod_{i=1}^d L(s,\Pi_i \times \tilde{\Pi}_j)
\end{equation*}
has only a simple pole at $s = 1$, reasoning as before. Thus, it must be the case that $\tilde{w}_0({\rm T}_j) \cong {\rm T}_j$, giving the desired $\Pi_j \cong \tilde{\Pi}_j^\theta$.

The compatibility of global to local base change is addressed by Proposition~\ref{globallocal}~(i). Indeed, we have that
\begin{equation*}
   \Pi_v = {\rm BC}(\pi_v)
\end{equation*}
is unique and must be given by Theorem~\ref{localLC} at every place. Moreover, since each $\Pi_i$ is a unitary cuspidal automorphic representation, the induced representation ${\rm Ind}(\Pi_{1,v} \otimes \cdots \otimes \Pi_{d,v})$ is irreducible by a corollary of Bernstein \cite{Be1984}; hence, it is isomorphic to $\Pi_v$, which is thus unitary. Since $\Pi_j \cong \tilde{\Pi}_j^\theta$ for each $j$, we also have $\Pi \cong \tilde{\Pi}^\theta$ globally.
\end{proof}

In the case of number fields, the method of descent is used in \cite{So2005} to show how to obtain the strong lift from the weak lift. Over function fields, we have given a self contained proof with the results of this article combined with those of \cite{Lom19}. Furthermore, we note that Theorem~6.5 of \loccit{Lom19}, which we use in the proof of Theorem~\ref{strongBC}, is somewhat simpler to establish for the quasi-split unitary groups than for the split classical groups, thanks to the presence of split places. At a place $v$ of $F$ that splits in $E$, the local component $\pi_v$ is a representation of ${\rm GL}_N(F_v)$, and unitary representations of general linear groups have a simpler description.

\section{Ramanujan Conjecture and Riemann Hypothesis}\label{sec:RR}

We complete the proof of existence for extended $\gamma$-factors, $L$-functions and root numbers in order to include products of two unitary groups for generic representations. In \S~\ref{nongeneric}, assuming the tempered $L$-packet conjecture, we obtain local factors on all of $\mathfrak{ls}(p)$. Our main results on global $L$-functions are contained in Theorem~\ref{mainthm}, which is proved in \S~\ref{proofmain}. We conclude with Theorem~\ref{ramanujan}, which establishes the Ramanujan Conjecture.

\begin{theorem}\label{mainthm}
There exist rules $\gamma$, $L$ and $\varepsilon$ for generic $(l/k,\pi,\tau,\psi) \in \mathfrak{ls}(p)$, satisfying the ten axioms of \S~\ref{axioms}. Given $(E/F,\pi,\tau,\psi) \in \mathcal{LS}(p)$, define
\begin{equation*}
   L(s,\pi \times \tau) = \prod_v L(s,\pi_v \times \tau_v) \quad \text{and} \quad \varepsilon(s,\pi \times \tau,\psi) = \prod_v \varepsilon(s,\pi_v \times \tau_v,\psi_v).
\end{equation*}
Automorphic $L$-functions on $\mathcal{LS}(p)$ satisfy the following:
\begin{itemize}
   \item[(i)] {\rm (Rationality).} $L(s,\pi \times \tau)$ converges absolutely on a right half plane and has a meromorphic continuation to a rational function in $q^{-s}$.
   \item[(ii)] {\rm (Functional equation).} $L(s,\pi \times \tau) = \varepsilon(s,\pi \times \tau) L(1-s,\tilde{\pi} \times \tilde{\tau})$.
   \item[(iii)] {\rm (Riemann Hypothesis).} The zeros of $L(s,\pi \times \tau)$ are contained in the line $\Re(s) = 1/2$.
\end{itemize}
\end{theorem}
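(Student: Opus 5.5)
The plan is to reduce everything to Rankin--Selberg factors for general linear groups by means of base change, and then invoke L.~Lafforgue. For the local existence on all generic $(l/k,\pi,\tau,\psi)\in\mathfrak{ls}(p)$, I would use Theorem~\ref{localLC} (and Definition~\ref{splitBCdef} at split algebras) to attach to each generic $\pi$ of ${\rm U}_M(k)$ its base change $\Pi={\rm BC}(\pi)$, with ${\rm BC}(\tau)=\tau$ when ${\bf G}_2={\rm Res}\,{\rm GL}_N$. I would then define
\begin{equation*}
   \gamma(s,\pi\times\tau,\psi_l)=\gamma(s,{\rm BC}(\pi)\times{\rm BC}(\tau),\psi_l),
\end{equation*}
and do the same for $L$ and $\varepsilon$. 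In the case ${\rm U}_M\times{\rm Res}\,{\rm GL}_N$ this agrees with the LS factors of \cite{LomLS} by Theorem~\ref{localLC}, and at split places it agrees with \S~\ref{separable}, since ${\rm BC}(\pi)\times{\rm BC}(\tau)$ there gives exactly the listed products.

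Next I would check the ten axioms one by one. Naturality and isomorphism invariance are immediate. Compatibility with class field theory follows because on ${\rm U}_1$ the lift is the Hilbert~90 extension, by the central character statement of Theorem~\ref{localLC}. Multiplicativity and the Langlands classification property follow from the explicit inducing data of ${\rm BC}$ in Theorem~\ref{localLC} combined with the corresponding Rankin--Selberg properties \cite{HeLo2013a}; here the $\tilde\pi^\theta$ entries of $\mathfrak{A}_i$ match the $\tilde\tau_i'^\theta$ blocks of the lift. For the tempered axioms~(vii) and~(viii), I would use Theorem~\ref{temperedtempered}: tempered representations lift to tempered ones, so the Rankin--Selberg tempered properties transfer. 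Dependence on $\psi$ follows from the central character formula. For the global functional equation~(vi) with two unitary groups, I would apply Theorem~\ref{strongBC}. Since $\Pi={\rm BC}(\pi)$ and $T={\rm BC}(\tau)$ are automorphic isobaric sums that agree with the local lifts at every place, $L(s,\pi\times\tau)=L(s,\Pi\times T)$ factors as $\prod_{i,j}L(s,\Pi_i\times T_j)$ with $\Pi_i,T_j$ unitary cuspidal, and the same holds for $\varepsilon$. Then~(ii) and~(xii) follow from the Rankin--Selberg global functional equation for each pair.

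For rationality~(i), each $L(s,\Pi_i\times T_j)$ is rational in $q^{-s}$ and absolutely convergent for $\Re(s)>1$ by Jacquet--Shalika and the function field Rankin--Selberg theory. Poles occur only when $T_j\cong\tilde\Pi_i\nu^{it}$, so the product is rational. For the Riemann Hypothesis~(iii), I would use L.~Lafforgue \cite{La2002}: each unitary cuspidal $\Pi_i$ corresponds to an irreducible $\ell$-adic local system that is pure of weight~$0$. Then $\Pi_i\times T_j$ corresponds to a tensor product that is pure of weight~$0$, and $L(s,\Pi_i\times T_j)$ is an Artin $L$-function of a pure sheaf on a curve, including the correct local factors at ramified places via the local Langlands correspondence \cite{LRS1993}. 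By Deligne's Weil~II, its zeros lie on $\Re(s)=1/2$, and the same then holds for the product.

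The main obstacle I expect is this last step at the finitely many bad places. One has to make sure that the completed $L$-function, rather than the partial one, is exactly the $H^1$ cohomological $L$-function. The extra local factors contribute only poles, not zeros, but they could cancel zeros of $H^1$. I would handle this by writing $L(s,\Pi_i\times T_j)$ as $\det(1-q^{-s}{\rm Frob}\mid H^1_c)/\bigl(\det(1-q^{-s}{\rm Frob}\mid H^0_c)\,\det(1-q^{-s}{\rm Frob}\mid H^2_c)\bigr)$ for the middle extension $j_*$ of the tensor product sheaf, using compatibility of local factors with $j_*$ invariants. Purity of $H^1$ of $j_*$ of a pure lisse sheaf (Weil~II) then puts the zeros on $\Re(s)=1/2$, while $H^0$ and $H^2$ give poles only on $\Re(s)=0,1$, so no cancellation occurs.
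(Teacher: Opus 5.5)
Your proposal is correct and follows the paper's route: define the ${\rm U}_M\times{\rm U}_N$ factors through the local lifts of Theorem~\ref{localLC}, verify the axioms from the explicit shape of ${\rm BC}$ together with the Rankin--Selberg properties, and use Theorem~\ref{strongBC} to factor the global $L$-function as $\prod_{i,j}L(s,\Pi_i\times T_j)$ before invoking Lafforgue. The differences are minor. You also treat the ${\rm U}_N\times{\rm Res}\,{\rm GL}_m$ case through strong base change, whereas the paper cites \cite{Lom19} and \cite{LomLS} there and deduces the Riemann Hypothesis from Theorem~5.7 of \cite{LomLS} together with Theorem~\ref{ramanujan}; and you spell out the Weil~II middle-extension argument that the paper simply takes from \cite{La2002}.
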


\subsection{Extended local factors} Let us complete the definition of extended local factors for generic representations. The case of a unitary group and a general linear group for generic representations is included in Theorem~5.1 of \cite{LomLS}. An exposition, within the Langlands-Shahidi method, of the case of two general linear groups can be found in \cite{HeLo2013a}. We thus assume ${\bf G}_1 = {\rm U}_M$ and ${\bf G}_2 = {\rm U}_N$.

\begin{definition}
Given $(l/k,\pi_0,\tau_0,\psi) \in \mathfrak{ls}({\rm U}_M,{\rm U}_N,p)$ generic, let
\begin{equation*}
   \Pi_0 = {\rm BC}(\pi_0) \text{ and } T_0 = {\rm BC}(\tau_0)
\end{equation*}
be the corresponding base change lifts of Theorem~\ref{localLC}. We define
\begin{align*}
   \gamma(s,\pi_0 \times \tau_0,\psi_l) &:= \gamma(s,\Pi_0 \times T_0,\psi_l) \\
   L(s,\pi_0 \times \tau_0) &:= L(s,\Pi_0 \times T_0) \\
   \varepsilon(s,\pi_0 \times \tau_0,\psi_l) &:= \varepsilon(s,\Pi_0 \times T_0,\psi_l).
\end{align*}
\end{definition}

The defining Properties (vii)--(x) of \S~\ref{axioms} allow us to construct $L$-functions and root numbers from $\gamma$-factors in the tempered case. This is compatible with the decomposition of $\pi_0$ and $\tau_0$ of Theorem~\ref{localLC}. It is then not hard to verify that the rules $\gamma$, $L$ and $\varepsilon$ satisfy all of the properties of \S~\ref{axioms}. For instance, multiplicativity, Property~(iv), and Properties~(vii)--(x) follow from Theorem~\ref{localLC} and the corresponding properties of Rankin-Selberg factors \cite{JPSS1983, HeLo2013a}, while the global functional equation, Property~(vi), follows from Theorem~\ref{strongBC} and the functional equation for Rankin-Selberg products.

\subsection{Non-generic representations and local factors}\label{nongeneric} Consider an irreducible admissible representation $\pi$ of ${\rm U}_N(k)$ that is not necessarily generic. We first look at the case when $\pi$ is tempered. If ${\rm char}(k) = 0$, the tempered $L$-packet conjecture, Conjecture~\ref{temperedconj} below, is known (see Theorem~2.5.1 of \cite{Mo2015}). Furthermore, it is part of the work of Ganapathy-Varma \cite{GaVa2017} on the local Langlands correspondence for the split classical groups if ${\rm char}(k) = p$. For the unitary groups ${\rm U}_N(k)$, ${\rm char}(k) = p$, it is thus reasonable to work under the assumption that Conjecture~\ref{temperedconj} holds.

Let $\Phi$ be the set of all Langlands parameters
\begin{equation*}
   \phi : \mathcal{W}_k' \rightarrow {}^L{\rm U}_N.
\end{equation*}
A parameter $\phi$ is tempered if its image on ${\rm GL}_N(\mathbb{C})$ is bounded. For any tempered $L$-parameter $\phi$, there is an $L$-packet $\Pi_\phi$ which is a finite multi-set. We consider only tempered $L$-packets in this section, which agree with tempered Arthur packets.

\begin{conjecture}\label{temperedconj}
If an $L$-packet contains a tempered element, then all of its elements are tempered. Every tempered $L$-packet $\Pi_\phi$ of ${\rm U}_N(k)$ contains a representation $\pi_0$ which is generic.
\end{conjecture}

In fact, one expects more precisely the following. Write ${\bf G} = {\rm U}_N$ and fix a Borel subgroup ${\bf B} = {\bf T}{\bf U}$. Consider a Whittaker datum $(B,\psi_U)$ with $\psi_U : U \rightarrow \mathbb{C}^\times$ non-degenerate. Then every tempered $L$-packet $\Pi_\phi$ of $G$ should contain exactly one representation $\pi_0$ which is generic with respect to $(B,\psi_U)$. In this article, we take the Borel subgroup of ${\rm U}_N$ consisting of upper triangular matrices.

Furthermore, $L$-functions are independent of how the non-degenerate character varies. Let $\widetilde{\bf G}$ be as in Proposition~2.2 of \cite{LomLS}, sharing the same derived group as ${\bf G}$. Then $L$-functions are invariant under conjugation by elements of $\widetilde{\bf G}(k)$. Furthermore, there is a formula that keeps track of how $\gamma$-factors and root numbers behave as the additive character $\psi$ varies.

The importance of the tempered $L$-packet conjecture is that it reduces the study of local $\gamma$-factors, $L$-functions and $\varepsilon$-factors to the case of generic representations. Hence, under the assumption that Conjecture~\ref{temperedconj} is valid, we first extend the existence part of Theorem~\ref{mainthm} to all tempered $(l/k,\pi,\tau,\psi) \in \mathfrak{ls}(p)$.

\begin{definition}
Let $(l/k,\pi,\tau,\psi) \in \mathfrak{ls}(p)$ be tempered. Let $\Pi_{\phi_1}$ and $\Pi_{\phi_2}$ be tempered $L$-packets with $\pi \in \Pi_{\phi_1}$ and $\tau \in \Pi_{\phi_2}$. Let $\pi_0 \in \Pi_{\phi_1}$ and $\tau_0 \in \Pi_{\phi_2}$ be generic. We define
\begin{align*}
   \gamma(s,\pi \times \tau,\psi_l) &:= \gamma(s,\pi_0 \times \tau_0,\psi_l) \\
   L(s,\pi \times \tau) &:= L(s,\pi_0 \times \tau_0) \\
   \varepsilon(s,\pi \times \tau,\psi_l) &:= \varepsilon(s,\pi_0 \times \tau_0,\psi_l).
\end{align*}
\end{definition}

To have local factors for all admissible $(l/k,\pi,\tau,\psi) \in \mathfrak{ls}(p)$, we can use the Langlands classification to write
\begin{equation*}
   \pi \hookrightarrow {\rm Ind}(\sigma_1 \otimes \chi_1) \quad \text{and} \quad \tau \hookrightarrow {\rm Ind}(\sigma_2 \otimes \chi_2),
\end{equation*}
with $\sigma_i$ a tempered representation of $M_i = {\bf M}_i(k)$ and $\chi_i \in X_{\rm nr}({\bf M}_i)$, the group of unramified characters of $M_i$, in the negative Weyl chamber, as in \cite{BoWa2000}. With tempered $L$-functions and corresponding local factors defined, Properties (vii)--(x) of \S~\ref{axioms} can now be used to define $L$-functions and related local factors on $\mathfrak{ls}({\bf G}_1,{\bf G}_2,p)$ in general.

We refer to \S\S~7 and 8 of \cite{GaLo2018} for a further discussion on $L$-parameters and the local Langlands correspondence for the classical groups, including the unitary groups. Written under certain working hypotheses, we address the local Langlands correspondence in characteristic $p$. First for supercuspidal representations, then for discrete series and tempered $L$-parameters, to end with general admissible representations.

For any $\pi$ of ${\rm U}_N(k)$, Genestier and V.~Lafforgue \cite{GeLa} attach a semisimple $L$-parameter $\sigma_\pi^{ss}$. For generic $\pi$, the Langlands parameter of ${\rm BC}(\pi)$ provided by Theorem~\ref{localLC} and the local Langlands correspondence for ${\rm GL}_N$ is expected to have semisimplification ${\rm BC} \circ \sigma_\pi^{ss}$.

\subsection{Proof of Theorem~\ref{mainthm}}\label{proofmain} The case of two general linear groups, i.e., for $(E/F,\pi,\tau,\psi) \in \mathcal{LS}({\rm Res}_{E/F}{\rm GL}_M,{\rm Res}_{E/F}{\rm GL}_N,p)$, is already well understood. Properties (i) and (ii) of Theorem~\ref{mainthm} are attributed to Piatetski-Shapiro in this case \cite{PS1976}. They can be proved in a self contained way via the Langlands-Shahidi method over function fields, see \cite{HeLo2013a, Lo2016}. And the Riemann Hypothesis for general linear groups was proved by Laurent Lafforgue in \cite{La2002}.

For the case of $(E/F,\pi,\tau,\psi) \in \mathcal{LS}({\rm U}_N,{\rm Res}_{E/F}{\rm GL}_m,p)$: rationality can be found in \cite{Lom19}; Theorem~\ref{mainthm}(ii) is included in Theorem~5.1 of \cite{LomLS}; and, the Riemann Hypothesis follows from Theorem~5.7 of \loccit{LomLS}, since we establish the Ramanujan Conjecture for ${\rm U}_N$ in the next subsection.

For the case of $(E/F,\pi,\tau,\psi) \in \mathcal{LS}({\rm U}_M,{\rm U}_N,p)$, let
\begin{equation*}
   {\rm BC}(\pi) = \Pi = \Pi_1 \boxplus \cdots \boxplus \Pi_d \quad \text{and} \quad {\rm BC}(\tau) = T = T_1 \boxplus \cdots \boxplus T_e
\end{equation*}
be the base change lifts of Theorem~\ref{strongBC}. Then
\begin{equation*}
   L(s,\pi \times \tau) = L(s,\Pi \times T) = \prod_{i,j} L(s,\Pi_i \times T_j).
\end{equation*}
For each $(E/F,\Pi_i,T_j,\psi) \in \mathcal{LS}({\rm Res}_{E/F}{\rm GL}_{m_i},{\rm Res}_{E/F}{\rm GL}_{n_j},p)$, $1 \leq i \leq d$, $1 \leq j \leq e$, we have rationality, the functional equation
\begin{equation*}
   L(s,\Pi_i \times T_j) = \varepsilon(s,\Pi_i \times T_j) L(1-s,\tilde{\Pi}_i \times \tilde{T}_j),
\end{equation*}
and the Riemann Hypothesis. Hence the $L$-function $L(s,\pi \times \tau)$ also satisfies Properties (i)--(iii) of Theorem~\ref{mainthm}. \qed

\subsection{The Ramanujan Conjecture} Base change over function fields also allows us to transport the Ramanujan Conjecture from the unitary groups to ${\rm GL}_N$. The Ramanujan Conjecture for cuspidal representations of general linear groups is a theorem of L.~Lafforgue \cite{La2002}. For globally generic representations of the split classical groups, see \cite{Lo2009, Lom19}; for the split exceptional groups, see the recent preprint \cite{OpenAI2026}.

\begin{theorem}\label{ramanujan}
Let $\pi = \otimes' \pi_v$ be a globally generic cuspidal automorphic representation of ${\rm U}_N(\mathbb{A}_F)$. Then every $\pi_v$ is tempered. Whenever $\pi_v$ is unramified, its Satake parameters have absolute value $1$.
\end{theorem}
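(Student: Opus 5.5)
The plan is to transport the problem to ${\rm GL}_N(\mathbb{A}_E)$ via Theorem~\ref{strongBC} and then use L.~Lafforgue's Ramanujan theorem \cite{La2002}. Let $\Pi = {\rm BC}(\pi) = \Pi_1 \boxplus \cdots \boxplus \Pi_d$. By Theorem~\ref{strongBC}, each $\Pi_i$ is a unitary cuspidal automorphic representation of ${\rm GL}_{N_i}(\mathbb{A}_E)$. Lafforgue's theorem then shows that every local component $\Pi_{i,w}$, at every place $w$ of $E$, is tempered. By Theorem~\ref{strongBC}, for every place $v$ of $F$ we have
\[
\Pi_v = {\rm BC}(\pi_v) \cong {\rm Ind}(\Pi_{1,v} \otimes \cdots \otimes \Pi_{d,v}),
\]
which is irreducible. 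Since it is induced from tempered representations, $\Pi_v$ is tempered.

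The unramified statement is then immediate. At an inert place where $\pi_v$ is unramified, \S~\ref{unramifiedBC} shows that $\Pi_v$ has Satake parameter ${\rm diag}(\alpha_{1,v},\ldots,\alpha_{n,v},(1),\alpha_{n,v}^{-1},\ldots,\alpha_{1,v}^{-1})$. A tempered unramified generic representation of ${\rm GL}_N(E_v)$ has all Satake parameters of absolute value $1$, so $|\alpha_{i,v}|=1$. At a split place, $\Pi_v = \pi_v \otimes \tilde{\pi}_v$ with $\pi_v$ a representation of ${\rm GL}_N(F_v)$, and temperedness of $\Pi_v$ gives temperedness of $\pi_v$ directly.

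The remaining step is to deduce temperedness of $\pi_v$ from temperedness of $\Pi_v$ at every place, including the ramified inert ones. I would argue by contraposition. Write $\pi_v$ as in Theorem~\ref{localLC}, as the generic constituent of ${\rm Ind}(\tau_b' \otimes \cdots \otimes \tau_1' \otimes \tau_0)$ with $\tau_i' = \tau_{i,0}\nu^{r_i}$ and $0 < r_1 < \cdots < r_b$ in the Langlands-quotient form. Theorem~\ref{localLC} identifies $\Pi_v$ as the generic constituent of
\[
{\rm Ind}(\tau_b' \otimes \cdots \otimes T_0 \otimes \cdots \otimes \tilde{\tau}_b'^{\theta}),
\]
where $T_0$ is tempered by Theorem~\ref{temperedtempered}. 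Suppose $b \geq 1$, so that $\pi_v$ is not tempered. Then the exponents $\pm r_i \neq 0$ appear in the cuspidal support of $\Pi_v$ in a non-cancelling way. Uniqueness of the generic representation with given supercuspidal support (Zelevinsky \cite{Ze1980}), combined with the Langlands classification for ${\rm GL}_N$, then shows that $\Pi_v$ is the non-tempered representation whose Langlands data has exponents $r_b > \cdots > r_1 > 0 > -r_1 > \cdots$. This contradicts temperedness, so $b = 0$ and $\pi_v = \tau_0$ is tempered.

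I expect this last step to be the main obstacle: one must check that the generic constituent of the standard module for ${\rm GL}_N$ is genuinely non-tempered. The tool I would try first is a comparison of $L$-functions. Using Lemma~\ref{localLfactors} and Properties~(ix)--(x), $L(s,\pi_v \times \tilde{\tau}_{b,0}) = L(s,\Pi_v \times \tilde{\tau}_{b,0})$ has a pole at $s = -r_b$, coming from the factor $L(s+r_b,\tau_{b,0}\times\tilde{\tau}_{b,0})$. On the other hand, if $\Pi_v$ is tempered, then $L(s,\Pi_v \times \tilde{\tau}_{b,0})$ is holomorphic for $\Re(s) > 0$ and its poles lie on $\Re(s)=0$ by \cite{JPSS1983}. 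A pole at $\Re(s) = -r_b$ would therefore have to be compared with these, which requires checking the right half-plane carefully. It is cleaner to use $\tilde{\tau}_b'^{\theta}$: the factor $L(s - r_b, \cdot)$ contributes a pole at $\Re(s) = r_b > 0$. This contradicts tempered holomorphy for $\Re(s) > 0$, and poles cannot cancel because local $L$-functions have no zeros.
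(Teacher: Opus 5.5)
Your proposal is correct and is essentially the paper's proof. Lafforgue's Th\'eor\`eme~VI.10, together with Bernstein's irreducibility of ${\rm Ind}(\Pi_{1,v}\otimes\cdots\otimes\Pi_{d,v})$, makes every $\Pi_v$ tempered. A non-tempered $\pi_v$ is then excluded by the pole at $s=r_b>0$ of $L(s-r_b,\tilde{\tau}_{b,0}^{\theta}\times\tau_{b,0}^{\theta})$ inside $L(s,\pi_v\times\tau_{b,0}^{\theta})=L(s,\Pi_v\times\tau_{b,0}^{\theta})$, which contradicts the holomorphy of $\prod_i L(s,\Pi_{i,v}\times\tau_{b,0}^{\theta})$ for $\Re(s)>0$. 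The twist to use is $\tau_{b,0}^{\theta}$, as in the paper. Split and unramified places are handled as you describe.

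You were right to drop the cuspidal-support argument you sketched first. On its own it would not work, because tempered representations such as Steinberg have non-unitary cuspidal support.
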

\begin{proof}
Fix a place $v$ of $F$, which remains inert in $E$. We can write $\pi_v$ as the generic constituent of
\begin{equation*}
   {\rm Ind}\left( \tau_{b,v}' \otimes \cdots \otimes \tau_{1,v}' \otimes \tau_{0,v} \right),
\end{equation*}
as in \eqref{langlandsquotient}, with each $\tau_{i,v}'$ quasi-tempered and $\tau_{0,v}$ tempered. Furthermore, we can write $\tau_{i,v}' = \tau_{i,v}\nu^{r_{i,v}}$ with $\tau_{i,v}$ tempered and Langlands parameters
\[ 0 \leq r_{1,v} \leq \cdots \leq r_{b,v}. \]

Now, let $\Pi = {\rm BC}(\pi)$ be the base change lift of Theorem~\ref{strongBC}, so that $\Pi_v$ is the local Langlands functorial lift of $\pi_v$. By Theorem~\ref{localLC}, $\Pi_v$ is the generic constituent of
\begin{equation}\label{Piv}
   {\rm Ind}\left( \tau_{b,v}' \otimes \cdots \otimes \tau_{1,v}' \otimes T_{0,v} \otimes \tilde{\tau}_{1,v}'^\theta \otimes \cdots \otimes \tilde{\tau}_{b,v}'^\theta \right),
\end{equation}
with $T_{0,v}$ the Langlands functorial lift of the tempered representation $\tau_{0,v}$. The representation $T_{0,v}$ is also tempered by Proposition~\ref{temperedBC}.

On the other hand, the base change lift can be expressed as an isobaric sum
\begin{equation*}
   \Pi = \Pi_1 \boxplus \cdots \boxplus \Pi_e,
\end{equation*}
with each $\Pi_i$ a cuspidal unitary automorphic representation of ${\rm GL}_{m_i}(\mathbb{A}_E)$. Hence, by Bernstein \cite{Be1984}, $\Pi_v$ is isomorphic to the irreducible representation
\begin{equation}\label{Pivisobaric}
   {\rm Ind}(\Pi_{1,v} \otimes \cdots \otimes \Pi_{e,v}).
\end{equation}
Then, thanks to Th\'eor\`eme~VI.10 of \cite{La2002}, each $\Pi_{i,v}$ is tempered.

We then look at $\tau_{b,v}^\theta$, where $r_{b,v}$ is the largest of the exponents in \eqref{Piv}, and suppose that $r_{b,v} > 0$. We now have
\begin{align*}
   L(s,\pi_v \times \tau_{b,v}^\theta) &= L(s,\Pi_v \times \tau_{b,v}^\theta) \\
   &= L(s,T_{0,v} \times \tau_{b,v}^\theta) \prod_{i=1}^b L(s+r_{i,v},\tau_{i,v} \times \tau_{b,v}^\theta) L(s-r_{i,v},\tilde{\tau}_{i,v}^\theta \times \tau_{b,v}^\theta).
\end{align*}
The $L$-function $L(s-r_{b,v},\tilde{\tau}_{b,v}^\theta \times \tau_{b,v}^\theta)$ appearing on the right hand side has a pole at $s = r_{b,v}$. However, from \eqref{Pivisobaric} we have
\begin{equation*}
   L(s,\Pi_v \times \tau_{b,v}^\theta) = \prod_{i=1}^e L(s,\Pi_{i,v} \times \tau_{b,v}^\theta).
\end{equation*}
Notice that each representation involved in the product on the right hand side is tempered. Then each $L(s,\Pi_{i,v} \times \tau_{b,v}^\theta)$ is holomorphic for $\Re(s) > 0$. Hence, so is $L(s,\Pi_v \times \tau_{b,v}^\theta)$. This contradiction shows that $r_{b,v} = 0$. Thus $\pi_v$ is the generic constituent of a representation induced from tempered data, and it is therefore tempered.

If $v$ is split, we have that ${\rm BC}(\pi_v) = \pi_v \otimes \tilde{\pi}_v$ from \S~\ref{splitBC}. For the place $w_1$ of $E$ lying above $v$ corresponding to the first factor, the representation of ${\rm GL}_N(F_v)$
\begin{equation*}
   \pi_v = {\rm Ind}(\Pi_{1,w_1} \otimes \cdots \otimes \Pi_{e,w_1})
\end{equation*}
is tempered. In particular, if $\pi_v$ is unramified, its Satake parameters have absolute value $1$.

Now, if $v$ is inert and $\pi_v$ is unramified, we have from \S~\ref{unramifiedBC} that the unramified base change ${\rm BC}(\pi_v) = \Pi_v$ corresponds to a semisimple conjugacy class given by
\begin{equation*}
   \Phi_v({\rm Frob}_v) = \left\{ \begin{array}{ll} {\rm diag}(\alpha_{1,v}, \ldots, \alpha_{n,v}, 1, \alpha_{n,v}^{-1}, \ldots, \alpha_{1,v}^{-1}) & \text{if } N = 2n+1 \\ {\rm diag}(\alpha_{1,v}, \ldots, \alpha_{n,v}, \alpha_{n,v}^{-1}, \ldots, \alpha_{1,v}^{-1}) & \text{if } N = 2n \end{array} \right..
\end{equation*}
Each $\alpha_{j,v}$ or $\alpha_{j,v}^{-1}$ is a Satake parameter for one of the representations $\Pi_{i,v}$, which are unramified. Since we are in the case of ${\rm GL}_{m_i}(E_v)$, we conclude that
\begin{equation*}
   \left| \alpha_{j,v} \right| = 1.
\end{equation*}
\end{proof}

\end{document}